\newtheoremstyle{dotless}{}{}{\itshape}{}{\bfseries}{}{}{}
\theoremstyle{dotless}
\theoremstyle{plain}
\newtheorem{thm}{Theorem}[section]
\newtheorem{lem}[thm]{Lemma}
\newtheorem{prop}[thm]{Proposition}
\newtheorem{cor}[thm]{Corollary}
\theoremstyle{definition}
\newtheorem{defn}[thm]{Definition}
\newtheorem{rem}[thm]{Remark}
\newtheorem{exa}[thm]{Example}
\providecommand{\differential}{\mathrm{d}}
\renewcommand{\d}{\differential}
\newcommand{\N} {\mathbb{N}}
\newcommand{\R} {\mathbb{R}}
\begin{document}

\title[Approximation property]{The approximation property for weighted spaces of differentiable functions}
\author[K.~Kruse]{Karsten Kruse}
\address{TU Hamburg \\ Institut f\"ur Mathematik \\
Am Schwarzenberg-Campus~3 \\
Geb\"aude E \\
21073 Hamburg \\
Germany}
\email{karsten.kruse@tuhh.de}

\subjclass[2010]{Primary 46E40, 46A32 Secondary 46E10}

\keywords{approximation property, tensor product, differentiable, weight, vector-valued}

\date{\today}
\begin{abstract}
 We study spaces $\mathcal{CV}^{k}(\Omega,E)$ of $k$-times continuously partially differentiable functions 
 on an open set $\Omega\subset\mathbb{R}^{d}$ with values in a locally convex Hausdorff space $E$.
 The space $\mathcal{CV}^{k}(\Omega,E)$ is given a weighted topology generated by a family of weights $\mathcal{V}^{k}$. 
 For the space $\mathcal{CV}^{k}(\Omega,E)$ and its subspace $\mathcal{CV}^{k}_{0}(\Omega,E)$ of functions that 
 vanish at infinity in the weighted topology we try to answer the question whether their elements can be approximated 
 by functions with values in a finite dimensional subspace. We derive sufficient conditions for an affirmative answer to 
 this question using the theory of tensor products.
\end{abstract}
\maketitle
\section{Introduction}
This paper is dedicated to the following problem. Which vector-valued $k$-times continuously partially differentiable 
functions can be approximated in a weighted topology by functions with values in a finite dimensional subspace? 
The answer to this question is closely related to the theory of tensor products and the so-called approximation property. 
A locally convex Hausdorff space $X$ is said to have (Schwartz') approximation property 
if the identity $I_{X}$ on $X$ is contained in the closure of $\mathfrak{F}(X)$ in $L_{\kappa}(X)$ 
where $L_{\kappa}(X)$ denotes the space of continuous linear operators from $X$ to $X$ 
equipped with the topology of uniform convergence on the absolutely convex compact subsets of $X$
and $\mathfrak{F}(X)$ its subspace of operators with finite rank.

The case $k=0$ is well-studied. In \cite{B3}, \cite{B1} and \cite{B2} Bierstedt considered the space $\mathcal{CV}(\Omega,E)$ 
of all continuous functions $f\colon\Omega\to E$ from a completely regular Hausdorff space $\Omega$ to 
a locally convex Hausdorff space $(E,(p_{\alpha})_{\alpha\in\mathfrak{A}})$ over a field $\mathbb{K}$ with a topology induced by a 
Nachbin-family $\mathcal{V}:=(\nu_{j})_{j\in J}$ of weights, i.e.\ 
the space 
\[
  \mathcal{CV}(\Omega,E):=\{f\in\mathcal{C}(\Omega,E)\;|\;\forall\;j\in J,\,\alpha\in\mathfrak{A}:
  |f|_{j,\alpha}<\infty\}
\]
 where $\mathcal{C}(\Omega,E):=\mathcal{C}^{0}(\Omega,E)$ is the space of continuous functions from $\Omega$ to $E$ and
\[
  |f|_{j,\alpha}:=\sup_{x\in\Omega}p_{\alpha}(f(x))\nu_{j}(x). 
\]
Recall that a family $\mathcal{V}:=(\nu_{j})_{j\in J}$ of non-negative functions $\nu_{j}\colon\Omega\to[0,\infty)$ 
is called a Nachbin-family of weights if the functions $\nu_{j}$ are upper semi-continuous and the family is directed, 
i.e.\ for every $j,i\in J$ there are $k\in J$ and $C>0$ such that $\max(\nu_{i},\nu_{j})\leq C\nu_{k}$. 
The notion $\mathcal{U}\leq\mathcal{V}$ for two Nachbin-families means that for every $\mu\in\mathcal{U}$ there 
is $\nu\in\mathcal{V}$ such that $\mu\leq \nu$.

From the perspective of our problem the space $\mathcal{CV}(\Omega,E)$ has an interesting topological subspace, namely, 
the space $\mathcal{CV}_{0}(\Omega,E)$ consisting of the functions that vanish at infinity when weighted which is given by 
\begin{align*}
  \mathcal{CV}_{0}(\Omega,E):=\{f\in\mathcal{CV}(\Omega,E)
  \;|\;&\forall\;\varepsilon>0,\,j\in J,\,\alpha\in\mathfrak{A}
  \;\exists\;K\subset \Omega\;\text{compact}:\;|f|_{\Omega\setminus K,j,\alpha}<\varepsilon\} 
\end{align*}
 where 
\[
  |f|_{\Omega\setminus K,j,\alpha}:=\sup_{x\in\Omega\setminus K}p_{\alpha}(f(x))\nu_{j}(x).
\]
One of the main results from \cite{B1} solves our problem for $k=0$, Nachbin-families of weights 
and involves $k_{\mathbb{R}}$-spaces. A completely regular space $\Omega$ is a $k_{\mathbb{R}}$-space if for any 
completely regular space $Y$ and any map $f\colon \Omega \to Y$, 
whose restriction to each compact $K\subset\Omega$ is continuous, the map is already continuous on $\Omega$ 
(see \cite[(2.3.7) Proposition, p.\ 22]{buchwalter}). Obviously, every locally compact Hausdorff space 
is a $k_{\mathbb{R}}$-space. Further examples of $k_{\mathbb{R}}$-spaces are metrisable spaces 
by \cite[Proposition 11.5, p.\ 181]{james} and \cite[3.3.20, 3.3.21 Theorem, p.\ 152]{engelking} as well as 
strong duals of Fr\'{e}chet-Montel spaces by \cite[Proposition 3.27, p.\ 95]{fabian} and \cite[4.11 Theorem, p.\ 39]{kriegl}.
 
\begin{thm}[{\cite[5.5 Theorem, p.\ 205-206]{B1}}]\label{thm:bierstedt}
Let $E$ be a locally convex Hausdorff space, $\Omega$ a completely regular Hausdorff space 
and $\mathcal{V}$ a Nachbin-family on $\Omega$ such that 
one of the following conditions is satisfied.
\begin{enumerate}
\item [(i)] $\mathcal{Z}:=\left\{v\colon \Omega\to \mathbb{R}\;|\; v\;\text{constant},\;v\geq 0\right\}\leq \mathcal{V}$.
\item [(ii)] $\mathcal{W}:=\left\{\mu \chi_{K}\;|\; \mu>0,\;K\subset\Omega\;\text{compact}\right\}\leq \mathcal{V}$, 
where $\chi_{K}\colon \Omega\to \mathbb{R}$ is the characteristic function of $K$, and $\Omega$ is a $k_{\mathbb{R}}$-space.
\end{enumerate}
Then the following holds.
\begin{enumerate}
\item [a)] $\mathcal{CV}_{0}(\Omega)\otimes E$ is dense in $\mathcal{CV}_{0}(\Omega,E)$.
\item [b)] If $E$ is complete, then 
\[
 \mathcal{CV}_{0}(\Omega,E)\cong \mathcal{CV}_{0}(\Omega)\varepsilon E
 \cong \mathcal{CV}_{0}(\Omega)\widehat{\otimes}_{\varepsilon} E.
\]
\item [c)] $\mathcal{CV}_{0}(\Omega)$ has the approximation property.
\end{enumerate}
\end{thm}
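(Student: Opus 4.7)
The plan is to prove the three assertions in sequence: first the density in (a) by an explicit partition-of-unity construction, then the $\varepsilon$-product identification (b) from (a) together with a completeness argument, and finally the approximation property (c) as a formal consequence of (a) and (b) via the standard criterion relating the AP and $\varepsilon$-products.

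For (a), I fix $f\in \mathcal{CV}_{0}(\Omega,E)$, $\varepsilon>0$, $j\in J$, $\alpha\in\mathfrak{A}$. The defining property of $\mathcal{CV}_{0}$ furnishes a compact $K\subset\Omega$ with $|f|_{\Omega\setminus K,j,\alpha}<\varepsilon$. On $K$, upper semi-continuity of $\nu_{j}$ yields $M:=\sup_{K}\nu_{j}<\infty$, and continuity of $f$ lets me cover $K$ by finitely many open $U_{1},\dots,U_{n}$ with chosen $x_{i}\in U_{i}$ and $p_{\alpha}(f(x)-f(x_{i}))<\varepsilon$ on $U_{i}$. The central task is then to produce a partition of unity $(\varphi_{i})$ subordinate to $(U_{i})$, lying in $\mathcal{CV}_{0}(\Omega)$, with $\sum_{i}\varphi_{i}\equiv 1$ on $K$ and $0\le\sum_{i}\varphi_{i}\le 1$ on $\Omega$. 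Under (i), every nonnegative constant is dominated by $\mathcal{V}$, so any continuous, compactly supported function lies in $\mathcal{CV}_{0}(\Omega)$, and such $\varphi_{i}$ exist by Urysohn's lemma for the completely regular space $\Omega$ applied in a compact neighbourhood of $K$. Under (ii), the weights $\mu\chi_{K'}$ make vanishing at infinity automatic for every continuous function, so only genuine continuity of the $\varphi_{i}$ on $\Omega$ is needed; here the $k_{\mathbb{R}}$ hypothesis is invoked to upgrade a partition constructed piecewise on compacta to one continuous on all of $\Omega$. With the $\varphi_{i}$ in hand, $g:=\sum_{i}\varphi_{i}\otimes f(x_{i})\in\mathcal{CV}_{0}(\Omega)\otimes E$ satisfies $|f-g|_{j,\alpha}\le C\varepsilon$ for a constant $C$ depending only on $M$ and $|f|_{j,\alpha}$, proving density.

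For (b) I assume $E$ complete and define $\Phi\colon \mathcal{CV}_{0}(\Omega,E)\to \mathcal{CV}_{0}(\Omega)\,\varepsilon\, E$ as the transpose of the map $E'\ni e'\mapsto e'\circ f\in\mathcal{CV}_{0}(\Omega)$; equivalently $\Phi(f)$ sends a functional $\mu\in\mathcal{CV}_{0}(\Omega)'_{\kappa}$ to an element of $E$ determined by $f$. Well-definedness and injectivity of $\Phi$ are routine, while the bicontinuity is checked by matching the equicontinuous subsets of $\mathcal{CV}_{0}(\Omega)'_{\kappa}$ (which are essentially weighted compacta in $\Omega$ by a Dixmier–Ng / weighted Riesz argument) with the seminorms $|\,\cdot\,|_{j,\alpha}$. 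Completeness of $E$ transfers to completeness of $\mathcal{CV}_{0}(\Omega,E)$ since the $|\,\cdot\,|_{j,\alpha}$ are Cauchy-preserving and both the continuity and the weighted vanishing-at-infinity conditions are closed under uniform convergence on compacta. The image of $\mathcal{CV}_{0}(\Omega)\otimes E$ under the natural embedding into $\mathcal{CV}_{0}(\Omega,E)$ is precisely $\mathcal{CV}_{0}(\Omega)\otimes_{\varepsilon}E$; combining (a) with the completeness of $\mathcal{CV}_{0}(\Omega,E)$ identifies its closure with $\mathcal{CV}_{0}(\Omega)\widehat{\otimes}_{\varepsilon}E$, and via $\Phi$ with all of $\mathcal{CV}_{0}(\Omega)\,\varepsilon\, E$, yielding the chain of isomorphisms.

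Part (c) is then formal: by Schwartz's criterion, a locally convex Hausdorff space $F$ has the approximation property if and only if $F\otimes E$ is dense in $F\,\varepsilon\, E$ for every complete locally convex space $E$. Applying (a) and (b) with $F=\mathcal{CV}_{0}(\Omega)$ provides exactly this density, so $\mathcal{CV}_{0}(\Omega)$ has the AP. The principal obstacle throughout is the partition-of-unity step in (a): one must produce the $\varphi_{i}$ not merely in $\mathcal{C}(\Omega)$ but in $\mathcal{CV}_{0}(\Omega)$, and control $|\,\cdot\,|_{j,\alpha}$ of the approximation uniformly. Hypothesis (i) enforces this via compact support of the $\varphi_{i}$, whereas (ii) relies on the $k_{\mathbb{R}}$ assumption to pass from compact-set continuity of the glued partition to genuine global continuity on $\Omega$, which is what places the $\varphi_{i}$ inside $\mathcal{CV}_{0}(\Omega)$.
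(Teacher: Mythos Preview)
This theorem is not proved in the paper itself; it is quoted from Bierstedt \cite{B1}. The paper does, however, describe Bierstedt's argument in the remarks following Proposition~\ref{prop:localization} and proves a locally compact variant there, so one can compare your proposal against that. Your overall strategy for (a)--(c) matches Bierstedt's scheme, but your execution of (a) has a genuine gap.

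Under (i) you produce the $\varphi_i$ ``by Urysohn's lemma \dots\ applied in a compact neighbourhood of $K$''. But $\Omega$ is only completely regular, not locally compact: compact sets need not admit compact neighbourhoods, and there may be no nonzero compactly supported continuous functions on $\Omega$ whatsoever. Bierstedt avoids this by invoking Nachbin's partition-of-unity lemma \cite[23, Lemma~2]{nachbin1965}, which manufactures the $\varphi_i$ directly in $\mathcal{CV}_0(\Omega)$ rather than in $\mathcal{C}_c(\Omega)$. As the paper explains, the hypotheses of that lemma force Bierstedt to replace your $K=\widetilde{K}$ by the smaller set
\[
K':=\{x\in\Omega\;:\; p_\alpha(f(x))\nu_j(x)\ge\varepsilon\};
\]
upper semi-continuity of $\nu_j$ is precisely what makes $K'$ closed in $\widetilde{K}$ (hence compact) and what furnishes the bounded-weight open set playing the role of $W$ in Proposition~\ref{prop:localization}. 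Your account under (ii) is also off: $\mathcal{W}\le\mathcal{V}$ means $\mathcal{V}$ \emph{dominates} $\mathcal{W}$, so weighted vanishing at infinity is certainly not automatic for arbitrary continuous functions, and your description of how the $k_{\mathbb{R}}$ hypothesis enters (``upgrade a partition constructed piecewise on compacta'') does not correspond to a recognisable step of the argument. Parts (b) and (c) are fine in outline and agree with the paper's Corollary~\ref{cor:tensor.embed}.
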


Here $\mathcal{CV}_{0}(\Omega)\otimes E$ stands for the tensor product, $\mathcal{CV}_{0}(\Omega)\widehat{\otimes}_{\varepsilon} E$ 
for the completion of the injective tensor product and 
$\mathcal{CV}_{0}(\Omega)\varepsilon E:=L_{e}(\mathcal{CV}_{0}(\Omega)_{\kappa}',E)$ 
for the $\varepsilon$-product of Schwartz of the spaces $\mathcal{CV}_{0}(\Omega):=\mathcal{CV}_{0}(\Omega,\mathbb{K})$ and $E$. 
Part a) gives an affirmative answer to our question for the space $\mathcal{CV}_{0}(\Omega,E)$ since it implies 
that for every $\varepsilon>0$, $\alpha\in\mathfrak{A}$, $j\in J$ and $f\in\mathcal{CV}_{0}(\Omega,E)$ there are 
$m\in\mathbb{N}$, $f_{n}\in\mathcal{CV}_{0}(\Omega)$ and $e_{n}\in E$, $1\leq n\leq m$, such that
\[
  \bigl|f-\sum_{n=1}^{m}f_{n}e_{n}\bigr|_{j,\alpha}<\varepsilon.
\]
Concerning $\mathcal{CV}(\Omega,E)$, the answer to our question is not that satisfying but still affirmative if we make some 
restrictions on $E$. If $E$ has the approximation property, 
then $E\otimes_{\varepsilon}\mathcal{CV}(\Omega)$ is dense in $E\varepsilon\mathcal{CV}(\Omega)$. Due to the symmetries 
$\mathcal{CV}(\Omega)\otimes_{\varepsilon}E\cong E\otimes_{\varepsilon}\mathcal{CV}(\Omega)$ and 
$\mathcal{CV}(\Omega)\varepsilon E\cong E\varepsilon\mathcal{CV}(\Omega)$, 
we get that $\mathcal{CV}(\Omega)\otimes_{\varepsilon}E$ is dense in 
$\mathcal{CV}(\Omega)\varepsilon E\cong\mathcal{CV}(\Omega,E)$ if $E$ 
is a semi-Montel space with approximation property and $\mathcal{Z}\leq \mathcal{V}$ or $\Omega$ is a $k_{\mathbb{R}}$-space 
by \cite[2.12 Satz (1), p.\ 141]{B2}. A second condition for an affirmative answer without 
supposing that $E$ has the approximation property 
but putting more restrictions on $\mathcal{CV}(\Omega)$ can be found in \cite[2.12 Satz (2), p.\ 141]{B2}.

We aim to prove a version of Bierstedt's theorem for spaces of weighted continuously partially differentiable functions. 
To the best of our knowledge the approximation problem was not considered in a general setting for $k>0$ 
and open $\Omega\subset\mathbb{R}^{d}$, i.e.\ 
to derive sufficient conditions on the weights and the spaces such that the answer is positive.
For special cases with $\Omega=\mathbb{R}^{d}$ like the Schwartz space an affirmative answer was already given in 
e.g.\ \cite[Proposition 9, p.\ 108]{Schwartz1955} and \cite[Th\'{e}or\`{e}me 1, p.\ 111]{Schwartz1955}.  
For the space of $k$-times continuously partially differentiable functions on open $\Omega\subset\mathbb{R}^{d}$ 
with the topology of uniform convergence of all partial derivatives up to order $k$ on compact sets 
a positive answer can be found in e.g.\ \cite[Proposition 44.2, p.\ 448]{Treves} and \cite[Theorem 44.1, p.\ 449]{Treves}.
Let us consider the latter space for a moment and the corresponding proof given by Tr\`{e}ves in \cite{Treves}. 
The space $\mathcal{C}^{k}(\Omega,E)$ of $k$-times continuously 
partially differentiable functions on a locally compact Hausdorff space $\Omega$ if $k=0$ 
resp.\ open $\Omega\subset\mathbb{R}^{d}$ if $k\in\mathbb{N}\cup\{\infty\}$ 
is equipped with the system of seminorms given by
\begin{equation}\label{cv-complete.0}
q_{K,l,\alpha}(f):=\sup_{\substack{x\in K\\\beta\in\mathbb{N}^{d}_{0},|\beta|\leq l}}
p_{\alpha}\bigl(\partial^{\beta}f(x)\bigr),\quad 
f\in \mathcal{C}^{k}(\Omega,E),
\end{equation}
for $K\subset\Omega$ compact, $l\in\mathbb{N}_{0}$, $0\leq l\leq k$ if $k<\infty$, and $\alpha\in \mathfrak{A}$. 
For $E=\mathbb{K}$ we fix the notion $\mathcal{C}^{k}(\Omega):=\mathcal{C}^{k}(\Omega,\mathbb{K})$ 
and denote by $\mathcal{C}^{k}_{c}(\Omega)$ the space of all functions in $\mathcal{C}^{k}(\Omega)$ 
having compact support.
Tr\`{e}ves' affirmative answer to our question has the following form.

\begin{thm}[{\cite[Proposition 44.2, p.\ 448]{Treves} and 
\cite[Theorem 44.1, p.\ 449]{Treves}}]\label{thm:treves}
Let $E$ be a locally convex Hausdorff space, $k\in\mathbb{N}_{0}\cup\{\infty\}$ 
and $\Omega$ a locally compact Hausdorff space if $k=0$ resp.\ 
an open subset of $\mathbb{R}^{d}$ if $k>0$. 
Then the following is true.
\begin{enumerate}
\item [a)] $\mathcal{C}^{0}_{c}(\Omega)\otimes E$ is dense in $\mathcal{C}^{0}(\Omega,E)$.
\item [b)] $\mathcal{C}^{\infty}_{c}(\Omega)\otimes E$ is dense in $\mathcal{C}^{k}(\Omega,E)$.
\item [c)] If $E$ is complete, then 
\[
 \mathcal{C}^{k}(\Omega,E)\cong \mathcal{C}^{k}(\Omega)\widehat{\otimes}_{\varepsilon} E.
\]
\end{enumerate}
\end{thm}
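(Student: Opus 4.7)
The plan is to prove parts (a) and (b) by a partition-of-unity plus Taylor-polynomial construction producing explicit approximants in the algebraic tensor product, and then to derive (c) by combining the density from (b) with the identification of $\mathcal{C}^{k}(\Omega,E)$ as the completion of $\mathcal{C}^{k}(\Omega)\otimes_{\varepsilon}E$ when $E$ is complete.

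For (a), fix $f\in\mathcal{C}^{0}(\Omega,E)$, a compact $K\subset\Omega$, $\alpha\in\mathfrak{A}$ and $\varepsilon>0$. Continuity of $f$ supplies, for each $x\in K$, an open neighbourhood $U_{x}$ on which $p_{\alpha}(f(\cdot)-f(x))<\varepsilon$; local compactness reduces to a finite subcover of $K$ by relatively compact $U_{x_{i}}$, and the partition-of-unity theorem on a locally compact Hausdorff space yields $(\varphi_{i})_{i=1}^{N}\subset\mathcal{C}^{0}_{c}(\Omega)$ with $\operatorname{supp}\varphi_{i}\subset U_{x_{i}}$, $\varphi_{i}\geq 0$ and $\sum_{i}\varphi_{i}\equiv 1$ on $K$. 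Then $g:=\sum_{i=1}^{N}\varphi_{i}\otimes f(x_{i})\in\mathcal{C}^{0}_{c}(\Omega)\otimes E$ satisfies $f(y)-g(y)=\sum_{i}\varphi_{i}(y)(f(y)-f(x_{i}))$ for $y\in K$, which gives $q_{K,0,\alpha}(f-g)\leq\varepsilon$ by the convex-combination inequality.

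Part (b) refines this. For compact $K\subset\Omega$, order $l\leq k$, $\alpha\in\mathfrak{A}$ and $\varepsilon>0$, pick $\delta>0$ so small that the closed $\delta$-neighbourhood of $K$ lies in $\Omega$, cover $K$ by balls $B(x_{i},\delta)$ of bounded overlap multiplicity, and take a smooth partition of unity $(\varphi_{i})\subset\mathcal{C}^{\infty}_{c}(\Omega)$ subordinate to this cover with the standard scaling $\sup|\partial^{\beta}\varphi_{i}|\leq c_{\beta}\delta^{-|\beta|}$. Replacing the constant $f(x_{i})$ of (a) by the order-$l$ Taylor polynomial
\[
  T_{i}(y):=\sum_{|\beta|\leq l}\frac{(y-x_{i})^{\beta}}{\beta!}\,\partial^{\beta}f(x_{i}),
\]
the approximant $g:=\sum_{i}\varphi_{i}T_{i}$ lies in $\mathcal{C}^{\infty}_{c}(\Omega)\otimes E$ after expansion. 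For $|\gamma|\leq l$ the Leibniz rule decomposes $\partial^{\gamma}(f-g)$ into a finite sum of terms $\partial^{\gamma-\mu}\varphi_{i}\cdot\partial^{\mu}(f-T_{i})$; Taylor's theorem for $E$-valued $\mathcal{C}^{k}$-maps yields $p_{\alpha}(\partial^{\mu}(f-T_{i})(y))=o(|y-x_{i}|^{l-|\mu|})$ uniformly in $x_{i}\in K$, and combining with the derivative bound $|\partial^{\gamma-\mu}\varphi_{i}|\leq c_{\gamma-\mu}\delta^{-|\gamma-\mu|}$ produces an overall estimate $o(\delta^{l-|\gamma|})=o(1)$, so $q_{K,l,\alpha}(f-g)\to 0$ as $\delta\to 0$.

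For (c), the natural map $J\colon\mathcal{C}^{k}(\Omega)\otimes E\to\mathcal{C}^{k}(\Omega,E)$, $\varphi\otimes e\mapsto\varphi(\cdot)e$, is injective, and I would verify that each seminorm $q_{K,l,\alpha}\circ J$ equals the $\varepsilon$-tensor seminorm defined by the polars of the equicontinuous set $\{\delta_{x}\circ\partial^{\beta}:x\in K,\,|\beta|\leq l\}\subset\mathcal{C}^{k}(\Omega)'_{\kappa}$ and the $p_{\alpha}$-unit-ball polar in $E'_{\kappa}$. Part (b) then provides density of $J(\mathcal{C}^{k}(\Omega)\otimes E)$ in $\mathcal{C}^{k}(\Omega,E)$, and when $E$ is complete $\mathcal{C}^{k}(\Omega,E)$ is itself complete (pointwise limits of Cauchy nets exist in $E$, and uniform convergence of derivatives on compacta preserves $\mathcal{C}^{k}$-regularity), so extending $J$ to the completion of its domain yields the isomorphism $\mathcal{C}^{k}(\Omega,E)\cong\mathcal{C}^{k}(\Omega)\widehat{\otimes}_{\varepsilon}E$. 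The main obstacle I expect is the uniform Taylor estimate invoked in (b): the $o(|y-x_{i}|^{l-|\mu|})$ remainder bound in the seminorm $p_{\alpha}$ must hold \emph{uniformly} as $x_{i}$ varies over $K$, which is not automatic from a pointwise vector-valued Taylor expansion. The standard remedy is the integral form of the remainder, reducing the bound to uniform continuity of the top-order derivatives $\partial^{\beta}f$ on a compact neighbourhood of $K$, valid by compactness; the underlying $E$-valued integral is handled directly when $E$ is sequentially complete and bypassed by scalarisation against elements of $E'$ otherwise.
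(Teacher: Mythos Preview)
Your argument for (a) and your derivation of (c) from (b) match the paper's route (a partition-of-unity construction for (a); identification of the $\varepsilon$-tensor topology with the subspace topology, density, and completeness for (c)). For (b), however, you take a genuinely different path from the one the paper describes. Following Tr\`{e}ves, the paper proceeds in three steps: first multiply $f$ by a smooth cut-off to obtain $\widetilde{f}\in\mathcal{C}^{k}_{c}(\Omega,E)$; second, approximate $\widetilde{f}$ by the mollifications $\widetilde{f}\ast\rho_{n}$ in $\mathcal{C}^{k}(\Omega,\widehat{E})$; third, approximate $\widetilde{f}$ in the $\mathcal{C}^{0}$-topology by some $g\in\mathcal{C}^{0}_{c}(\Omega)\otimes E$ via part (a), so that $g\ast\rho_{n}\in\mathcal{C}^{\infty}_{c}(\Omega)\otimes E$ approximates $\widetilde{f}\ast\rho_{n}$. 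Your Taylor-polynomial scheme avoids convolution altogether and is more elementary: no vector-valued (Pettis) integration, no detour through the completion $\widehat{E}$, and no splitting into three stages; the uniform Peano remainder you flag as the delicate point is indeed available from uniform continuity of the top-order derivatives on a compact neighbourhood of $K$ (the mean value inequality already suffices, so the integral-remainder/scalarisation fallback is not strictly needed). The convolution route, on the other hand, is precisely what the paper wants for its weighted generalisation in \prettyref{thm:weighted.diff}: it cleanly decouples the cut-off step (governed by the cut-off criterion on the weights, \prettyref{def:cut-off}) from the smoothing step (mollification of a compactly supported function, where only local boundedness of the weights enters, \prettyref{lem:regularization}). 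Adapting your Taylor approach to arbitrary weights would require controlling the interaction of the Leibniz expansion with $\nu_{j,l}$ outside the given compact set, which is less transparent.
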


We observe that $\mathcal{CW}(\Omega,E)=\mathcal{CW}_{0}(\Omega,E)=\mathcal{C}^{0}(\Omega,E)$ 
equipped with the usual topology of uniform convergence on compact subsets of $\Omega$ which means that 
\prettyref{thm:bierstedt} contains the case $k=0$ of the preceding theorem since locally compact Hausdorff spaces are 
$k_{\mathbb{R}}$-spaces. 
The proofs of \prettyref{thm:bierstedt} a) and \prettyref{thm:treves} a) are done by using 
different partitions of unity, the first uses the partition of unity from \cite[23, Lemma 2, p.\ 71]{nachbin1965} and the second the one 
from \cite[Chap.\ IX, \S4.3, Corollary, p.\ 186]{bourbakiII}. The key idea for the proof of \prettyref{thm:treves} b) is 
an approximation in three steps relying on part a) and convolution. 
First, for every $f\in\mathcal{C}^{k}(\Omega,E)$ there is an approximation 
$\widetilde{f}\in\mathcal{C}^{k}_{c}(\Omega,E)$ of $f$ by multiplication of $f$ with a suitable cut-off function. 
Second, for every $\widetilde{f}\in\mathcal{C}^{k}_{c}(\Omega,E)$ the 
convolution $\widetilde{f}\ast\rho_{n}$ of $\widetilde{f}$ with a sequence $(\rho_{n})$ of molifiers 
in $\mathcal{C}^{\infty}_{c}(\Omega)$ 
converges to $\widetilde{f}$ in $\mathcal{C}^{k}(\Omega,\widehat{E})$ where $\widehat{E}$ denotes the completion of $E$ 
(approximation by regularisation). 
Third, for every $\widetilde{f}\in\mathcal{C}^{k}_{c}(\Omega,E)$ 
there is an approximation $g\in \mathcal{C}^{0}_{c}(\Omega)\otimes E$ in the topology of $\mathcal{C}^{0}(\Omega,E)$ by part a). 
Using the properties of the convolution, one gets that $g\ast\rho_{n}\in\mathcal{C}^{\infty}_{c}(\Omega)\otimes E$ 
and approximates $\widetilde{f}\ast\rho_{n}$ for $n$ large enough in $\mathcal{C}^{k}(\Omega,\widehat{E})$ 
which itself is identical to the completion of $\mathcal{C}^{k}(\Omega,E)$.

The outline of our paper is along the lines of Tr\`{e}ves' proof. After introducing some notation and preliminaries in Section 2, 
we define the weighted spaces $\mathcal{CV}^{k}(\Omega,E)$ and $\mathcal{CV}^{k}_{0}(\Omega,E)$ in Section 3 and show that 
they are complete if the family of weights $\mathcal{V}^{k}$ is locally bounded away from zero 
(see \prettyref{def:loc.bound.away}). Then we treat their relation to the space $\mathcal{C}^{k}_{c}(\Omega,E)$
of functions in $\mathcal{C}^{k}(\Omega,E)$ with compact support where the condition of local boundedness 
of a family of weights comes into play (see \prettyref{def:loc.bound}). 
We formulate a cut-off criterion (see \prettyref{def:cut-off}) which is a sufficient condition for the density of
$\mathcal{C}^{k}_{c}(\Omega,E)$ in $\mathcal{CV}^{k}_{0}(\Omega,E)$ for locally bounded $\mathcal{V}^{k}$. 
We close the third section with the relation between tensor products and our problem on finite dimensional approximation.
In Section 4 we define the convolution $f\ast g$ of $f\in\mathcal{C}^{k}(\R^{d},E)$ and $g\in\mathcal{C}^{n}(\R^{d})$ 
when one of them is compactly supported and prove an approximation by regularisation result. 
In the last section we verify the corresponding part a) of \prettyref{thm:treves} for $\mathcal{CV}^{0}_{0}(\Omega,E)$ 
with locally compact $\Omega$ where we adapt the proof of \prettyref{thm:bierstedt} a) in a way that we can 
use the partition of unity from \cite[Chap.\ IX, \S4.3, Corollary, p.\ 186]{bourbakiII} instead and weaken
the condition of upper semi-continuity of the weights to being locally bounded and locally bounded away from zero. 
Then we mix all ingredients to get our main \prettyref{thm:weighted.diff} which is a version of 
\prettyref{thm:bierstedt} and \ref{thm:treves} for barrelled $\mathcal{CV}^{k}_{0}(\Omega)$ 
with a family of weights $\mathcal{V}^{k}$ being locally bounded and locally bounded away from zero if 
$\mathcal{CV}^{k}_{0}(\Omega,E)$ fulfils the cut-off criterion.
\section{Notation and Preliminaries}
We set $\mathbb{N}_{\infty}:=\mathbb{N}\cup\{\infty\}$ and $\mathbb{N}_{0,\infty}:=\mathbb{N}_{0}\cup\{\infty\}$. 
For $k\in\mathbb{N}_{0,\infty}$ we use the notation
$\langle k \rangle:=\{n\in\mathbb{N}_{0}\;|\; 0\leq n\leq k\}$ if $k\neq\infty$ 
and $\langle k \rangle:=\mathbb{N}_{0}$ if $k=\infty$.
We equip the spaces $\mathbb{R}^{d}$, $d\in\mathbb{N}$, and $\mathbb{C}$ with the usual Euclidean norm $|\cdot|$, 
write $\overline{M}$ for the closure of a subset $M\subset\mathbb{R}^{d}$ and
denote by $\mathbb{B}_{r}(x):=\{w\in\mathbb{R}^{d}\;|\;|w-x|<r\}$ 
the ball around $x\in\mathbb{R}^{d}$ with radius $r>0$.

By $E$ we always denote a non-trivial locally convex Hausdorff space, in short lcHs, over the field 
$\mathbb{K}=\mathbb{R}$ or $\mathbb{C}$ equipped with a directed fundamental system of 
seminorms $(p_{\alpha})_{\alpha\in \mathfrak{A}}$. 
If $E=\mathbb{K}$, then we set $(p_{\alpha})_{\alpha\in \mathfrak{A}}:=\{|\cdot|\}.$ 
Further, we denote by $\widehat{E}$ the completion of a locally convex Hausdorff space $E$.
For details on the theory of locally convex spaces see \cite{F/W/Buch}, \cite{Jarchow} or \cite{meisevogt1997}.

A function $f\colon\Omega\to E$ on an open set $\Omega\subset\mathbb{R}^{d}$ to a locally convex Hausdorff space $E$ is called 
continuously partially differentiable ($f$ is $\mathcal{C}^{1}$) 
if for the $n$-th unit vector $e_{n}\in\mathbb{R}^{d}$ the limit
\[
(\partial^{e_{n}})f(x):=(\partial^{e_{n}})^{E}f(x):=(\partial_{x_{n}})^{E}f(x)
:=\lim_{\substack{h\to 0\\ h\in\mathbb{R}, h\neq 0}}\frac{f(x+he_{n})-f(x)}{h}
\]
exists in $E$ for every $x\in\Omega$ and $\partial^{e_{n}}f$ 
is continuous on $\Omega$ ($\partial^{e_{n}}f$ is $\mathcal{C}^{0}$) for every $1\leq n\leq d$.
For $k\in\mathbb{N}$ a function $f$ is said to be $k$-times continuously partially differentiable 
($f$ is $\mathcal{C}^{k}$) if $f$ is $\mathcal{C}^{1}$ and all its first partial derivatives are $\mathcal{C}^{k-1}$.
A function $f$ is called infinitely continuously partially differentiable ($f$ is $\mathcal{C}^{\infty}$) 
if $f$ is $\mathcal{C}^{k}$ for every $k\in\mathbb{N}$.
For $k\in\mathbb{N}_{\infty}$ the linear space of all functions $f\colon\Omega\to E$ which are $\mathcal{C}^{k}$ 
is denoted by $\mathcal{C}^{k}(\Omega,E)$. Its subspace of functions with compact support 
is written as $\mathcal{C}^{k}_{c}(\Omega,E)$ where we denote the support 
of $f\in\mathcal{C}^{k}(\Omega,E)$ by $\operatorname{supp} f$.

Let $f\in\mathcal{C}^{k}(\Omega,E)$. For $\beta\in\mathbb{N}_{0}^{d}$ with 
$|\beta|:=\sum_{n=1}^{d}\beta_{n}\leq k$ we set 
$\partial^{\beta_{n}}f:=(\partial^{\beta_{n}})^{E}f:=f$ if $\beta_{n}=0$, and
\[
\partial^{\beta_{n}}f:=(\partial^{\beta_{n}})^{E}f
:=\underbrace{(\partial^{e_{n}})^{E}\cdots(\partial^{e_{n}})^{E}}_{\beta_{n}\text{-times}}f
\]
if $\beta_{n}\neq 0$ as well as 
\[
\partial^{\beta}f:=(\partial^{\beta})^{E}f
:=\partial^{\beta_{1}}\cdots\partial^{\beta_{d}}f.
\]
Due to the vector-valued version of Schwarz' theorem $\partial^{\beta}f$ is independent of the order of the partial 
derivatives on the right-hand side and we call $|\beta|$ the order of differentiation.
Further, we observe that $e'\circ f\in \mathcal{C}^{k}\left(\Omega\right)$ and 
$(\partial^{\beta})^{\mathbb{K}}(e'\circ f)=e'\circ (\partial^{\beta})^{E}f$ for every $e'\in E'$,
$f\in\mathcal{C}^{k}(\Omega,E)$ and $|\beta|\leq k$.

By $L(F,E)$ we denote the space of continuous linear operators from $F$ to $E$ where $F$ and $E$ are locally convex Hausdorff spaces. 
If $E=\mathbb{K}$, we just write $F':=L(F,\mathbb{K})$ for the dual space.
If $F$ and $E$ are (linearly topologically) isomorphic, we write $F\cong E$.
The so-called $\varepsilon$-product of Schwartz is defined by 
\begin{equation}\label{notation0}
F\varepsilon E:=L_{e}(F_{\kappa}',E)
\end{equation}
where $F'$ is equipped with the topology of uniform convergence on absolutely convex compact subsets of $F$ and 
$L(F_{\kappa}',E)$ is equipped with the topology of uniform convergence on equicontinuous subsets of $F'$ 
(see \cite[Chap.\ I, \S1, D\'{e}finition, p.\ 18]{Sch1}). 
It is symmetric which means that $F\varepsilon E\cong E\varepsilon F$ and in the literature the definition of the 
$\varepsilon$-product is sometimes done the other way around, i.e.\ $E\varepsilon F$ is defined by the right-hand side 
of \eqref{notation0}.
We write $F\,\widehat{\otimes}_{\varepsilon}E$ for the completion of the injective tensor product 
$F\otimes_{\varepsilon}E$ and denote by $\mathfrak{F}(E)$ the space of linear operators from $E$ to $E$ with finite rank. 
We recall from the introduction that a locally convex Hausdorff space $E$ is said to have (Schwartz') approximation property 
if the identity $I_{E}$ on $E$ is contained in the closure of $\mathfrak{F}(E)$ in $L_{\kappa}(E):=L_{\kappa}(E,E)$ 
which is equipped with the topology of uniform convergence on the absolutely convex compact subsets of $E$. 
The space $E$ has the approximation property if and only if $E\otimes F$ is dense in $E\varepsilon F$ 
for every locally convex Hausdorff space (every Banach space) $F$ by \cite[Satz 10.17, p.\ 250]{Kaballo}.
For more information on the theory of $\varepsilon$-products 
and tensor products see \cite{Defant}, \cite{Jarchow} and \cite{Kaballo}. 
\section{Weighted vector-valued differentiable functions and the $\varepsilon$-product}

In this section we introduce the spaces $\mathcal{CV}^{k}(\Omega,E)$ and 
$\mathcal{CV}^{k}_{0}(\Omega,E)$ we want to consider. Then we turn to the question of completeness 
of $\mathcal{CV}^{k}(\Omega,E)$ and $\mathcal{CV}^{k}_{0}(\Omega,E)$ and when $\mathcal{C}^{k}_{c}(\Omega,E)$ 
is dense in the latter space. At the end of this section we describe their connection 
to the $\varepsilon$-product and the (completion of the) injective tensor product 
and derive sufficient conditions such that they coincide.

\begin{defn}[{weight}]
Let $k\in\mathbb{N}_{0,\infty}$.  We say that 
$\mathcal{V}^{k}:=(\nu_{j,l})_{j\in J,l\in\langle k \rangle}$ 
is a (directed) family of weights on a locally compact Hausdorff space $\Omega$ if $\nu_{j,l}\colon \Omega\to [0,\infty)$ for every 
$j\in J$, $l\in\langle k\rangle$ and
\[
  \forall\;j_{1},j_{2}\in J,\,l_{1},l_{2}\in\langle k\rangle\;\exists\;j_{3}\in J,\,l_{3}\in\langle k\rangle,\,C>0\;
  \forall\;i\in\{1,2\}:\;
  \nu_{j_{i},l_{i}}\leq C\nu_{j_{3},l_{3}}
\]
as well as
\[
  \forall\;l\in \langle k\rangle,\,x\in\Omega\;\exists\;j\in J:\;0<\nu_{j,l}(x).
\]
\end{defn}

\begin{defn}\label{def:weighted_diff_spaces}
 For $k\in\mathbb{N}_{0,\infty}$ and a (directed) family
 $\mathcal{V}^{k}:=(\nu_{j,l})_{j\in J,l\in\langle k \rangle}$ 
 of weights on a locally compact Hausdorff space $\Omega$ if $k=0$ or an open set $\Omega\subset\mathbb{R}^{d}$ if 
 $k\in\mathbb{N}_{\infty}$ we define the space of weighted continuous resp.\
 $k$-times continuously partially differentiable functions with values in an lcHs $E$ as
 \[
  \mathcal{CV}^{k}(\Omega,E):=\{f\in\mathcal{C}^{k}(\Omega,E)\;|\;\forall\;j\in J,\,l\in\langle k \rangle,\,
  \alpha\in\mathfrak{A}:\;|f|_{j,l,\alpha}<\infty\} 
 \]
 where 
  \[
  |f|_{j,l,\alpha}:=\sup_{\substack{x\in\Omega\\ \beta\in\mathbb{N}_{0}^{d},|\beta|\leq l}}
  p_{\alpha}\bigl((\partial^{\beta})^{E}f(x)\bigr)\nu_{j,l}(x).
  \]
  We define the topological subspace of $\mathcal{CV}^{k}(\Omega,E)$ consisting of the functions 
  that vanish with all their derivatives when weighted at infinity by 
  \begin{align*}
  \mathcal{CV}^{k}_{0}(\Omega,E):=\{f\in\mathcal{CV}^{k}(\Omega,E)\;|\;&\forall\;j\in J,\,
  l\in\langle k \rangle,\,\alpha\in\mathfrak{A},\,\varepsilon>0\\
  &\exists\;K\subset \Omega\;\text{compact}:\;|f|_{\Omega\setminus K,j,l,\alpha}<\varepsilon\} 
  \end{align*}
   where 
  \[
  |f|_{\Omega\setminus K,j,l,\alpha}:=\sup_{\substack{x\in\Omega\setminus K\\ \beta\in\mathbb{N}_{0}^{d},|\beta|\leq l}}
  p_{\alpha}\bigl((\partial^{\beta})^{E}f(x)\bigr)\nu_{j,l}(x).
  \]
\end{defn}

It is easily seen that these spaces are locally convex Hausdorff spaces with a directed system of seminorms 
due to our assumptions on the family $\mathcal{V}^{k}$ of weights. 

\begin{rem}
Suppose that in the definition of the space $\mathcal{CV}^{k}(\Omega,E)$ the weights also depend on $\beta\in\N_{0}^{d}$, i.e.\ 
the seminorms used to define $\mathcal{CV}^{k}(\Omega,E)$ are of the form 
\[
|f|_{j,l,\alpha}^{\sim}:=\sup_{\substack{x\in\Omega\\ \beta\in\mathbb{N}_{0}^{d},|\beta|\leq l}}
  p_{\alpha}\bigl((\partial^{\beta})^{E}f(x)\bigr)\nu_{j,l,\beta}(x).
\]
Without loss of generality we may always use weights which are independent of $\beta$. Namely, 
by setting $\nu_{j,l}:=\max_{\beta\in\N_{0}^{d},|\beta|\leq l}\nu_{j,l,\beta}$ for $j\in J$ and $l\in\langle k \rangle$, 
we can switch to the usual system of seminorms $(|f|_{j,l,\alpha})$ induced by the weights $(\nu_{j,l})$ 
which is equivalent to $(|f|_{j,l,\alpha}^{\sim})$. 
\end{rem}

The standard structure of a directed family $\mathcal{V}^{k}$ of weights 
on a locally compact Hausdorff space $\Omega$ is given by the following. 
Let $(\Omega_{j})_{j\in J}$ be a family of sets such that 
$\Omega_{j}\subset\Omega_{j+1}$ for all $j\in J$ with $\Omega=\bigcup_{j\in J}\Omega_{j}$.
Let $\widetilde{\nu}_{j,l}\colon\Omega\to (0,\infty)$ be continuous for all $j\in J$ 
and $l\in\langle k \rangle$, increasing in $j\in J$, i.e.\ 
$\widetilde{\nu}_{j,l}\leq \widetilde{\nu}_{j+1,l}$, and in $l\in\langle k \rangle$, i.e.\
$\widetilde{\nu}_{j,l}\leq \widetilde{\nu}_{j,l+1}$ if $l+1\in\langle k \rangle$,
such that 
\[
\nu_{j,l}(x)=\chi_{\Omega_{j}}(x)\widetilde{\nu}_{j,l}(x),\quad x\in \Omega,
\]
for every $j\in J$ and $l\in\langle k \rangle$ where $\chi_{\Omega_{j}}$ is the indicator function of $\Omega_{j}$. 
Further, we remark that the spaces $\mathcal{CV}^{k}(\Omega,E)$ and $\mathcal{CV}^{k}_{0}(\Omega,E)$ might coincide 
which is already mentioned in \cite[1.3 Bemerkung, p.\ 189]{B1} for $k=0$. 

\begin{rem}\label{rem:cv=cv_0}
If for every $j\in J$ and $l\in\langle k \rangle$, there are $i\in J$ and 
$m\in\langle k \rangle$ such that for all $\varepsilon>0$ there is a compact set $K\subset\Omega$ with 
$\nu_{j,l}(x)\leq \varepsilon\nu_{i,m}(x)$ for all $x\in\Omega\setminus K$, 
then $\mathcal{CV}^{k}(\Omega,E)=\mathcal{CV}^{k}_{0}(\Omega,E)$.
\end{rem}

Examples of spaces where this happens are $\mathcal{C}^{k}(\Omega,E)$ with the topology of uniform convergence of 
all partial derivatives up to order $k$ on compact subsets of $\Omega$ and the Schwartz space $\mathcal{S}(\mathbb{R}^{d},E)$.

\begin{exa}\label{ex:standard_diff_spaces}
Let $E$ be an lcHs, $k\in\mathbb{N}_{0,\infty}$ and $\Omega\subset\mathbb{R}^{d}$ open. Then we have 
\begin{enumerate}
\item [a)] $\mathcal{C}^{k}(\Omega,E)=\mathcal{CW}^{k}(\Omega,E)=\mathcal{CW}^{k}_{0}(\Omega,E)$
with $\mathcal{W}^{k}:=\{\nu_{j,l}:=\chi_{\Omega_{j}}\;|\;j\in\mathbb{N},\,l\in\langle k \rangle\}$ 
where $(\Omega_{j})_{j\in\mathbb{N}}$ is a compact exhaustion of $\Omega$, 
\item [b)] $\mathcal{S}(\mathbb{R}^{d},E)=\mathcal{CV}^{\infty}(\mathbb{R}^{d},E)=\mathcal{CV}^{\infty}_{0}(\mathbb{R}^{d},E)$ with 
$\mathcal{V}^{\infty}:=\{\nu_{j,l}\;|\;j\in\mathbb{N},\,l\in\mathbb{N}_{0}\}$ where
$\nu_{j,l}(x):=(1+|x|^{2})^{l/2}$ for $x\in\mathbb{R}^{d}$.
\end{enumerate}
\end{exa}
\begin{proof}
\begin{enumerate}
\item [a)] $(\Omega_{j})_{j\in\mathbb{N}}$ being a compact exhaustion of $\Omega$ means that
$\Omega=\bigcup_{j\in \mathbb{N}}\Omega_{j}$, $\Omega_{j}$ is compact and $\Omega_{j}\subset\mathring{\Omega}_{j+1}$ for all 
$j\in\mathbb{N}$ where $\mathring{\Omega}_{j+1}$ is the set of inner points of $\Omega_{j+1}$.
For compact $\Omega_{j}\subset\Omega$ and $l\in\langle k \rangle$ our claim follows 
from \prettyref{rem:cv=cv_0} with the choice $i:=j$, $m:=l$ and $K:=\Omega_{j}$.
\item [b)] We recall that the Schwartz space is defined by
\[
\mathcal{S}(\mathbb{R}^{d},E)
:=\bigl\{f\in \mathcal{C}^{\infty}(\mathbb{R}^{d},E)\;|\;
\forall\;l\in\mathbb{N}_{0},\;\alpha\in \mathfrak{A}:\;\|f\|_{l,\alpha}<\infty\bigr\}
\]
where 
\[
 \|f\|_{l,\alpha}:=\sup_{\substack{x\in\mathbb{R}^{d}\\ \beta\in\mathbb{N}_{0}^{d},|\beta|\leq l}}
 p_{\alpha}\bigl((\partial^{\beta})^{E}f(x)\bigr)(1+|x|^{2})^{l/2}.
\]
Thus $\mathcal{S}(\mathbb{R}^{d},E)=\mathcal{CV}^{\infty}(\mathbb{R}^{d},E)$. 
We note that for every $j\in\mathbb{N}$, $l\in\mathbb{N}_{0}$ and $\varepsilon>0$ there is $r>0$ such 
that
\[
\frac{\nu_{j,l}(x)}{\nu_{j,2(l+1)}(x)}=\frac{(1+|x|^{2})^{l/2}}{(1+|x|^{2})^{l+1}}=(1+|x|^{2})^{-(l/2)-1}<\varepsilon
\]
for all $x\notin\overline{\mathbb{B}_{r}(0)}=:K$ 
yielding $\mathcal{S}(\mathbb{R}^{d},E)=\mathcal{CV}^{\infty}_{0}(\mathbb{R}^{d},E)$ by \prettyref{rem:cv=cv_0}.
\end{enumerate}
\end{proof}

The question of finite dimensional approximation from the introduction is closely connected to the 
property of a family of weights being locally bounded away from zero. 

\begin{defn}[{locally bounded away from zero}]\label{def:loc.bound.away}
Let $\Omega$ be a locally compact Hausdorff space and $k\in\mathbb{N}_{0,\infty}$. 
A family of weights $\mathcal{V}^{k}$ is called locally bounded away from zero on $\Omega$ if
\[
 \forall\;K\subset\Omega\;\text{compact},\,l\in\langle k \rangle\;\exists\; 
 j\in J:\;\inf_{x\in K}\nu_{j,l}(x)>0.
\]
%\[
% \forall\;K\subset\Omega\;\text{compact},l\in\langle k \rangle\;\exists\; 
% j\in J,\,\beta\in\mathbb{N}_{0}^{d},\,|\beta|\leq l:\;\inf_{x\in K}\nu_{j,l,\beta}(x)>0.
%\]
\end{defn} 

For $k=0$ (and locally compact Hausdorff $\Omega$) this coincides with condition (ii) of \prettyref{thm:bierstedt}. 
It even guarantees that the spaces $\mathcal{CV}^{k}(\Omega,E)$ and $\mathcal{CV}^{k}_{0}(\Omega,E)$ are complete 
for complete $E$.

\begin{prop}\label{prop:cv-complete}
Let $E$ be a complete lcHs, $k\in\mathbb{N}_{0,\infty}$ and $\mathcal{V}^{k}$ be a family of weights 
which is locally bounded away from zero on a locally compact Hausdorff space $\Omega$ ($k=0$) or 
an open set $\Omega\subset\mathbb{R}^{d}$ ($k>0$).
Then $\mathcal{CV}^{k}(\Omega,E)$ and $\mathcal{CV}^{k}_{0}(\Omega,E)$ are complete 
locally convex Hausdorff spaces. 
In particular, they are Fr\'{e}chet spaces if $E$ is a Fr\'{e}chet space and $J$ countable.
\end{prop}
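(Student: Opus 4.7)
The plan is to reduce the completeness of $\mathcal{CV}^{k}(\Omega,E)$ to the classical completeness of $\mathcal{C}^{k}(\Omega,E)$ equipped with the seminorms $q_{K,l,\alpha}$ from \eqref{cv-complete.0} (which holds whenever $E$ is complete), and then to lift the limit back through the weighted seminorms. The hypothesis that $\mathcal{V}^{k}$ is locally bounded away from zero is exactly what makes the inclusion $\mathcal{CV}^{k}(\Omega,E)\hookrightarrow\mathcal{C}^{k}(\Omega,E)$ continuous: for every compact $K\subset\Omega$, $l\in\langle k\rangle$ and $\alpha\in\mathfrak{A}$, I would pick $j\in J$ with $c:=\inf_{x\in K}\nu_{j,l}(x)>0$ and obtain
\[
q_{K,l,\alpha}(g)\leq c^{-1}|g|_{j,l,\alpha},\quad g\in\mathcal{CV}^{k}(\Omega,E).
\]

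Given a Cauchy net $(f_{\iota})_{\iota\in I}$ in $\mathcal{CV}^{k}(\Omega,E)$, the inequality above shows it is also Cauchy in $\mathcal{C}^{k}(\Omega,E)$, hence converges there to some $f\in\mathcal{C}^{k}(\Omega,E)$ with $(\partial^{\beta})^{E}f_{\iota}\to(\partial^{\beta})^{E}f$ uniformly on compacts for every $|\beta|\leq k$ (for $k=0$ this is just the standard completeness of $\mathcal{C}(\Omega,E)$ on a locally compact Hausdorff space). To see that $f$ lies in $\mathcal{CV}^{k}(\Omega,E)$ and $f_{\iota}\to f$ there, I would fix $j,l,\alpha$ and $\varepsilon>0$, choose $\iota_{0}$ so that $|f_{\iota}-f_{\iota'}|_{j,l,\alpha}<\varepsilon$ for all $\iota,\iota'\geq\iota_{0}$, and then for each fixed $x\in\Omega$ and $|\beta|\leq l$ let $\iota'$ run through the net using continuity of $p_{\alpha}$ to get
\[
p_{\alpha}\bigl((\partial^{\beta})^{E}f_{\iota}(x)-(\partial^{\beta})^{E}f(x)\bigr)\nu_{j,l}(x)\leq\varepsilon.
\]
Taking the supremum in $x$ and $\beta$ yields $|f_{\iota}-f|_{j,l,\alpha}\leq\varepsilon$, whence $|f|_{j,l,\alpha}\leq|f_{\iota_{0}}|_{j,l,\alpha}+\varepsilon<\infty$ and $f_{\iota}\to f$ in $\mathcal{CV}^{k}(\Omega,E)$.

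For $\mathcal{CV}^{k}_{0}(\Omega,E)$ it suffices to show that it is closed in $\mathcal{CV}^{k}(\Omega,E)$, since a closed subspace of a complete locally convex space is complete. Given $f\in\mathcal{CV}^{k}(\Omega,E)$ as a limit of a net in $\mathcal{CV}^{k}_{0}(\Omega,E)$ together with data $\varepsilon,j,l,\alpha$, I would first pick a net member $f_{\iota}$ with $|f_{\iota}-f|_{j,l,\alpha}<\varepsilon/2$ and then a compact $K\subset\Omega$ with $|f_{\iota}|_{\Omega\setminus K,j,l,\alpha}<\varepsilon/2$; the triangle inequality applied on $\Omega\setminus K$ then gives $|f|_{\Omega\setminus K,j,l,\alpha}<\varepsilon$. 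The Fr\'{e}chet statement is immediate afterwards, since a countable $J$ together with $\langle k\rangle\subset\mathbb{N}_{0}$ makes the defining seminorm system $(|\cdot|_{j,l,\alpha})$ countable, so metrisability combined with completeness gives a Fr\'{e}chet space.

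The only delicate ingredient I rely on is the completeness of $\mathcal{C}^{k}(\Omega,E)$ for complete $E$, which ultimately rests on the classical ``interchange of limit and partial derivative'' argument via the coordinate-wise fundamental theorem of calculus; this extends to the vector-valued case precisely because $E$ is complete. Everything else reduces to triangle-inequality manipulations with the weighted seminorms and the scalar factor $c^{-1}$ provided by the locally-bounded-away-from-zero condition.
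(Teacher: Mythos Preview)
Your proof is correct and follows essentially the same route as the paper: both use the locally-bounded-away-from-zero hypothesis to get continuity of the inclusion into $\mathcal{C}^{k}(\Omega,E)$, invoke completeness of the latter to produce a limit $f$, and then pass to the limit pointwise inside the Cauchy estimate to recover $|f_{\iota}-f|_{j,l,\alpha}\leq\varepsilon$. The only cosmetic difference is that you spell out the closedness of $\mathcal{CV}^{k}_{0}(\Omega,E)$ via a triangle-inequality argument, whereas the paper simply asserts it.
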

\begin{proof}
Let $(f_{\tau})_{\tau\in\mathcal{T}}$ be a Cauchy net in 
$\mathcal{CV}^{k}(\Omega,E)$. The space $\mathcal{C}^{k}(\Omega,E)$ 
equipped with the usual system of seminorms $(q_{K,l,\alpha})$ given in 
\eqref{cv-complete.0} is complete by \cite[Proposition 44.1, p.\ 446]{Treves}. 
Let $K\subset\Omega$ compact, $l\in\langle k \rangle$ and $\alpha\in \mathfrak{A}$. 
Then there is $j\in J$ such that 
\[
q_{K,l,\alpha}(f)\leq \sup_{x \in K}\nu_{j,l}(x)^{-1}|f |_{j,l,\alpha}
=\bigl(\inf_{x \in K}\nu_{j,l}(x)\bigr)^{-1}
|f|_{j,l,\alpha},\quad f\in\mathcal{CV}^{k}(\Omega, E),
\]
since $\mathcal{V}^{k}$ is locally bounded away from zero implying that
the inclusion $\mathcal{CV}^{k}(\Omega, E)\hookrightarrow\mathcal{C}^{k}(\Omega,E)$ is continuous. 
Thus $(f_{\tau})$ is a Cauchy net in 
$\mathcal{C}^{k}(\Omega,E)$ as well and has a limit $f$ in this space due to the completeness. 
Let $j\in J$, $l\in\langle k \rangle$, $\alpha\in \mathfrak{A}$ and $\varepsilon>0$. 
As this convergence implies pointwise convergence, we have that
for all $x\in \Omega$ and $\beta\in \mathbb{N}_{0}^{d}$, $|\beta|\leq l$,
there exists $\tau_{j,l,\beta, x}\in\mathcal{T}$ such that for all $\tau\geq\tau_{j,l,\beta,x}$
\begin{equation}\label{cv-complete.1}
p_{\alpha}\bigl((\partial^{\beta})^{E}f_{\tau}(x)-(\partial^{\beta})^{E}f(x)\bigr)<\frac{\varepsilon}{2\nu_{j,l}(x)}
\end{equation}	 
if $\nu_{j,l}(x)>0$.
Furthermore, there exists $\tau_{0}\in\mathcal{T}$ such that for all $\tau,\mu\geq \tau_{0}$
\begin{equation}\label{cv-complete.2}
|f_{\tau}-f_{\mu}|_{j,l,\alpha}<\frac{\varepsilon}{2}
\end{equation}
by assumption. Hence we get for all $\tau\geq\tau_{0}$ by choosing $\mu\geq\tau_{j,l,\beta,x},\tau_{0}$
\begin{flalign*}
&\quad\: p_{\alpha}\bigl((\partial^{\beta})^{E}f(x)\bigr)\nu_{j,l}(x)
-p_{\alpha}\bigl((\partial^{\beta})^{E}f_{\tau}(x)\bigr)\nu_{j,l}(x)\\
&\leq p_{\alpha}\bigl((\partial^{\beta})^{E}f_{\tau}(x)-(\partial^{\beta})^{E}f(x)\bigr)\nu_{j,l}(x)\\
&\leq p_{\alpha}\bigl((\partial^{\beta})^{E}f_{\tau}(x)-(\partial^{\beta})^{E}f_{\mu}(x)\bigr)\nu_{j,l}(x)+
p_{\alpha}\bigl((\partial^{\beta})^{E}f_{\mu}(x)-(\partial^{\beta})^{E}f(x)\bigr)\nu_{j,l}(x)\\
&\;{\mathop{<}\limits_{\mathclap{\eqref{cv-complete.1}}}}\; \sup_{z\in \Omega}
p_{\alpha}\bigl((\partial^{\beta})^{E}f_{\tau}(z)-(\partial^{\beta})^{E}f_{\mu}(z)\bigr)
\nu_{j,l}(z)+\frac{\varepsilon}{2} \\
& \leq \sup_{\substack{z \in \Omega\\ \gamma \in \mathbb{N}_{0}^{d}, \, |\gamma| \leq l}}
p_{\alpha}\bigl((\partial^{\gamma})^{E}f_{\tau}(z)-(\partial^{\gamma})^{E}f_{\mu}(z)\bigr)\nu_{j,l}(z)
+\frac{\varepsilon}{2}\\
&=|f_{\tau}-f_{\mu}|_{j,l,\alpha}+\frac{\varepsilon}{2}\\
&\;{\mathop{<}\limits_{\mathclap{\eqref{cv-complete.2}}}}\;\varepsilon
\end{flalign*}
if $\nu_{j,l}(x)>0$. We deduce that for all $\tau\geq\tau_{0}$
\begin{flalign*}
&\quad\: p_{\alpha}\bigl((\partial^{\beta})^{E}f(x)\bigr)\nu_{j,l}(x)
-p_{\alpha}\bigl((\partial^{\beta})^{E}f_{\tau}(x)\bigr)\nu_{j,l}(x)\\
&\leq p_{\alpha}\bigl((\partial^{\beta})^{E}f_{\tau}(x)-(\partial^{\beta})^{E}f(x)\bigr)\nu_{j,l}(x)
<\varepsilon
\end{flalign*}
if $\nu_{j,l}(x)>0$. If $\nu_{j,l}(x)=0$, then this estimate is also fulfilled 
and so $|f_{\tau}-f|_{j,l,\alpha}\leq\varepsilon$ 
as well as $|f|_{j,l,\alpha}\leq\varepsilon+|f_{\tau}|_{j,l,\alpha}$ for all $\tau\geq \tau_{0}$. 
This means that $f\in\mathcal{CV}^{k}(\Omega, E)$ and that $(f_{\tau})$ converges to $f$ 
in $\mathcal{CV}^{k}(\Omega, E)$. Therefore $\mathcal{CV}^{k}(\Omega, E)$ is complete 
and $\mathcal{CV}^{k}_{0}(\Omega,E)$ as well because it is a closed 
subspace of the complete space $\mathcal{CV}^{k}(\Omega,E)$.
\end{proof}

For $k\in\mathbb{N}_{0,\infty}$ and locally compact Hausdorff $\Omega$ ($k=0$) 
or open $\Omega\subset\mathbb{R}^{d}$ ($k>0$) 
we define $\mathcal{CV}^{k}_{c}(\Omega,E)$ to be the subspace of $\mathcal{CV}^{k}(\Omega,E)$ of 
functions with compact support. Obviously we 
have $\mathcal{CV}^{k}_{c}(\Omega,E)\subset\mathcal{CV}^{k}_{0}(\Omega,E)$ 
and $\mathcal{CV}^{k}_{c}(\Omega,E)\subset\mathcal{C}^{k}_{c}(\Omega,E)$.  
On the other hand, the space $\mathcal{C}^{k}_{c}(\Omega,E)$ is a linear subspace of $\mathcal{CV}^{k}_{c}(\Omega,E)$ 
if the family of weights $\mathcal{V}^{k}$ fulfils the definition of local boundedness. 

\begin{defn}[{locally bounded}]\label{def:loc.bound}
Let $\Omega$ be a locally compact Hausdorff space and $k\in\mathbb{N}_{0,\infty}$. 
A family of weights $\mathcal{V}^{k}$ is called locally bounded on $\Omega$ if
\[
 \forall\;K\subset\Omega\;\text{compact},\,j\in J,\,l\in\langle k \rangle:\;
 \sup_{x\in K}\nu_{j,l}(x)<\infty.
\]
\end{defn}

Indeed, if $f\in\mathcal{C}^{k}_{c}(\Omega,E)$, then we have for $K:=\operatorname{supp}f$ 
\begin{align*}
|f|_{j,l,\alpha}&=\sup_{\substack{x\in K\\ \beta\in\mathbb{N}_{0}^{d},|\beta|\leq l}}
p_{\alpha}\bigl((\partial^{\beta})^{E}f(x)\bigr)\nu_{j,l}(x)\\
&\leq\bigl(\sup_{\substack{z\in K\\ \beta\in\mathbb{N}_{0}^{d},|\beta|\leq l}}
p_{\alpha}\bigl((\partial^{\beta})^{E}f(z)\bigr)\bigr)
\sup_{x\in K}\nu_{j,l}(x)
\end{align*}
for all $j\in J$, $l\in\langle k \rangle$ and $\alpha\in\mathfrak{A}$. Hence we have:

\begin{rem}\label{rem:compact.supp}
Let $E$ be an lcHs and  $k\in\mathbb{N}_{0,\infty}$. If $\mathcal{V}^{k}$ is a family of locally bounded weights, 
then $\mathcal{C}^{k}_{c}(\Omega,E)=\mathcal{CV}^{k}_{c}(\Omega,E)$ algebraically.
\end{rem}

Next, we formulate a sufficient criterion for the density of $\mathcal{C}^{k}_{c}(\Omega,E)$ 
in $\mathcal{CV}^{k}_{0}(\Omega,E)$ for $k\in\mathbb{N}_{0,\infty}$, $\Omega\subset\mathbb{R}^{d}$ open 
and locally bounded $\mathcal{V}^{k}$. 

\begin{defn}[{cut-off criterion}]\label{def:cut-off}
Let $E$ be an lcHs, $k\in\mathbb{N}_{0,\infty}$, $\Omega\subset\mathbb{R}^{d}$ open 
and $\mathcal{V}^{k}$ be a family of weights on $\Omega$. We say that $\mathcal{CV}^{k}_{0}(\Omega,E)$ satisfies 
the cut-off criterion if 
\begin{align*}
&\forall\;f\in \mathcal{CV}^{k}_{0}(\Omega,E),\,j\in J,\,l\in\langle k \rangle,\,\alpha\in\mathfrak{A}\;\exists\;
\delta>0\;\forall\;\varepsilon>0\;\exists\;K\subset \Omega\;\text{compact}:\\
&\phantom{\forall\;f\in \mathcal{CV}^{k}_{0}(\Omega,E),\,}
\bigl(K+\overline{\mathbb{B}_{\delta}(0)}\,\bigr)\subset \Omega\quad \text{and}\quad 
|f|_{\Omega\setminus K,j,l,\alpha}<\varepsilon.
\end{align*}
\end{defn}

\begin{rem}\label{rem:whole.cut-off}
If $\Omega=\mathbb{R}^{d}$, then the cut-off criterion is satisfied for any $\delta>0$.
\end{rem}

\begin{exa}\label{ex:cut-off}
Let $E$ be an lcHs, $k\in\mathbb{N}_{0,\infty}$ and $\Omega\subset\mathbb{R}^{d}$ open. 
The space $\mathcal{C}^{k}(\Omega,E)$ with the usual topology of uniform convergence of 
all partial derivatives up to order $k$ on compact subsets of $\Omega$ and the Schwartz space 
$\mathcal{S}(\mathbb{R}^{d},E)$ fulfil the cut-off criterion.
\end{exa}
\begin{proof}
For the Schwartz space this follows directly from \prettyref{ex:standard_diff_spaces} b) and 
\prettyref{rem:whole.cut-off}. 
By \prettyref{ex:standard_diff_spaces} a) we have $\mathcal{C}^{k}(\Omega,E)=\mathcal{CW}^{k}_{0}(\Omega,E)$ with
$\mathcal{W}^{k}:=\{\nu_{j,l}:=\chi_{\Omega_{j}}\;|\;j\in\mathbb{N},\,l\in\langle k \rangle\}$ 
where $(\Omega_{j})_{j\in\mathbb{N}}$ is a compact exhaustion of $\Omega$.
Choosing $K:=\Omega_{j}$ and 
$\delta:=\inf\{|z-x|\;|\;z\in\partial\Omega_{j},\,x\in\partial\Omega_{j+1}\}>0$ for $j\in\mathbb{N}$, 
we note that the cut-off criterion is fulfilled.
\end{proof}

The proof of the density given below uses cut-off functions and the additional $\delta>0$ independent of $\varepsilon>0$ 
allows us to choose a suitable cut-off function whose derivatives can be estimated independently of $\varepsilon$.
But first we recall the following definitions since we need the product rule. Let $\gamma,\beta\in\mathbb{N}^{d}_{0}$. 
We write $\gamma\leq\beta$ if $\gamma_{n}\leq\beta_{n}$ for all $1\leq n\leq d$, and define
\[
\dbinom{\beta}{\gamma}:=\prod_{n=1}^{d}\dbinom{\beta_{n}}{\gamma_{n}}
\]
if $\gamma\leq\beta$ where the right-hand side is defined by ordinary binomial coefficients. 
Now, we can phrase the product rule whose proof follows by induction (just adapt the proof for scalar-valued functions).

\begin{prop}[{product rule}]
Let $E$ be an lcHs, $k\in\mathbb{N}_{0,\infty}$, $\Omega\subset\mathbb{R}^{d}$ open, 
$f\in\mathcal{C}^{k}(\Omega,E)$ and $g\in\mathcal{C}^{k}(\Omega)$. 
Then $gf\in\mathcal{C}^{k}(\Omega,E)$ and 
\[
(\partial^{\beta})^{E}(gf)(x)
=\sum_{\gamma\leq \beta}{\dbinom{\beta}{\gamma}(\partial^{\beta-\gamma})^{\mathbb{K}}g(x)}(\partial^{\gamma})^{E}f(x),
\quad x\in \Omega,\; \beta\in\mathbb{N}_{0}^{d},\,|\beta|\leq k.
\]
\end{prop}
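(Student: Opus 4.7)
The plan is to proceed by induction on $|\beta|$, with the nontrivial base case being $|\beta|=1$. For $\beta=0$ there is nothing to show. For $\beta=e_{n}$, I would work directly from the definition of the partial derivative and establish the standard scalar-times-vector product rule
\[
(\partial^{e_{n}})^{E}(gf)(x)=\bigl((\partial^{e_{n}})^{\mathbb{K}}g(x)\bigr)f(x)+g(x)(\partial^{e_{n}})^{E}f(x),\quad x\in\Omega,
\]
by splitting the difference quotient as
\[
\frac{g(x+he_{n})f(x+he_{n})-g(x)f(x)}{h}=g(x+he_{n})\frac{f(x+he_{n})-f(x)}{h}+\frac{g(x+he_{n})-g(x)}{h}f(x),
\]
and for each seminorm $p_{\alpha}$ applying the triangle inequality to compare with the desired limit. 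The first term is controlled using that $g$ is bounded on a neighbourhood of $x$ (continuity) and that $(f(x+he_{n})-f(x))/h\to (\partial^{e_{n}})^{E}f(x)$ in $E$; the second term is a scalar-valued difference quotient converging to $(\partial^{e_{n}})^{\mathbb{K}}g(x)$ multiplied by the fixed vector $f(x)\in E$. Continuity of the resulting partial derivative $(\partial^{e_{n}})^{E}(gf)$ on $\Omega$ follows because the two summands are continuous as products/sums of continuous maps in the lcHs $E$. In particular, $gf\in\mathcal{C}^{1}(\Omega,E)$.

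For the inductive step, assume the formula holds for all multiindices of order at most $m$, and let $|\beta|=m+1\leq k$. Write $\beta=\beta'+e_{n}$ with $|\beta'|=m$, apply the induction hypothesis to $(\partial^{\beta'})^{E}(gf)$, and then apply $(\partial^{e_{n}})^{E}$ termwise using the base case together with linearity (each summand is a product of a $\mathcal{C}^{k-|\gamma|}$ scalar factor and a $\mathcal{C}^{k-|\beta'-\gamma|}$ vector factor, both at least $\mathcal{C}^{1}$ since $|\beta|\leq k$). This yields
\[
(\partial^{\beta})^{E}(gf)=\sum_{\gamma\leq\beta'}\binom{\beta'}{\gamma}\Bigl[(\partial^{\beta-\gamma})^{\mathbb{K}}g\cdot(\partial^{\gamma})^{E}f+(\partial^{\beta'-\gamma})^{\mathbb{K}}g\cdot(\partial^{\gamma+e_{n}})^{E}f\Bigr].
\]
Reindexing the second sum by $\gamma\mapsto\gamma-e_{n}$ and combining coefficients via the multiindex Pascal identity $\binom{\beta'}{\gamma}+\binom{\beta'}{\gamma-e_{n}}=\binom{\beta'+e_{n}}{\gamma}=\binom{\beta}{\gamma}$ collapses everything into the desired formula $\sum_{\gamma\leq\beta}\binom{\beta}{\gamma}(\partial^{\beta-\gamma})^{\mathbb{K}}g\cdot(\partial^{\gamma})^{E}f$.

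The main obstacle is purely notational rather than analytic: once the base case is set up cleanly, the inductive step reduces to verifying the multiindex Pascal identity, which in turn reduces to the one-dimensional identity applied in the $n$-th coordinate (all other coordinates of $\beta$ and $\beta'$ agree, so their binomial factors cancel on both sides). The analytic content lives entirely in the $|\beta|=1$ step, and there the only subtlety is that all limits must be taken in the seminorm topology of $E$; this is handled by the three-term triangle inequality sketched above, together with local boundedness of the scalar factor $g$ near $x$.
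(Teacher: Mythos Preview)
Your proposal is correct and matches the paper's approach exactly: the paper simply states that the proof ``follows by induction (just adapt the proof for scalar-valued functions)'' without giving any details, and what you have written is precisely that adaptation. The base case via the split difference quotient and the inductive step via the multiindex Pascal identity are the standard ingredients, and there is nothing to add.
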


\begin{lem}\label{lem:comp.supp.dense}
Let $E$ be an lcHs, $k\in\mathbb{N}_{0,\infty}$ and $\mathcal{V}^{k}$ be a family of locally bounded weights 
on an open set $\Omega\subset\mathbb{R}^{d}$. 
If $\mathcal{CV}^{k}_{0}(\Omega,E)$ satisfies the cut-off criterion, then
the space $\mathcal{C}^{k}_{c}(\Omega,E)$ is dense in $\mathcal{CV}^{k}_{0}(\Omega,E)$.
\end{lem}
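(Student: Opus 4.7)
The plan is to approximate $f\in\mathcal{CV}^{k}_{0}(\Omega,E)$ by cutting it off with a smooth function $\varphi$ constructed so that its derivatives are controlled in terms of the ``safety distance'' $\delta$ provided by the cut-off criterion, and then to exploit the fact that this $\delta$ does not depend on $\varepsilon$. Fix $j\in J$, $l\in\langle k\rangle$, $\alpha\in\mathfrak{A}$ and $\varepsilon>0$. By the cut-off criterion, choose $\delta=\delta(f,j,l,\alpha)>0$ once and for all. Note that $\varphi f\in\mathcal{C}^{k}_{c}(\Omega,E)$ sits inside $\mathcal{CV}^{k}_{0}(\Omega,E)$ by \prettyref{rem:compact.supp} since $\mathcal{V}^{k}$ is locally bounded, so the approximants land in the correct space.

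Next, I would produce a family of cut-off functions. For any compact $K\subset\Omega$ with $K+\overline{\mathbb{B}_{\delta}(0)}\subset\Omega$, set
\[
\varphi_{K}:=\chi_{K+\overline{\mathbb{B}_{\delta/2}(0)}}\ast\rho_{\delta},
\]
where $\rho_{\delta}\in\mathcal{C}^{\infty}_{c}(\mathbb{R}^{d})$ is a standard mollifier with $\rho_{\delta}\geq 0$, $\operatorname{supp}\rho_{\delta}\subset\mathbb{B}_{\delta/3}(0)$ and $\int\rho_{\delta}=1$. Then $\varphi_{K}\in\mathcal{C}^{\infty}_{c}(\Omega)$, $\varphi_{K}\equiv 1$ on a neighbourhood of $K$, $\operatorname{supp}\varphi_{K}\subset K+\overline{\mathbb{B}_{\delta}(0)}$, and Young's inequality gives
\[
\bigl\|(\partial^{\eta})^{\mathbb{K}}\varphi_{K}\bigr\|_{\infty}\leq \bigl\|(\partial^{\eta})^{\mathbb{K}}\rho_{\delta}\bigr\|_{1}=:C_{\eta},\quad \eta\in\mathbb{N}_{0}^{d},
\]
with constants $C_{\eta}$ depending only on $\delta$, not on $K$. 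Crucially, these bounds are uniform in the compact set $K$ that will later be chosen in function of $\varepsilon$. Set $M:=\sum_{\eta\in\mathbb{N}_{0}^{d},\,|\eta|\leq l}\binom{l}{|\eta|}C_{\eta}+1$.

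Now I would apply the cut-off criterion a second time, this time with the tolerance $\varepsilon/M$, to obtain a compact $K\subset\Omega$ with $K+\overline{\mathbb{B}_{\delta}(0)}\subset\Omega$ and $|f|_{\Omega\setminus K,j,l,\alpha}<\varepsilon/M$. Put $\varphi:=\varphi_{K}$ and $g:=\varphi f\in\mathcal{C}^{k}_{c}(\Omega,E)$. Since $\varphi\equiv 1$ on a neighbourhood of $K$, every derivative $(\partial^{\beta})^{E}\bigl((1-\varphi)f\bigr)$ vanishes on $K$, so the seminorm $|f-g|_{j,l,\alpha}$ is a supremum over $x\in\Omega\setminus K$ and $|\beta|\leq l$. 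Applying the product rule to $(1-\varphi)f$ on $\Omega\setminus K$ yields
\[
p_{\alpha}\bigl((\partial^{\beta})^{E}(f-g)(x)\bigr)\nu_{j,l}(x)\leq \sum_{\gamma\leq\beta}\binom{\beta}{\gamma}\bigl|(\partial^{\beta-\gamma})^{\mathbb{K}}(1-\varphi)(x)\bigr|\,p_{\alpha}\bigl((\partial^{\gamma})^{E}f(x)\bigr)\nu_{j,l}(x).
\]
Since every $|\gamma|\leq|\beta|\leq l$, each factor $p_{\alpha}\bigl((\partial^{\gamma})^{E}f(x)\bigr)\nu_{j,l}(x)$ is bounded by $|f|_{\Omega\setminus K,j,l,\alpha}$, while the derivative factors are bounded by $C_{\beta-\gamma}$ (for $\gamma\neq\beta$) or $1$ (for $\gamma=\beta$). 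Summing gives $|f-g|_{j,l,\alpha}\leq M\cdot |f|_{\Omega\setminus K,j,l,\alpha}<\varepsilon$, which is the desired approximation.

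The essential point, and the only genuinely delicate step, is that the cut-off criterion is formulated with the quantifier order $\exists\delta\,\forall\varepsilon$: the safety distance must be fixed \emph{before} $K$ is chosen. This is exactly what lets us produce a single family $\{\varphi_{K}\}$ with a uniform derivative bound $M$ that is valid independently of how tight the approximation needs to be; the rest of the argument is standard product-rule bookkeeping. The local boundedness of $\mathcal{V}^{k}$ plays only the (equally essential) role of ensuring that $g=\varphi f$ actually belongs to $\mathcal{CV}^{k}_{0}(\Omega,E)$ via \prettyref{rem:compact.supp}.
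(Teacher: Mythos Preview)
Your proof is correct and follows essentially the same route as the paper: fix $\delta$ from the cut-off criterion, build a smooth cut-off $\varphi$ with derivative bounds depending only on $\delta$, and apply the product rule on $\Omega\setminus K$ to bound $|f-\varphi f|_{j,l,\alpha}$ by a constant (independent of $\varepsilon$) times $|f|_{\Omega\setminus K,j,l,\alpha}$. The only cosmetic differences are that the paper quotes H\"ormander's cut-off lemma for $\psi$ while you mollify an indicator explicitly, and that the paper chooses $K$ for tolerance $\varepsilon$ and ends with $(1+C_{l,\delta})\varepsilon$, whereas you pre-divide by $M$; neither affects the substance.
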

\begin{proof}
 The local boundedness of $\mathcal{V}^{k}$ yields that $\mathcal{C}^{k}_{c}(\Omega,E)$ 
 is a linear subspace of $\mathcal{CV}^{k}_{0}(\Omega,E)$ 
 by \prettyref{rem:compact.supp} which we equip with the induced topology.
 Let $f\in\mathcal{CV}^{k}_{0}(\Omega,E)$, $j\in J$, $l\in\langle k \rangle$ and $\alpha\in\mathfrak{A}$. 
 Due to the cut-off criterion there is $\delta>0$ such that for $\varepsilon>0$ 
 there is $K\subset\Omega$ compact with $(K+\overline{\mathbb{B}_{\delta}(0)}\,\bigr)\subset \Omega$ 
 and $|f|_{\Omega\setminus K,j,l,\alpha}<\varepsilon$. 
 We choose a cut-off function $\psi\in\mathcal{C}^{\infty}_{c}(\Omega)$ 
 with $0\leq\psi\leq 1$ so that $\psi=1$ in a neighbourhood of $K$ and 
 \[
 \bigl|(\partial^{\beta})^{\mathbb{K}}\psi\bigr|\leq C_{\beta}\delta^{-|\beta|}
 \]
 on $\Omega$ for all $\beta\in\mathbb{N}_{0}^{d}$ where $C_{\beta}>0$ only depends on $\beta$ 
 (see \cite[Theorem 1.4.1, p.\ 25]{H1}). 
 We set $K_{0}:=\operatorname{supp}\psi$, note that $\psi f\in\mathcal{C}^{k}_{c}(\Omega,E)$ by the product rule and 
 \begin{flalign*}
 &\quad |f-\psi f|_{j,l,\alpha}\\
 &=\sup_{\substack{x\in\Omega\setminus K\\ \beta\in\mathbb{N}_{0}^{d},|\beta|\leq l}}
 p_{\alpha}\bigl((\partial^{\beta})^{E}(f-\psi f)(x)\bigr)\nu_{j,l}(x)\\
  &\leq \sup_{\substack{x\in\Omega\setminus K\\ \beta\in\mathbb{N}_{0}^{d},|\beta|\leq l}}
  p_{\alpha}\bigl((\partial^{\beta})^{E}f(x)\bigr)\nu_{j,l}(x) 
  + \sup_{\substack{x\in\Omega\setminus K\\ \beta\in\mathbb{N}_{0}^{d},|\beta|\leq l}}
  p_{\alpha}\bigl((\partial^{\beta})^{E}(\psi f)(x)\bigr)\nu_{j,l}(x) \\
  &=|f|_{\Omega\setminus K,j,l,\alpha}+\sup_{\substack{x\in(\Omega\setminus K)\cap K_{0}\\ \beta\in\mathbb{N}_{0}^{d},|\beta|\leq l}}
  p_{\alpha}\bigl(\sum_{\gamma\leq \beta}{\dbinom{\beta}{\gamma}(\partial^{\beta-\gamma})^{\mathbb{K}}
   \psi(x)}(\partial^{\gamma})^{E}f(x)\bigr)\nu_{j,l}(x)\\
  &\leq |f|_{\Omega\setminus K,j,l,\alpha}
  +\sup_{\substack{z\in K_{0}\\ \beta\in\mathbb{N}_{0}^{d},|\beta|\leq l}}
  \sum_{\gamma\leq \beta}{\dbinom{\beta}{\gamma}\bigl|(\partial^{\beta-\gamma})^{\mathbb{K}}
  \psi(z)}\bigr|
  \Bigl(\sup_{\substack{x\in\Omega\setminus K\\ \tau\in\mathbb{N}_{0}^{d},|\tau|\leq l}}
   p_{\alpha}\bigl((\partial^{\tau})^{E}f(x)\bigr)\nu_{j,l}(x)\Bigr)\\
  &\leq |f|_{\Omega\setminus K,j,l,\alpha}+\underbrace{\sup_{\beta\in\mathbb{N}_{0}^{d},|\beta|\leq l}
  \sum_{\gamma\leq \beta}{\dbinom{\beta}{\gamma}C_{\beta-\gamma}\delta^{-|\beta-\gamma|}}}_{=:C_{l,\delta}<\infty}
   |f|_{\Omega\setminus K,j,l,\alpha}\\
  &=(1+C_{l,\delta})|f|_{\Omega\setminus K,j,l,\alpha}<(1+C_{l,\delta})\varepsilon.
 \end{flalign*}
 The independence of $C_{l,\delta}$ from $\varepsilon$ implies the statement.
\end{proof}

We complete this section by pointing out the link between our question on finite dimensional approximation 
and the tensor product. If $\mathcal{V}^{k}$ is locally bounded away from zero, 
there is a nice relation between our spaces of vector-valued functions 
and the $\varepsilon$-product which uses that the point-evaluation functionals $\delta_{x}\colon f\mapsto f(x)$ are continuous 
on $\mathcal{CV}^{k}(\Omega)$ by our definition of a weight.

\begin{prop}\label{prop:eps-prod.embed}
Let $E$ be an lcHs, 
$k\in\mathbb{N}_{0,\infty}$, $\mathcal{V}^{k}$ be a family of weights 
which is locally bounded away from zero on a locally compact Hausdorff space $\Omega$ ($k=0$) or 
an open set $\Omega\subset\mathbb{R}^{d}$ ($k>0$).
\begin{enumerate}
\item [a)] In addition, let $\mathcal{CV}^{k}_{0}(\Omega)$ be barrelled if $k>0$. Then
\[
 S_{\mathcal{CV}^{k}_{0}(\Omega)}\colon \mathcal{CV}^{k}_{0}(\Omega)\varepsilon E\to\mathcal{CV}^{k}_{0}(\Omega,E),\; 
 u\longmapsto [x\mapsto u(\delta_{x})],
\]
is an isomorphism into, i.e.\ an isomorphism to its range.
\item [b)] In addition, let $\mathcal{CV}^{k}(\Omega)$ be barrelled if $k>0$. Then
\[
 S_{\mathcal{CV}^{k}(\Omega)}\colon \mathcal{CV}^{k}(\Omega)\varepsilon E\to\mathcal{CV}^{k}(\Omega,E),\;
 u\longmapsto [x\mapsto u(\delta_{x})],
\]
is an isomorphism into.
\end{enumerate}
\end{prop}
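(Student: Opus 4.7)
The plan is to verify that $S$ --- written for $S_{F}$ with $F$ being $\mathcal{CV}^{k}_{0}(\Omega)$ in case a) and $\mathcal{CV}^{k}(\Omega)$ in case b) --- is a well-defined, injective topological embedding into the corresponding space of $E$-valued functions. The starting point is that local boundedness away from zero makes every functional $\partial^{\beta}\delta_{x}\colon f\mapsto (\partial^{\beta} f)(x)$ with $|\beta|\leq k$ continuous on $F$: picking $j\in J$ with $\nu_{j,|\beta|}(x)>0$ gives $|(\partial^{\beta} f)(x)|\leq \nu_{j,|\beta|}(x)^{-1}|f|_{j,|\beta|,1}$, so the expression $u(\partial^{\beta}\delta_{x})$ is meaningful for $u\in F\varepsilon E$.

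The principal step is to establish the differentiation rule $(\partial^{\beta})^{E}S(u)(x)=u(\partial^{\beta}\delta_{x})$ by induction on $|\beta|$. For the induction step I would show that $h^{-1}(\delta_{x+he_{n}}-\delta_{x})\to \partial^{e_{n}}\delta_{x}$ in $F_{\kappa}'$ as $h\to 0$. The vector-valued mean value theorem, applied through $e'\circ f$ for $f\in F$ and $e'\in E'$, delivers both pointwise (weak*) convergence and pointwise boundedness on $F$ of the family of difference quotients as $h$ varies in a fixed small interval. For $k>0$, the barrelledness hypothesis on $F$ then invokes Banach--Steinhaus to promote pointwise boundedness to equicontinuity; since the weak* and $\kappa$ topologies coincide on equicontinuous subsets of $F'$, pointwise convergence becomes $\kappa$-convergence, and the $\kappa$-continuity of $u$ yields the claimed derivative. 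For $k=0$ no differentiation is involved, and one needs only continuity of $x\mapsto\delta_{x}\colon\Omega\to F_{\kappa}'$, which follows from the same equicontinuity principle applied locally around each $x$. Continuity of $(\partial^{\beta})^{E}S(u)$ in $x$ is then handled analogously, applied to $x\mapsto\partial^{\beta}\delta_{x}$.

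To prove that $S$ is a topological embedding, set
\[
B_{j,l}:=\{\nu_{j,l}(x)\partial^{\beta}\delta_{x}\,:\,x\in\Omega,\;|\beta|\leq l\}\subset F'.
\]
Since $\sup_{\varphi\in B_{j,l}}|\varphi(f)|=|f|_{j,l,1}$ for every $f\in F$, the set $B_{j,l}$ is equicontinuous, and
\[
|S(u)|_{j,l,\alpha}=\sup_{\varphi\in B_{j,l}}p_{\alpha}(u(\varphi))
\]
is precisely the $F\varepsilon E$-seminorm associated with $B_{j,l}$, so $S$ is continuous. Conversely, any continuous seminorm on $F\varepsilon E$ has the form $u\mapsto\sup_{\varphi\in H}p_{\alpha}(u(\varphi))$ for some equicontinuous $H\subset F'$; the bipolar theorem (combined with the fact that the $\sigma$- and $\kappa$-closures coincide on equicontinuous sets) places $H$ inside $C\cdot\overline{\mathrm{aco}(B_{j,l})}^{\kappa}$ for suitable $C>0$ and $j,l$, after which $\kappa$-continuity of $u$ and convexity of $p_{\alpha}$ yield the bound by $C|S(u)|_{j,l,\alpha}$. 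Injectivity follows from the same density: $S(u)=0$ forces $u$ to vanish on every $\partial^{\beta}\delta_{x}$ and hence on the $\kappa$-dense subspace $\mathrm{span}\bigcup_{j,l}B_{j,l}$ of $F'$.

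Finally, for part a), to upgrade $S(u)\in\mathcal{CV}^{k}(\Omega,E)$ to $S(u)\in\mathcal{CV}^{k}_{0}(\Omega,E)$, I would argue by contradiction: if some sequence $(x_{n})$ escapes every compact subset of $\Omega$ and $|\beta_{n}|\leq l$ with $p_{\alpha}(u(\nu_{j,l}(x_{n})\partial^{\beta_{n}}\delta_{x_{n}}))\geq\varepsilon$, pass to a subsequence with $\beta_{n}\equiv\beta$; then for every $f\in\mathcal{CV}^{k}_{0}(\Omega)$ the vanishing-at-infinity condition gives $\nu_{j,l}(x_{n})\partial^{\beta}\delta_{x_{n}}\to 0$ weak*, hence (by equicontinuity) in $F_{\kappa}'$, contradicting continuity of $u$. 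The main obstacle throughout is the derivative formula: interchanging differentiation with $u$ rests on $\kappa$-convergence of difference quotients, which is precisely where the barrelledness hypothesis for $k>0$ is used.
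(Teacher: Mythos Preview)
Your argument is correct and is essentially the standard direct proof; the paper itself does not give a self-contained argument but defers everything to results from \cite{kruse2017} (continuity of $S(u)$ from Proposition~4.1 and Lemma~4.2(i) there, differentiability from Proposition~4.12, the vanishing-at-infinity property from Proposition~4.13, and the embedding statement from Theorem~3.9). What you have written is very likely a faithful unpacking of those cited results: the key mechanism---equicontinuity of the sets $B_{j,l}$, Banach--Steinhaus on a barrelled $F$ to pass from pointwise to $\kappa$-convergence of difference quotients, and the bipolar argument identifying the $\varepsilon$-product seminorms with the weighted seminorms---is exactly what one expects.

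Two small remarks. First, the mention of ``$e'\circ f$ for $f\in F$ and $e'\in E'$'' is a slip: $f\in F$ is already scalar-valued, so the ordinary mean value theorem applies directly to $f$ and yields pointwise boundedness of the difference quotients $h^{-1}(\delta_{x+he_n}-\delta_x)$ on $F$ without any reference to $E'$. Second, in the vanishing-at-infinity step for part~a) your contradiction argument produces a \emph{sequence} $(x_n)$ escaping every compact set; this tacitly uses $\sigma$-compactness of $\Omega$, which is automatic for open $\Omega\subset\mathbb{R}^d$ (the case $k>0$) but not for a general locally compact Hausdorff space (the case $k=0$). There one should either run the same argument with the net $(x_K)_{K}$ indexed by compacts, or argue directly that $p_\alpha\circ u$ is weak*-continuous on the equicontinuous set $B_{j,l}$ and use that the weak*-neighbourhood filter of $0$ in $B_{j,l}$ is generated by the condition of lying outside suitable compacts.
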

\begin{proof}
As a simplification we omit the index of $S$.
Let $u\in \mathcal{CV}^{k}_{0}(\Omega)\varepsilon E$ resp.\ $\mathcal{CV}^{k}(\Omega)\varepsilon E$. 
The continuity of $S(u)$ is a consequence of \cite[4.1 Proposition, p.\ 18]{kruse2017} and 
\cite[4.2 Lemma (i), p.\ 19]{kruse2017} since $\mathcal{V}^{k}$ is locally bounded away from zero.
If $k>0$, then the continuous partial differentiability of $S(u)$ up to order $k$ follows
from \cite[4.12 Proposition, p.\ 22]{kruse2017}
as $\mathcal{CV}^{k}_{0}(\Omega)$ resp.\ $\mathcal{CV}^{k}(\Omega)$ is barrelled
and $\mathcal{V}^{k}$ locally bounded away from zero. 
If $u\in\mathcal{CV}^{k}_{0}(\Omega)\varepsilon E$, then $S(u)$ vanishes together with all its derivatives 
when weigthed at infinity by \cite[4.13 Proposition, p.\ 23]{kruse2017}. 
Thanks to these observations \cite[3.9 Theorem, p.\ 9]{kruse2017} proves our statement.
\end{proof}

In particular, if $J$ is countable and $\mathcal{V}^{k}$ locally bounded away from zero, 
then the Fr\'{e}chet spaces $\mathcal{CV}^{k}(\Omega)$ and $\mathcal{CV}^{k}_{0}(\Omega)$ are barrelled. 
This result allows us to identify the injective tensor product of 
$\mathcal{CV}^{k}(\Omega)$ resp.\ $\mathcal{CV}^{k}_{0}(\Omega)$ and $E$ with 
a subspace of $\mathcal{CV}^{k}(\Omega,E)$ resp.\ $\mathcal{CV}^{k}_{0}(\Omega,E)$.
Let us use the symbol $\mathcal{F}$ for $\mathcal{CV}^{k}$ or $\mathcal{CV}^{k}_{0}$. 
We consider $\mathcal{F}(\Omega)\otimes E$ as an algebraic subspace of $\mathcal{F}(\Omega)\varepsilon E$ 
by means of the linear injection 
\[
\Theta_{\mathcal{F}(\Omega)}\colon \mathcal{F}(\Omega)\otimes E\to \mathcal{F}(\Omega)\varepsilon E,\; 
\sum^{m}_{n=1}{f_{n}\otimes e_{n}}\longmapsto\bigl[y\mapsto \sum^{m}_{n=1}{y(f_{n}) e_{n}}\bigr].
\]
Via $\Theta_{\mathcal{F}(\Omega)}$ the topology of $\mathcal{F}(\Omega)\varepsilon E$ induces a locally convex topology on 
$\mathcal{F}(\Omega)\otimes E$ and $\mathcal{F}(\Omega)\otimes_{\varepsilon} E$ denotes 
$\mathcal{F}(\Omega)\otimes E$ equipped with this topology. 
From the preceding proposition and the composition $S_{\mathcal{F}(\Omega)}\circ\Theta_{\mathcal{F}(\Omega)}$ 
we obtain:

\begin{cor}\label{cor:tensor.embed}
Let $E$ be an lcHs, $k\in\mathbb{N}_{0,\infty}$, $\mathcal{V}^{k}$ be a family of weights 
which is locally bounded away from zero on a locally compact Hausdorff space $\Omega$ ($k=0$) or 
an open set $\Omega\subset\mathbb{R}^{d}$ ($k>0$). Fix the notation $\mathcal{F}=\mathcal{CV}^{k}$ or $\mathcal{CV}^{k}_{0}$ 
and let $\mathcal{F}(\Omega)$ be barrelled if $k>0$.
\begin{enumerate}
 \item [a)] We get by identification of isomorphic subspaces
 \[
  \mathcal{F}(\Omega)\otimes_{\varepsilon} E\subset \mathcal{F}(\Omega)\varepsilon E \subset \mathcal{F}(\Omega,E)
 \]
 and the embedding $\mathcal{F}(\Omega)\otimes E\hookrightarrow \mathcal{F}(\Omega,E)$ is given by 
 $f\otimes e\mapsto[x\mapsto f(x)e]$.
 \item [b)] Let $\mathcal{F}(\Omega)$ and $E$ be complete. 
 If $\mathcal{F}(\Omega)\otimes E$ is dense in $\mathcal{F}(\Omega,E)$, then 
 \[
 \mathcal{F}(\Omega,E)\cong \mathcal{F}(\Omega)\varepsilon E\cong \mathcal{F}(\Omega)\widehat{\otimes}_{\varepsilon} E.
 \]
 In particular, $\mathcal{F}(\Omega)$ has the approximation property if $\mathcal{F}(\Omega)\otimes E$ is dense 
 in $\mathcal{F}(\Omega,E)$ for every complete $E$.
\end{enumerate}
\end{cor}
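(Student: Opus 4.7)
The plan is to deduce both statements by combining Proposition~\ref{prop:eps-prod.embed}, the definition of $\otimes_{\varepsilon}$ through the injection $\Theta_{\mathcal{F}(\Omega)}$, and the completeness of $\mathcal{F}(\Omega,E)$ furnished by Proposition~\ref{prop:cv-complete}. No new analytic input is needed; the argument is essentially a transport of structure along the maps $S_{\mathcal{F}(\Omega)}$ and $\Theta_{\mathcal{F}(\Omega)}$.

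For part~(a), Proposition~\ref{prop:eps-prod.embed}, applied in the appropriate case $\mathcal{F}=\mathcal{CV}^{k}$ or $\mathcal{F}=\mathcal{CV}^{k}_{0}$ and invoking the barrelledness hypothesis when $k>0$, provides the topological embedding $S_{\mathcal{F}(\Omega)}\colon\mathcal{F}(\Omega)\varepsilon E\hookrightarrow\mathcal{F}(\Omega,E)$, which gives the second inclusion. By definition $\mathcal{F}(\Omega)\otimes_{\varepsilon}E$ is the algebraic tensor product carrying the topology transported from $\mathcal{F}(\Omega)\varepsilon E$ via $\Theta_{\mathcal{F}(\Omega)}$, so the first inclusion is immediate. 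Evaluating the composition $S_{\mathcal{F}(\Omega)}\circ\Theta_{\mathcal{F}(\Omega)}$ on a simple tensor gives $f\otimes e\longmapsto[x\mapsto\delta_{x}(f)e]=[x\mapsto f(x)e]$, identifying $\mathcal{F}(\Omega)\otimes E$ with the stated subspace of $\mathcal{F}(\Omega,E)$.

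For part~(b), suppose $\mathcal{F}(\Omega)\otimes E$ is dense in $\mathcal{F}(\Omega,E)$. Via the chain from~(a), the intermediate space $\mathcal{F}(\Omega)\varepsilon E$ is then dense in $\mathcal{F}(\Omega,E)$ as well. Proposition~\ref{prop:cv-complete} makes $\mathcal{F}(\Omega,E)$ complete, and the classical fact that the Schwartz $\varepsilon$-product of two complete locally convex Hausdorff spaces is complete (see, e.g., \cite{Kaballo}) makes $\mathcal{F}(\Omega)\varepsilon E$ complete as well. A complete dense subspace of a Hausdorff space coincides with it, yielding $\mathcal{F}(\Omega,E)\cong\mathcal{F}(\Omega)\varepsilon E$. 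By the same token, the density of $\mathcal{F}(\Omega)\otimes_{\varepsilon}E$ in the complete space $\mathcal{F}(\Omega,E)$ identifies the latter with the completion $\mathcal{F}(\Omega)\widehat{\otimes}_{\varepsilon}E$, thanks to uniqueness of completions. For the final addendum, if the density hypothesis holds for every complete $E$, then it holds in particular for every Banach space $E$, and via the isomorphism $\mathcal{F}(\Omega,E)\cong\mathcal{F}(\Omega)\varepsilon E$ this is exactly the condition that $\mathcal{F}(\Omega)\otimes E$ be dense in $\mathcal{F}(\Omega)\varepsilon E$ for every Banach space $E$; by the characterization \cite[Satz 10.17, p.\ 250]{Kaballo} recalled in the introduction, this is equivalent to $\mathcal{F}(\Omega)$ having the approximation property.

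The only non-bookkeeping ingredient is the completeness of the $\varepsilon$-product of two complete locally convex Hausdorff spaces, so the main (minor) obstacle is simply citing this standard result correctly; beyond that, every inclusion and topological identification has already been engineered in Proposition~\ref{prop:eps-prod.embed} and the definition of $\Theta_{\mathcal{F}(\Omega)}$.
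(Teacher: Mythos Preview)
Your proof is correct and follows essentially the same route as the paper: both arguments use \prettyref{prop:eps-prod.embed} and the map $\Theta_{\mathcal{F}(\Omega)}$ for part~a), and for part~b) both invoke completeness of the $\varepsilon$-product of two complete spaces (the paper cites \cite[Satz 10.3, p.\ 234]{Kaballo}) together with the chain of inclusions from~a). Your explicit appeal to \prettyref{prop:cv-complete} for the completeness of $\mathcal{F}(\Omega,E)$ is harmless but redundant, since once $\mathcal{F}(\Omega)\varepsilon E$ is complete it is closed in $\mathcal{F}(\Omega,E)$, and density of the tensor product then forces equality without knowing $\mathcal{F}(\Omega,E)$ complete in advance.
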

\begin{proof}
\begin{enumerate}
 \item [a)] The inclusions obviously hold by \prettyref{prop:eps-prod.embed} and 
 $\mathcal{F}(\Omega)\varepsilon E$ and $\mathcal{F}(\Omega,E)$ 
 induce the same topology on $\mathcal{F}(\Omega)\otimes E$. 
 Further, we have
 \[
 f\otimes e \overset{\Theta_{\mathcal{F}(\Omega)}}{\longmapsto}[y\mapsto y(f)e]
 \overset{S_{\mathcal{F}(\Omega)}}{\longmapsto}[x\longmapsto [y\mapsto y(f) e](\delta_{x})]
 =[x\mapsto f(x)e].
 \]
 \item [b)] If $\mathcal{F}(\Omega)$ and $E$ are complete, 
 then we obtain that $\mathcal{F}(\Omega)\varepsilon E$ is complete 
 by \cite[Satz 10.3, p.\ 234]{Kaballo}. 
 In addition, we get the completion of $\mathcal{F}(\Omega)\otimes_{\varepsilon} E$ 
 as its closure in $\mathcal{F}(\Omega)\varepsilon E$ which coincides 
 with the closure in $\mathcal{F}(\Omega,E)$. The rest follows directly from a).
\end{enumerate}
\end{proof}

Looking at part a), we derive 
\[
(S_{\mathcal{F}(\Omega)}\circ\Theta_{\mathcal{F}(\Omega)})\bigl(\sum^{m}_{n=1}{f_{n}\otimes e_{n}}\bigr)
=\sum^{m}_{n=1}{f_{n}e_{n}}
\]
for $m\in\mathbb{N}$, $f_{n}\in\mathcal{F}(\Omega)$ and $e_{n}\in E$, $1\leq n\leq m$. 
Hence we see that the answer to our question is affirmative if $\mathcal{F}(\Omega)\otimes E$ 
is dense in $\mathcal{F}(\Omega,E)$. For the sake of completeness we remark the following.

\begin{prop}\label{prop:eps-prod.isom}
Let $E$ be an lcHs, 
$k\in\mathbb{N}_{0,\infty}$, $\mathcal{V}^{k}$ be a family of weights 
which is locally bounded away from zero on a locally compact Hausdorff space $\Omega$ ($k=0$) or 
an open set $\Omega\subset\mathbb{R}^{d}$ ($k>0$).
\begin{enumerate}
\item [a)] In addition, let $\mathcal{CV}^{k}_{0}(\Omega)$ be barrelled if $k>0$. If $E$ is quasi-complete 
and $\mathcal{V}^{k}$ locally bounded on $\Omega$, then
\[
 \mathcal{CV}^{k}_{0}(\Omega)\varepsilon E\cong \mathcal{CV}^{k}_{0}(\Omega,E)\quad\text{via}\;S_{\mathcal{CV}^{k}_{0}(\Omega)}.
\]
\item [b)] In addition, let $\mathcal{CV}^{k}(\Omega)$ be barrelled if $k>0$. If $E$ is a semi-Montel space, then
\[
 \mathcal{CV}^{k}(\Omega)\varepsilon E\cong\mathcal{CV}^{k}(\Omega,E)\quad\text{via}\;S_{\mathcal{CV}^{k}(\Omega)}.
\]
\end{enumerate}
\end{prop}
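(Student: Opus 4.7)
The plan is to show that the embedding $S$ from \prettyref{prop:eps-prod.embed} is also surjective under the stronger hypotheses of each part. Given $f$ in $\mathcal{CV}^k_0(\Omega,E)$ (resp.\ $\mathcal{CV}^k(\Omega,E)$), I construct $u\in\mathcal{CV}^k_0(\Omega)\varepsilon E$ (resp.\ $\mathcal{CV}^k(\Omega)\varepsilon E$) satisfying $u(\delta_x)=f(x)$ for every $x\in\Omega$. The natural candidate is obtained via the map $\Psi_f\colon E'\to\mathcal{F}(\Omega)$, $e'\mapsto e'\circ f$, which is well-defined thanks to the scalarisation identity $(\partial^\beta)^{\K}(e'\circ f)=e'\circ(\partial^\beta)^E f$ together with the estimate $|e'\circ f|_{j,l}\le |f|_{j,l,\alpha}$ whenever $|e'|\le p_\alpha$; the vanishing condition for $f$ transfers in the same way. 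For fixed $y\in\mathcal{F}(\Omega)'$, one then wants to define $u(y)$ as the element of $E$ represented by the linear form $e'\mapsto y(\Psi_f(e'))$ on $E'$, and the identity $u(\delta_x)=f(x)$ is immediate once such a representation exists.

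The decisive step is to ensure that this linear form really comes from an element of $E$. Here I would invoke Grothendieck's representation criterion for the $\varepsilon$-product: $u(y)\in E$ provided $\Psi_f$ carries equicontinuous subsets of $E'$ into relatively compact subsets of $\mathcal{F}(\Omega)$, or dually, that for every $j,l,\alpha$ the weighted family
\[
 A_{j,l,\alpha}:=\bigl\{(\partial^\beta)^E f(x)\,\nu_{j,l}(x)\;\bigl|\;x\in\Omega,\,\beta\in\N_{0}^{d},\,|\beta|\le l\bigr\}\cup\{0\}
\]
is relatively compact in $E$ with respect to $p_\alpha$. For part~(a) with $\mathcal{CV}^k_0$, one uses the vanishing condition: given $\varepsilon>0$ one picks a compact $K\subset\Omega$ with $|f|_{\Omega\setminus K,j,l,\alpha}<\varepsilon$, so that the values of $A_{j,l,\alpha}$ coming from $\Omega\setminus K$ lie in the $p_\alpha$-ball of radius $\varepsilon$ around $0$, while those coming from $K$ form a $p_\alpha$-compact set by continuity of the derivatives on $K$ together with local boundedness of $\mathcal{V}^k$ on $K$. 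A standard $\varepsilon$-net argument yields $p_\alpha$-precompactness of $A_{j,l,\alpha}$, and quasi-completeness of $E$ turns the closed absolutely convex hull into a $p_\alpha$-compact set. For part~(b), semi-Montel-ness of $E$ converts the boundedness already encoded in $|f|_{j,l,\alpha}<\infty$ directly into relative compactness of $A_{j,l,\alpha}$, so neither the vanishing condition nor the local boundedness of $\mathcal{V}^k$ needs to be invoked.

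With $A_{j,l,\alpha}$ controlled, continuity of $u\colon\mathcal{F}(\Omega)'_\kappa\to E$ on equicontinuous subsets of the dual follows from continuity of $\Psi_f$ and the definition of the $\varepsilon$-topology, so $u\in\mathcal{F}(\Omega)\varepsilon E$ with $S(u)=f$, giving surjectivity and thus, in combination with \prettyref{prop:eps-prod.embed}, the desired isomorphism. The main technical obstacle is part~(a): combining the vanishing-at-infinity condition with quasi-completeness and local boundedness to produce \emph{absolutely convex} compactness of $A_{j,l,\alpha}$, because $\varepsilon$-product membership is tested against absolutely convex compact subsets of the dual via its $\kappa$-topology. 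Part~(b) should then fall out essentially as a direct corollary of the semi-Montel hypothesis combined with \prettyref{prop:eps-prod.embed}.
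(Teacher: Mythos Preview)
Your direct construction is essentially correct and is precisely the argument that lies behind the results the paper invokes. Note, however, that the paper does not give a self-contained proof here: it simply cites \cite[3.19--3.21, 4.1, 4.2, 4.13, 5.10]{kruse2017}, where a general framework for such ``$S$-maps'' is developed and the surjectivity is established via exactly the mechanism you describe (showing that $e'\mapsto e'\circ f$ defines an element of $E\varepsilon\mathcal{F}(\Omega)$ and then using the symmetry $E\varepsilon\mathcal{F}(\Omega)\cong\mathcal{F}(\Omega)\varepsilon E$). So your approach is not different in spirit, only more explicit than what the paper chooses to write down.

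Two small points worth tightening. First, your set $A_{j,l,\alpha}$ does not actually depend on $\alpha$; what you need is that $A_{j,l}$ is precompact in the \emph{full} topology of $E$, i.e.\ for every $\alpha$ and every $\varepsilon>0$ it admits a finite $p_\alpha$-net. Your argument delivers this because the compact $K\subset\Omega$ may depend on $\alpha$, and the ``inside'' part $\{\partial^\beta f(x)\nu_{j,l}(x):x\in K,\,|\beta|\le l\}$ is contained in the compact set $[0,\sup_K\nu_{j,l}]\cdot\{\partial^\beta f(x):x\in K,\,|\beta|\le l\}$, using only continuity of the derivatives and local boundedness of the weight (not continuity of $\nu_{j,l}$). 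Second, the passage from ``$\Psi_f\in L(E'_\kappa,\mathcal{F}(\Omega))$'' to ``$u\in L(\mathcal{F}(\Omega)'_\kappa,E)$'' is cleanest if you simply invoke the standard symmetry $E\varepsilon F\cong F\varepsilon E$ rather than redoing the transpose by hand; your sketch of that step (``continuity on equicontinuous subsets follows from continuity of $\Psi_f$'') is a bit elliptic, since what is really needed is that $\Psi_f$ sends equicontinuous subsets of $E'$ to relatively compact subsets of $\mathcal{F}(\Omega)$, and that is exactly what the symmetry theorem packages for you.
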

\begin{proof}
For $k>0$ this is \cite[5.10 Example a), p.\ 28]{kruse2017} resp.\ \cite[3.21 Example a), p.\ 14]{kruse2017}.  
Statement a) for $k=0$ is a consequence of \cite[3.20 Corollary, p.\ 13]{kruse2017} in combination with 
\cite[4.1 Proposition, p.\ 18]{kruse2017}, \cite[4.2 Lemma (i), p.\ 19]{kruse2017} and \cite[4.13 Proposition, p.\ 23]{kruse2017}. 
For $k=0$ statement b) follows from \cite[3.19 Corollary, p.\ 13]{kruse2017} in combination with 
\cite[4.1 Proposition, p.\ 18]{kruse2017} and \cite[4.2 Lemma (i), p.\ 19]{kruse2017}.
\end{proof}

The corresponding results for $k=0$ and a Nachbin-family $\mathcal{V}^{0}$ of weights are given in 
\cite[2.4 Theorem, p.\ 138-139]{B2} and \cite[2.12 Satz, p.\ 141]{B2}.
In combination with our preceding observation, we deduce that every element of $\mathcal{CV}^{k}_{0}(\Omega,E)$ 
can be approximated in $\mathcal{CV}^{k}_{0}(\Omega,E)$ by functions with values in a finite dimensional subspace 
if $E$ is a quasi-complete space with approximation property and the assumptions of the proposition above are fulfilled. 
The same is true for $\mathcal{CV}^{k}(\Omega,E)$ if $E$ is a semi-Montel space with approximation property. 
Due to the strong conditions on $E$ this is not really satisfying but actually the best we get for 
general $\mathcal{CV}^{k}(\Omega,E)$. For $\mathcal{CV}^{k}_{0}(\Omega,E)$ there is a better result available 
whose proof we prepare on the next pages.

\section{Convolution via the Pettis-integral}

In this section we review the notion of the Pettis-integral. Tr\`{e}ves uses the Riemann-integral 
to define the convolution $f\ast g$ of a function $f\in\mathcal{C}_{c}^{k}(\Omega,E)$ and a function 
$g\in \mathcal{C}^{\infty}_{c}(\mathbb{R}^{d})$ in the proof of Theorem \ref{thm:treves} 
and states (without a proof) that the convolution defined in this way is a function in
$\mathcal{C}_{c}^{\infty}(\mathbb{R}^{d},\widehat{E})$ and 
has all the properties known from the convolution of two scalar-valued functions. 
We use the Pettis-integral instead to define the convolution. The reason is that we can use the dominated convergence 
theorem for the Pettis-integral \cite[Theorem 2, p.\ 162-163]{musial1987} to get the Leibniz' rule 
for differentiation under the integral sign which enables us to prove that the convolution 
has some of the key properties known from the scalar-valued case.

Let us fix some notation first. 
For a measure space $(X, \Sigma, \mu)$ let 
\[
\mathfrak{L}^{1}(X,\mu):=\{f\colon X\to\mathbb{K}\;\text{measurable}\;|\;
q_{1}(f):=\int_{X}|f(x)|\d\mu(x)<\infty\}
\]
and define the quotient space of integrable functions with respect to measure $\mu$ by 
$\mathcal{L}^{1}(X,\mu):=\mathfrak{L}^{1}(X,\mu)/\{f\in\mathfrak{L}^{1}(X,\mu)\;|\;q_{1}(f)=0\}$.
From now on we do not distinguish between equivalence classes and 
their representants anymore.
We say that $f\colon X\to \mathbb{K}$ is integrable on $\Lambda\in\Sigma$ 
and write $f\in\mathcal{L}^{1}(\Lambda,\mu)$ if $\chi_{\Lambda}f\in \mathcal{L}^{1}(X,\mu)$ 
where $\chi_{\Lambda}$ is the characteristic function of $\Lambda$. Then we set 
\[
\int\limits_{\Lambda} f(x) \d\mu(x):=\int\limits_{X} \chi_{\Lambda}(x)f(x)  \d\mu(x).
\] 

\begin{defn}[Pettis-integral]
\label{def:integral}
	Let $(X, \Sigma, \mu)$ be a measure space and $E$ an lcHs. 
	A function $f: X \to E$ is called weakly (scalarly) measurable if the function 
	$e'\circ f: X \to \mathbb{K}$, $(e'\circ f)(x) := \langle e' , f(x) \rangle:=e'(f(x))$, 
	is measurable for all $e' \in E'$.  
	A weakly measurable function is said to be weakly (scalarly) integrable 
	if $ e' \circ f \in \mathcal{L}^{1}(X, \mu)$. 
	A function $f\colon X\to E$ is called Pettis-integrable on $\Lambda\in\Sigma$ 
 	if it is weakly integrable on $\Lambda$ and
	\[
	\exists\; e_{\Lambda} \in E \; \forall\; e' \in E':\; \langle e' , e_{\Lambda} \rangle 
	= \int\limits_{\Lambda} \langle e' , f(x) \rangle \d\mu(x). 
	\]  
	In this case $e_{\Lambda}$ is unique due to $E$ being Hausdorff and we set 
	\[
	 \int\limits_{\Lambda} f(x) \d\mu(x):=e_{\Lambda}.
	\]	
	A function $f$ is called Pettis-integrable on $\Sigma$ if it is Pettis-integrable on all $\Lambda\in\Sigma$.
\end{defn}	 

We write $\mathcal{N}_{\mu}$ for the set of $\mu$-null sets of a measure space $(X, \Sigma, \mu)$ 
and for $\Lambda\in\Sigma$ we use the notion 
$(\Lambda,\Sigma_{\mid\Lambda},\mu_{\mid\Lambda})$ for the restricted measure space given by 
$\Sigma_{\mid\Lambda}:=\{\omega\in\Sigma\;|\;\omega\subset\Lambda\}$ and $\mu_{\mid\Lambda}:=\mu_{\mid\Sigma_{\mid\Lambda}}$. 
If we consider the measure space $(\mathbb{R}^{d}, \mathscr{L}(\mathbb{R}^{d}), \lambda)$ of Lebesgue measurable sets, 
we just write $\d x:=\d\lambda(x)$.

\begin{rem}\label{rem:int.auf.traeger}
Let $(X, \Sigma, \mu)$ be a measure space, $E$ an lcHs and $f$ Pettis-integrable 
on $\Lambda\in \Sigma$. If $\omega\in\Sigma$ such that $\omega\subset\Lambda$ and 
$(\Lambda\setminus\omega)\subset\{x\in X\;|\;f(x)=0\}$, then $f$ is Pettis-integrable on $\omega$ 
and 
\begin{equation}\label{rem:int.auf.traeger.1}
\int\limits_{\omega} f(x) \d\mu(x)=\int\limits_{\Lambda} f(x) \d\mu(x).
\end{equation}
This follows directly from 
\[
 \langle e' ,  \int\limits_{\Lambda} f(x) \d\mu(x)\rangle
 =\int\limits_{\Lambda} \langle e' , f(x) \rangle\d\mu(x)
 =\int\limits_{\omega} \langle e' , f(x) \rangle\d\mu(x),\quad e'\in E'.
\]
\end{rem}

\begin{lem}\label{lem:cont.para}
Let $E$ be a quasi-complete lcHs, $(X, \Sigma, \mu)$ a measure space, $T$ a metric space
and suppose that $f\colon X\times T\to E$ fulfils the following conditions.
\begin{enumerate}
 \item [a)] $f(\cdot,t)$ is Pettis-integrable on $\Sigma$ for all $t\in T$,
 \item [b)] $f(x,\cdot)\colon T\to E$ is continuous in a point $t_{0}\in T$ for $\mu$-almost all $x\in X$,
 \item [c)] there is a neighbourhood $U\subset T$ of $t_{0}$ and a Pettis-integrable function $\psi$ on $\Sigma$ 
 such that 
 \[
  \forall\;t\in U,\, e'\in E'\;\exists\;N\in\mathcal{N}_{\mu}\;\forall\; x\in X\setminus N:\; 
  |\langle e' , f(x,t) \rangle\bigr|\leq |\langle e' , \psi(x) \rangle|.
 \]
\end{enumerate}
Then $g_{\Lambda}\colon T\to E$, $g_{\Lambda}(t):=\int\limits_{\Lambda} f(x,t) \d\mu(x)$, is well-defined 
and continuous in $t_{0}$ for every $\Lambda\in\Sigma$. 
\end{lem}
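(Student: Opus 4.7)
I plan to deduce the continuity from the dominated convergence theorem for the Pettis integral \cite[Theorem 2, p.\ 162-163]{musial1987} already invoked in the discussion preceding the lemma. Well-definedness of $g_{\Lambda}(t)$ for every $t\in T$ is immediate: by hypothesis (a), $f(\cdot,t)$ is Pettis-integrable on every $\omega\in\Sigma$, hence in particular on $\Lambda$.

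For continuity at $t_{0}$, I exploit that $T$ is a metric space, so sequential continuity suffices. Fix a sequence $(t_{n})_{n\in\N}$ in $T$ with $t_{n}\to t_{0}$; after dropping finitely many initial terms I may assume $t_{n}\in U$ for every $n$, where $U$ is the neighbourhood supplied by (c). Set $f_{n}:=f(\cdot,t_{n})$. Hypothesis (b) gives a $\mu$-null set $N_{0}$ outside of which $f_{n}(x)\to f(x,t_{0})$ in $E$, while hypothesis (c) provides, for each $e'\in E'$ and each $n\in\N$, a $\mu$-null set outside of which $|\langle e',f_{n}(x)\rangle|\leq |\langle e',\psi(x)\rangle|$; passing to the pointwise limit, the same bound persists for $f(\cdot,t_{0})$. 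The dominating function $\psi$ is Pettis-integrable on $\Lambda$ because it is Pettis-integrable on $\Sigma$.

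These are precisely the hypotheses of Musial's dominated convergence theorem, which I then apply on $\Lambda$ to obtain
\[
g_{\Lambda}(t_{n})=\int_{\Lambda}f_{n}(x)\d\mu(x)\longrightarrow \int_{\Lambda}f(x,t_{0})\d\mu(x)=g_{\Lambda}(t_{0})
\]
in the topology of $E$. Since the sequence was arbitrary, this yields continuity of $g_{\Lambda}$ at $t_{0}$.

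The step that requires most care is matching the form of conditions (b) and (c) with the precise hypotheses of Musial's theorem; in particular, the exceptional null set appearing in (c) is allowed to depend on both $e'$ and $n$, and the quasi-completeness of $E$ enters exactly at this point, ensuring that the limit in $E$ of the sequence of Pettis integrals actually exists in $E$ rather than merely in its completion and coincides with the Pettis integral of the pointwise limit. A purely scalar application of Lebesgue's theorem only yields weak convergence $\langle e',g_{\Lambda}(t_{n})\rangle\to\langle e',g_{\Lambda}(t_{0})\rangle$, which is insufficient; the vector-valued upgrade afforded by Musial's result is what makes the argument go through.
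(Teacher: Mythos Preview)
Your proposal is correct and follows essentially the same route as the paper: reduce to sequences via metrisability, verify the domination condition from (c), and invoke Musial's dominated convergence theorem for the Pettis integral together with quasi-completeness of $E$. The only cosmetic difference is that the paper separates out, as an explicit preliminary step, the scalar dominated convergence theorem to establish $\lim_{n}\int_{\Lambda}\langle e',f(x,t_{n})\rangle\,\d\mu(x)=\int_{\Lambda}\langle e',f(x,t_{0})\rangle\,\d\mu(x)$ for every $\Lambda\in\Sigma$ and $e'\in E'$, since this convergence (rather than pointwise a.e.\ convergence in $E$) is the form in which Musial's hypothesis is stated; you allude to this in your final paragraph but do not spell it out.
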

\begin{proof}
Let $\Lambda\in\Sigma$ and $(t_{n})$ be a sequence in $U$ converging to $t_{0}$. 
From the continuous dependency of a scalar integral on a parameter (see \cite[5.6 Satz, p.\ 147]{elstrodt2005}) 
we derive
\begin{equation}\label{lem:cont.para.1}
 \lim_{n\to\infty}\int_{\Lambda}\langle e' ,\underbrace{f(x,t_{n})}_{=:f_{n}(x)}\rangle \d\mu(x)
 = \int_{\Lambda}\langle e' ,\underbrace{f(x,t_{0})}_{=:\widetilde{f}(x)}\rangle \d\mu(x).
\end{equation}
For $n\in\mathbb{N}$ and $e'\in E'$ there is $N\in\mathcal{N}_{\mu}$ such that
\begin{equation}\label{lem:cont.para.2}
|\langle e' ,f_{n}(x) \rangle|=|\langle e' ,f(x,t_{n}) \rangle|\leq |\langle e' , \psi(x) \rangle|
\end{equation}
for every $x\in X\setminus N$. 
Due to \eqref{lem:cont.para.1} for every $\Lambda\in\Sigma$ and $e'\in E'$,  
\eqref{lem:cont.para.2} and the quasi-completeness of $E$
we can apply the dominated convergence theorem for the Pettis-integral \cite[Theorem 2, p.\ 162-163]{musial1987} 
and deduce
\[
 \lim_{n\to\infty}g_{\Lambda}(t_{n})
 =\lim_{n\to\infty}\int\limits_{\Lambda}{f}_{n}(x) \d\mu(x)
 =\int\limits_{\Lambda} \widetilde{f}(x) \d\mu(x)
 =g_{\Lambda}(t_{0}).
\]
\end{proof}

The next lemma is the Leibniz' rule for differentiation under the integral sign for the Pettis-integral.

\begin{lem}[{Leibniz' rule}]\label{lem:Leibniz}
Let $E$ be a quasi-complete lcHs, $(X, \Sigma, \mu)$ a measure space, $T\subset\mathbb{R}^{d}$ open 
and suppose that $f\colon X\times T\to E$ fulfils the following conditions.
\begin{enumerate}
 \item [a)] $f(\cdot,t)$ is Pettis-integrable on $\Sigma$ for all $t\in T$,
 \item [b)] there is a $\mu$-null set $N_{0}\in\mathcal{N}_{\mu}$ with $f(x,\cdot)\in\mathcal{C}^{1}(T,E)$ 
 for all $x\in X\setminus N_{0}$,
 \item [c)] for every $j\in\mathbb{N}$, $1\leq j\leq d$, there is a Pettis-integrable function $\psi_{j}$ on $\Sigma$ such that 
 \[
 \qquad\quad \forall\;e'\in E'\;\exists\; N\in\mathcal{N}_{\mu}\;\forall\;x\in X\setminus (N\cup N_{0}):\;
 \bigl|(\partial_{t_{j}})^{\mathbb{K}}\langle e' , f(x,\cdot) \rangle\bigr|\leq |\langle e' , \psi_{j}(x) \rangle|.
 \]
\end{enumerate}
Then $g_{\Lambda}\colon T\to E$, $g_{\Lambda}(t):=\int\limits_{\Lambda} f(x,t) \d\mu(x)$, is well-defined 
for every $\Lambda\in\Sigma$, $g_{\Lambda}\in \mathcal{C}^{1}(T,E)$ and%, $(\partial_{t_{j}})^{E}f(\cdot,t)$ is 
%Pettis-integrable on $\Sigma$ for all $t\in T$ 
\[
(\partial_{t_{j}})^{E}g_{\Lambda}(t)=\int\limits_{\Lambda} (\partial_{t_{j}})^{E}f(x,t) \d\mu(x),\quad t\in T.
\]
\end{lem}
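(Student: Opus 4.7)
The plan is to establish the derivative formula pointwise in $t_{0}\in T$ via the dominated convergence theorem of Musial for the Pettis-integral---just as in the proof of \prettyref{lem:cont.para}---and then to deduce continuity of the partial derivatives from \prettyref{lem:cont.para} itself applied to $(\partial_{t_{j}})^{E}f$.

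Fix $t_{0}\in T$ and $1\leq j\leq d$, choose $r>0$ so that $t_{0}+he_{j}\in T$ for $|h|\leq r$, and let $(h_{n})$ be any sequence in $(-r,r)\setminus\{0\}$ with $h_{n}\to 0$. Set
\[
F_{n}\colon X\to E,\quad F_{n}(x):=\frac{f(x,t_{0}+h_{n}e_{j})-f(x,t_{0})}{h_{n}}.
\]
By (a) and linearity, each $F_{n}$ is Pettis-integrable on $\Sigma$ and $\int_{\Lambda}F_{n}(x)\d\mu(x)=(g_{\Lambda}(t_{0}+h_{n}e_{j})-g_{\Lambda}(t_{0}))/h_{n}$. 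By (b), for every $x\in X\setminus N_{0}$ we have $F_{n}(x)\to (\partial_{t_{j}})^{E}f(x,t_{0})$ in $E$, and hence scalarly for every $e'\in E'$.

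The essential step is the scalar domination. Fix $e'\in E'$ and let $N=N_{e'}\in\mathcal{N}_{\mu}$ be the null set provided by (c). For $x\in X\setminus(N\cup N_{0})$ the scalar function $s\mapsto \langle e',f(x,t_{0}+sh_{n}e_{j})\rangle$ is $\mathcal{C}^{1}$ on $[0,1]$, so the fundamental theorem of calculus gives
\[
\langle e',F_{n}(x)\rangle=\int_{0}^{1}(\partial_{t_{j}})^{\mathbb{K}}\langle e',f(x,t_{0}+sh_{n}e_{j})\rangle\d s,
\]
whence by (c) $|\langle e',F_{n}(x)\rangle|\leq |\langle e',\psi_{j}(x)\rangle|$. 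Crucially, $N$ depends only on $e'$ and not on $t$, so this bound is uniform in $n$ and also valid for the pointwise limit. The dominated convergence theorem \cite[Theorem 2, p.\ 162-163]{musial1987}, available because $E$ is quasi-complete, now yields that $(\partial_{t_{j}})^{E}f(\cdot,t_{0})$---defined as $0$ on $N_{0}$---is Pettis-integrable on $\Sigma$ and
\[
\lim_{n\to\infty}\int_{\Lambda}F_{n}(x)\d\mu(x)=\int_{\Lambda}(\partial_{t_{j}})^{E}f(x,t_{0})\d\mu(x)
\]
in $E$. Since $(h_{n})$ was arbitrary, $(\partial_{t_{j}})^{E}g_{\Lambda}(t_{0})$ exists and equals the right-hand side.

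It remains to show continuity of $(\partial_{t_{j}})^{E}g_{\Lambda}$ on $T$. Applying \prettyref{lem:cont.para} to $(x,t)\mapsto (\partial_{t_{j}})^{E}f(x,t)$, Pettis-integrability in $t$ has just been verified, continuity at an arbitrary $t_{0}\in T$ (for $x\notin N_{0}$) follows from $f(x,\cdot)\in\mathcal{C}^{1}(T,E)$, and scalar domination on a neighbourhood of $t_{0}$ is again furnished by (c) with the Pettis-integrable dominant $\psi_{j}$; this gives $g_{\Lambda}\in\mathcal{C}^{1}(T,E)$. The principal technical obstacle is the reduction from the vector-valued difference quotient to a scalar estimate feeding Musial's theorem: this is why (c) is stated uniformly in $t$ (with the null set depending only on $e'$), and why the fundamental theorem of calculus---rather than a bare mean value theorem, which would produce an intermediate point depending on $e'$---is the right tool here.
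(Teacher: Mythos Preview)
Your proof is correct and follows essentially the same route as the paper: difference quotients, scalar domination feeding Musial's dominated convergence theorem, and then \prettyref{lem:cont.para} applied to $(\partial_{t_{j}})^{E}f$ for continuity. The only substantive difference is that the paper obtains the domination estimate via the real mean value theorem (splitting into real and imaginary parts when $\mathbb{K}=\mathbb{C}$), whereas you use the integral form of the fundamental theorem of calculus; your version handles both scalar fields at once, which is a small simplification. Your closing remark, however, slightly misidentifies the issue with the mean value theorem: the intermediate point depending on $e'$ is harmless here because condition (c) bounds $\bigl|(\partial_{t_{j}})^{\mathbb{K}}\langle e',f(x,\cdot)\rangle\bigr|$ uniformly in $t$, so the paper's mean value argument goes through for $\mathbb{K}=\mathbb{R}$ without difficulty---the genuine obstruction to MVT is the complex case, not the $e'$-dependence.
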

\begin{proof}
First, we consider the case $\mathbb{K}=\mathbb{R}$. Let $\Lambda\in\Sigma$, $j\in\mathbb{N}$, $1\leq j\leq d$, $t\in T$ 
and $(h_{n})$ be a real sequence converging to $0$ such that $h_{n}\neq 0$ and $t+h_{n}e_{j}\in T$ for all $n$ where 
$e_{j}$ is the $j$-th unit vector in $\mathbb{R}^{d}$. Then 
\[
\frac{g_{\Lambda}(t+h_{n}e_{j})-g_{\Lambda}(t)}{h_{n}}
=\int_{\Lambda}\underbrace{\frac{f(x,t+h_{n}e_{j})-f(x,t)}{h_{n}}}_{=:f_{n}(x)} \d\mu(x).
\] 
We define the function $\widetilde{f}\colon X\to E$ given by $\widetilde{f}(x):=(\partial_{t_{j}})^{E}f(x,t)$ 
for $x\in X\setminus N_{0}$ and $\widetilde{f}(x):=0$ 
for $x\in N_{0}$. We observe that 
\begin{align}\label{thm:Leibniz.1}
\lim_{n\to\infty}\int_{\Lambda}\langle e' ,f_{n}(x) \rangle \d\mu(x)
&=\int_{\Lambda}(\partial_{t_{j}})^{\mathbb{K}}\langle e' ,f(x,t) \rangle \d\mu(x)\notag\\
&=\int_{\Lambda}\langle e' ,\partial_{t_{j}}^{E}f(x,t) \rangle \d\mu(x)
=\int_{\Lambda}\langle e' ,\widetilde{f}(x) \rangle \d\mu(x)
\end{align}
holds for every $e'\in E'$ where we used the scalar Leibniz' rule for differentiation under the integral sign for the first equation 
which can be applied due to our assumptions (see  \cite[5.7 Satz, p.\ 147-148]{elstrodt2005}). 
For $e'\in E'$ there is $N\in \mathcal{N}_{\mu}$ such that for every 
$x\in X\setminus (N\cup N_{0})$ and $n\in\mathbb{N}$ there is $\theta\in[0,1]$ with
\[
\langle e' ,f_{n}(x) \rangle=\frac{\langle e' ,f(x,t+h_{n}e_{j})\rangle-\langle e' ,f(x,t)\rangle}{h_{n}}
=(\partial_{t_{j}})^{\mathbb{K}}\langle e' ,f(x,t+\theta h_{n}e_{j})\rangle
\]
by the mean value theorem ($\mathbb{K}=\mathbb{R}$) implying 
\begin{equation}\label{thm:Leibniz.2}
|\langle e' ,f_{n}(x) \rangle|=|(\partial_{t_{j}})^{\mathbb{K}}\langle e' ,f(x,t+\theta h_{n}e_{j})\rangle|
\leq |\langle e' , \psi_{j}(x) \rangle|.
\end{equation}
Due to \eqref{thm:Leibniz.1} for every $\Lambda\in\Sigma$ and $e'\in E'$, 
\eqref{thm:Leibniz.2} and the quasi-completeness of $E$ we can 
apply the dominated convergence theorem for the Pettis-integral \cite[Theorem 2, p.\ 162-163]{musial1987} again
and obtain that $\widetilde{f}$ is Pettis-integrable on $\Sigma$ plus 
\begin{align*}
(\partial_{t_{j}})^{E}g_{\Lambda}(t)&=\lim_{n\to\infty}\frac{g_{\Lambda}(t+h_{n}e_{j})-g_{\Lambda}(t)}{h_{n}}
=\lim_{n\to\infty}\int_{\Lambda}f_{n}(x) \d\mu(x)=\int_{\Lambda}\widetilde{f}(x)\d\mu(x)\\
&=\int\limits_{\Lambda} (\partial_{t_{j}})^{E}f(x,t) \d\mu(x).
\end{align*}
The continuity of $(\partial_{t_{j}})^{E}g_{\Lambda}$ follows from \prettyref{lem:cont.para} 
by replacing $f$ with $(\partial_{t_{j}})^{E}f$.
For $\mathbb{K}=\mathbb{C}$ we just have to substitute $\langle e' ,\cdot\rangle$ by $\operatorname{Re}\langle e' ,\cdot\rangle$ 
(real part) and $\operatorname{Im}\langle e' ,\cdot\rangle$ (imaginary part) in the considerations above.
\end{proof}

Now, we are able to define the convolution of a vector-valued and a scalar-valued continuous function via the 
Pettis-integral, if one of them has compact support, and to show some of its basic properties which are known from 
the convolution of scalar-valued functions (scalar convolution). 
For the properties of the scalar convolution see e.g.\ \cite[Chap.\ 26, p.\ 278-283]{Treves}.

\begin{lem}\label{lem:convolution}
Let $E$ be a quasi-complete lcHs, $k,n\in\mathbb{N}_{0,\infty}$, $f\in\mathcal{C}^{k}(\mathbb{R}^{d},E)$ 
and $g\in\mathcal{C}^{n}(\mathbb{R}^{d})$, either one having compact support. 
The convolution
\[
 f\ast g\colon\mathbb{R}^{d}\to E,\;(f\ast g)(x):=\int_{\mathbb{R}^{d}}f(y)g(x-y)\d y,
\]
is well-defined, $\operatorname{supp} (f\ast g)\subset \operatorname{supp}f+\operatorname{supp}g$, 
$f\ast g=g\ast f$, where 
\[
 g\ast f\colon\mathbb{R}^{d}\to E,\;(g\ast f)(x):=\int_{\mathbb{R}^{d}}g(y)f(x-y)\d y,
\]
and $f\ast g\in \mathcal{C}^{n}(\mathbb{R}^{d},E)$ plus
\begin{align}
 (\partial^{\beta})^{E}(f\ast g)&=f\ast \bigl((\partial^{\beta})^{\mathbb{K}}g\bigr) \quad &&,|\beta|\leq n,\label{lem:convolution.1}\\
 (\partial^{\beta})^{E}(f\ast g)&=\bigl((\partial^{\beta})^{E}f\bigr)\ast g &&,|\beta|\leq \min(k,n).\label{lem:convolution.2}
\end{align}
\end{lem}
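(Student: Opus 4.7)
The plan is to treat the four assertions---well-definedness of the Pettis integral, the support inclusion, commutativity, and the two derivative formulas---in turn. For well-definedness I would note that the integrand $y\mapsto f(y)g(x-y)$ is continuous and, since one of $f,g$ has compact support, is compactly supported uniformly for $x$ in any bounded subset of $\R^{d}$; the scalar pairings $y\mapsto\langle e',f(y)\rangle g(x-y)$ therefore lie in $\mathcal{L}^{1}(\R^{d})$ for every $e'\in E'$, and a continuous compactly supported $E$-valued function is Pettis-integrable on $\R^{d}$ since $E$ is quasi-complete (its Riemann sums over the support form a Cauchy net whose limit pairs correctly with each $e'$). The support inclusion follows from the pointwise observation that if $x\notin\operatorname{supp}f+\operatorname{supp}g$, then for every $y$ either $y\notin\operatorname{supp}f$ or $x-y\notin\operatorname{supp}g$, so the integrand vanishes identically and the Pettis integral equals zero; one of the summands being compact, $\operatorname{supp}f+\operatorname{supp}g$ is closed. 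Commutativity follows from the scalar change of variables $z=x-y$ applied inside $\langle e',(f\ast g)(x)\rangle$ for every $e'\in E'$, combined with uniqueness of the Pettis integral (here we use that $E$ is Hausdorff).

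Next I would establish \eqref{lem:convolution.1} by an iterated application of the Leibniz rule \prettyref{lem:Leibniz} to the parameter integral $t\mapsto\int_{\R^{d}}f(y)g(t-y)\d y$ on a bounded open neighbourhood $U$ of any fixed $t_{0}\in\R^{d}$. One differentiation in $t_{j}$ turns $(\partial_{t_{j}})^{E}(f\ast g)(t)$ into $(f\ast(\partial_{t_{j}})^{\mathbb{K}}g)(t)$; the new integrand has the same shape, so the argument iterates up to order $n$, and continuity at level $|\beta|=0$ is supplied by \prettyref{lem:cont.para}. For \eqref{lem:convolution.2} I would first use the commutativity proved above to rewrite the convolution as $(g\ast f)(x)=\int_{\R^{d}}g(y)f(x-y)\d y$ and then apply \prettyref{lem:Leibniz} once more, but now differentiating the $E$-valued factor $f(x-y)$ in $x$; this is legitimate for $|\beta|\le k$ and the scalar integrand $g(y)$ provides the required control. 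Another use of commutativity rewrites $g\ast(\partial^{\beta})^{E}f$ as $(\partial^{\beta})^{E}f\ast g$.

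The main obstacle is the verification of hypothesis (c) of \prettyref{lem:Leibniz}, namely constructing a Pettis-integrable dominating function $\psi_{j}$ on the neighbourhood $U$. Two cases arise depending on which of $f,g$ has compact support. If $\operatorname{supp}g\subset\overline{\mathbb{B}_{R}(0)}$, then for $t\in U$ the factor $(\partial_{t_{j}})^{\mathbb{K}}g(t-y)$ vanishes unless $y$ lies in the compact set $K:=\overline{U}-\overline{\mathbb{B}_{R}(0)}$, so one may take $\psi_{j}:=\|(\partial_{t_{j}})^{\mathbb{K}}g\|_{\infty}\,\chi_{K}f$. In the opposite case $y$ is already confined to $\operatorname{supp}f$, and $(\partial_{t_{j}})^{\mathbb{K}}g$ is bounded on the compact set $\overline{U}-\operatorname{supp}f$, yielding a suitable $\psi_{j}$ in the same way. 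In both cases $\psi_{j}$ is the product of a bounded measurable scalar function of compact support with a continuous $E$-valued function, hence Pettis-integrable by quasi-completeness of $E$, and the required pointwise bound $|(\partial_{t_{j}})^{\mathbb{K}}\langle e',F(y,\cdot)\rangle|\le|\langle e',\psi_{j}(y)\rangle|$ holds by construction. The analogous estimate for the derivative formula \eqref{lem:convolution.2} is obtained with the roles of $f$ and $g$ swapped.
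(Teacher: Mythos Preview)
Your treatment of well-definedness, the support inclusion, commutativity, and \eqref{lem:convolution.1} is essentially the paper's argument (the paper obtains Pettis-integrability via Mackey--Arens rather than Riemann sums, but the difference is cosmetic, and your dominating functions for \prettyref{lem:Leibniz} in the proof of \eqref{lem:convolution.1} match the paper's).

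The gap is in your route to \eqref{lem:convolution.2}. You propose to apply \prettyref{lem:Leibniz} to $(g\ast f)(t)=\int g(y)\,f(t-y)\,\d y$, differentiating the $E$-valued factor, and claim that ``the analogous estimate is obtained with the roles of $f$ and $g$ swapped.'' But the roles are not symmetric: hypothesis (c) of \prettyref{lem:Leibniz} demands a single Pettis-integrable $\psi_{j}\colon\R^{d}\to E$ with
\[
\bigl|g(y)\,\langle e',(\partial^{e_{j}})^{E}f(t-y)\rangle\bigr|\leq\bigl|\langle e',\psi_{j}(y)\rangle\bigr|
\]
for all $e'\in E'$, all $t$ in a neighbourhood, and a.e.\ $y$. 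Here the $E$-valued part $(\partial^{e_{j}})^{E}f(t-y)$ depends on the parameter $t$, so for fixed $y$ you would need one vector $\psi_{j}(y)\in E$ whose pairing with every $e'$ dominates $\sup_{t}|\langle e',(\partial^{e_{j}})^{E}f(t-y)\rangle|$. Already in $E=\R^{2}$ this fails: if the set of values contains two linearly independent vectors, no single $v\in\R^{2}$ satisfies $|\langle e',w\rangle|\leq|\langle e',v\rangle|$ for all $e'$ and all such $w$. In the proof of \eqref{lem:convolution.1} this obstruction does not arise because the $E$-valued factor $f(y)$ is independent of $t$, so $\psi_{j}=C\cdot f$ works; that asymmetry is precisely what breaks your swap. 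Note also that if your argument went through it would yield \eqref{lem:convolution.2} for all $|\beta|\leq k$, which the paper explicitly records as an open question right after the lemma.

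The paper avoids this by not invoking \prettyref{lem:Leibniz} for \eqref{lem:convolution.2} at all. Once \eqref{lem:convolution.1} is established, it tests against $e'\in E'$: for $|\beta|\leq\min(k,n)$,
\[
\langle e',(\partial^{\beta})^{E}(f\ast g)\rangle
=(e'\circ f)\ast(\partial^{\beta})^{\mathbb{K}}g
=(\partial^{\beta})^{\mathbb{K}}(e'\circ f)\ast g
=\langle e',((\partial^{\beta})^{E}f)\ast g\rangle,
\]
using the known scalar identity for the middle equality; separation of points in $E$ then gives \eqref{lem:convolution.2}. The restriction $|\beta|\leq\min(k,n)$ enters because the first step needs \eqref{lem:convolution.1} (hence $|\beta|\leq n$) and the scalar identity needs $e'\circ f\in\mathcal{C}^{|\beta|}$ (hence $|\beta|\leq k$).
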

\begin{proof}
Let $h\colon \mathbb{R}^{d}\times\mathbb{R}^{d}\to E$, $h(y,x):=f(y)g(x-y)$. First, 
we show that $h(\cdot,x)$ is Pettis-integrable on $\mathscr{L}(\mathbb{R}^{d})$ for every $x\in\mathbb{R}^{d}$ 
implying that $f\ast g$ is well-defined. 
We note that $\langle e' ,h(\cdot,x) \rangle\in \mathcal{L}^{1}(\mathbb{R}^{d},\lambda)$ for every $e'\in E'$ 
and $x\in \mathbb{R}^{d}$. 
Let $x\in\mathbb{R}^{d}$ and $\Lambda\in\mathscr{L}(\mathbb{R}^{d})$. We define the linear map
\[
I_{\Lambda,x}\colon E'\to \mathbb{K},\;I_{\Lambda,x}(e'):=\int_{\Lambda}\langle e' ,h(y,x) \rangle\d y.
\]
Setting $K_{f}:=\operatorname{supp}f$ and $K_{g}:=\operatorname{supp}g$, we observe that 
\[
I_{\Lambda,x}(e')=\int_{\Lambda\cap K_{f}}\langle e' ,f(y)g(x-y) \rangle \d y
=\int_{\Lambda\cap(x-K_{g})}\langle e' ,f(y)g(x-y) \rangle \d y.
\]
If $K_{f}=\operatorname{supp}f$ is compact, we get 
\[
|I_{\Lambda,x}(e')|\leq \lambda(K_{f})\sup\bigl\{|e'(z)|\;|\;z\in f(K_{f})g(x-K_{f})\bigr\}.
\]
The set $f(K_{f})g(x-K_{f})$ is compact in $E$ and thus the closure of its absolutely 
convex hull is compact in $E$ as well by \cite[9-2-10 Example, p.\ 134]{Wilansky} 
because $E$ is quasi-complete. Hence it follows that $I_{\Lambda,x}\in(E_{\kappa}')'\cong E$ by the theorem 
of Mackey-Arens meaning that there is $e_{\Lambda}(x)\in E$ such that 
\[
\langle e',e_{\Lambda}(x) \rangle=I_{\Lambda,x}(e')=\int_{\Lambda}\langle e' ,h(y,x) \rangle\d y
\]
for all $e'\in E'$. Thus $h(\cdot,x)$ is Pettis-integrable on 
$\mathscr{L}(\mathbb{R}^{d})$ and 
\[
(f\ast g)(x)=e_{\mathbb{R}^{d}}(x)\underset{\eqref{rem:int.auf.traeger.1}}{=}e_{K_{f}}(x)=e_{x-K_{g}}(x)
\]
for every $x\in\mathbb{R}^{d}$ if $K_{f}=\operatorname{supp}f$ is compact. 
If $K_{g}=\operatorname{supp}g$ is compact, then 
the estimate 
\[
|I_{\Lambda,x}(e')|\leq \lambda(x-K_{g})\sup\bigl\{|e'(z)|\;|\;z\in f(x-K_{g})g(K_{g})\bigr\}
\]
yields to the Pettis-integrability in the same manner.

Let $x\notin\operatorname{supp}f+\operatorname{supp}g$. If $y\notin\operatorname{supp}f$, then $h(y,x)=0$. 
If $y\in\operatorname{supp}f$, then $x-y\notin \operatorname{supp}g$ and thus $h(y,x)=0$. 
Hence we have $h(\cdot,x)=0$ implying 
$\operatorname{supp} (f\ast g)\subset \operatorname{supp}f+\operatorname{supp}g$. 
%Analogously to $h$ we see that $\widetilde{h}(\cdot,x)$ is Pettis-integrable on $\mathscr{L}(\mathbb{R}^{d})$ 
%for every $x\in\mathbb{R}^{d}$ where $\widetilde{h}\colon \mathbb{R}^{d}\times\mathbb{R}^{d}\to E$, 
%$\widetilde{h}(y,x):=f(x-y)g(y)$. Hence $g\ast f$ is well-defined and 
From
\begin{align*}
\langle e' ,(f\ast g)(x)\rangle
&=\int_{\mathbb{R}^{d}}\langle e' ,f(y)g(x-y)\rangle\d y
=\int_{\mathbb{R}^{d}}\langle e' ,f(y)\rangle g(x-y)\d y\\
&=\bigl((e'\circ f)\ast g\bigr)(x)
=\bigl(g\ast(e'\circ f)\bigr)(x)
=\int_{\mathbb{R}^{d}}\langle e' ,g(y)f(x-y)\rangle\d y
\end{align*}
for every $x\in\mathbb{R}^{d}$ and $e'\in E'$, where 
we used the commutativity of scalar convolution for the fourth equation, 
follows 
\[
 (f\ast g)(x)=e_{\mathbb{R}^{d}}(x)=(g\ast f)(x)
\]
for every $x\in\mathbb{R}^{d}$.

Next, we show that $f\ast g\in\mathcal{C}^{n}(\mathbb{R}^{d},E)$ and \eqref{lem:convolution.1} holds 
by applying \prettyref{lem:cont.para} and \ref{lem:Leibniz}. So we have to check that the conditions a)-c) 
of these lemmas are fulfilled.
First, fix $x_{0}\in\mathbb{R}^{d}$, let $\varepsilon>0$ and $\beta\in\mathbb{N}_{0}^{d}$, $|\beta|\leq n$. 
If $K_{f}=\operatorname{supp}f$ is compact, we set 
$h_{f,\beta}:=(\partial_{x}^{\beta})^{E}h_{\mid K_{f}\times \mathbb{B}_{\varepsilon}(x_{0})}$ 
and observe that $h_{\mid K_{f}\times \mathbb{B}_{\varepsilon}(x_{0})}(y,\cdot)
\in\mathcal{C}^{n}(\mathbb{B}_{\varepsilon}(x_{0}),E)$ for every $y\in K_{f}$ (condition b)). 
It follows from the theorem of Mackey-Arens and
\[
\bigl|\int_{\omega}\langle e' ,h_{f,\beta}(y,x)\rangle\d y\bigr|
\leq\lambda(K_{f})\sup\bigl\{|e'(z)|\;|\;z\in f(K_{f})(\partial^{\beta})^{\mathbb{K}}
g(\overline{\mathbb{B}_{\varepsilon}(x_{0})}-K_{f})\bigr\}
\]
for every $e'\in E'$, $\omega\in\mathscr{L}(\mathbb{R}^{d})_{\mid K_{f}}$ and $x\in \mathbb{B}_{\varepsilon}(x_{0})$ 
that $h_{f,\beta}(\cdot,x)$ is Pettis-integrable on $\mathscr{L}(\mathbb{R}^{d})_{\mid K_{f}}$ 
for every $x\in\mathbb{B}_{\varepsilon}(x_{0})$ (condition a)).
Now, we check that condition c) is satisfied. We observe that the estimate 
\[
\bigl|\int_{\omega}\langle e' ,f(y)\rangle\d y\bigr|\leq\lambda(K_{f})\sup\bigl\{|e'(z)|\;|\;z\in f(K_{f})\bigr\}
\]
for every $e'\in E'$ and $\omega\in\mathscr{L}(\mathbb{R}^{d})_{\mid K_{f}}$ implies 
that $f_{\mid K_{f}}$ is Pettis-integrable on $\mathscr{L}(\mathbb{R}^{d})_{\mid K_{f}}$ due to the theorem 
of Mackey-Arens again.
The inequality 
\begin{align*}
|\langle e' ,h_{f,\beta}(y,x)\rangle|&=|\langle e' ,f(y)(\partial_{x}^{\beta})^{\mathbb{K}}[x\mapsto g(x-y)]\rangle|\\
&\leq |\langle e' ,f(y)\rangle|\sup\bigl\{|(\partial^{\beta})^{\mathbb{K}}g(z)|\;|\;
z\in \overline{\mathbb{B}_{\varepsilon}(x_{0})}-K_{f}\bigr\}\\
&\leq |\langle e' ,q_{\overline{\mathbb{B}_{\varepsilon}(x_{0})}-K_{f},n}(g)\cdot f(y)\rangle|
\end{align*}
for every $e'\in E'$ and $(y,x)\in K_{f}\times \mathbb{B}_{\varepsilon}(x_{0})$ with the seminorm 
$q_{\overline{\mathbb{B}_{\varepsilon}(x_{0})}-K_{f},n}$ from \eqref{cv-complete.0} 
yields to condition c) being satisfied. Hence $f\ast g\in\mathcal{C}^{n}(\mathbb{B}_{\varepsilon}(x_{0}),E)$ by 
\prettyref{lem:cont.para} if $n=0$ and by \prettyref{lem:Leibniz} if $n=1$ as well as 
\begin{align*}
\partial_{x_{j}}^{E}(f\ast g)(x)
&=\partial_{x_{j}}^{E}[x\mapsto\int_{\mathbb{R}^{d}}f(y)g(x-y)\d y]\underset{\eqref{rem:int.auf.traeger.1}}{=} 
\partial_{x_{j}}^{E}[x\mapsto\int_{K_{f}}f(y)g(x-y)\d y]\\
&=\int_{K_{f}}f(y)(\partial_{x_{j}})^{\mathbb{K}}[x\mapsto g(x-y)]\d y
\underset{\eqref{rem:int.auf.traeger.1}}{=} \int_{\mathbb{R}^{d}}f(y)(\partial^{e_{j}})^{\mathbb{K}}g(x-y)\d y\\
&=\bigl(f\ast \bigl((\partial^{e_{j}})^{\mathbb{K}}g\bigr)\bigr)(x)
\end{align*}
for every $x\in\mathbb{B}_{\varepsilon}(x_{0})$. Letting $\varepsilon\to\infty$, we obtain the 
result for $n=0$ and $n=1$ if $K_{f}=\operatorname{supp}f$ is compact. 
For $n\geq 2$ it follows from induction over the order $|\beta|$. 
If $K_{g}=\operatorname{supp}g$ is compact, the same approach with
$h_{g,\beta}:=(\partial_{x}^{\beta})^{E}h_{\mid K_{g}\times \mathbb{B}_{\varepsilon}(x_{0})}$ 
instead of $h_{f,\beta}$ proves the statement. 
Furthermore, for $|\beta|\leq\min(k,n)$ we get
\begin{flalign*}
&\quad\langle e',(\partial^{\beta})^{E}(f\ast g)(x)\rangle\\
&=\int_{\mathbb{R}^{d}}\langle e',f(y)(\partial^{\beta})^{\mathbb{K}}g(x-y)\rangle\d y
=\int_{\mathbb{R}^{d}}(e'\circ f)(y)(\partial^{\beta})^{\mathbb{K}}g(x-y)\d y\\
&=\bigl((e'\circ f)\ast\bigl((\partial^{\beta})^{\mathbb{K}}g\bigr)\bigr)(x)
=\bigl((\partial^{\beta})^{\mathbb{K}}(e'\circ f)\ast g\bigr)(x)\\
&=\bigl((e'\circ(\partial^{\beta})^{E} f)\ast g\bigr)(x)
=\int_{\mathbb{R}^{d}}\langle e',(\partial^{\beta})^{E}f(y)g(x-y)\rangle\d y
\end{flalign*}
for every $e'\in E'$ and $x\in\mathbb{R}^{d}$, where we used the corresponding result for 
the scalar convolution for the fourth equation, implying 
$(\partial^{\beta})^{E}(f\ast g)=\bigl((\partial^{\beta})^{E}f\bigr)\ast g$.
\end{proof}

Looking at the lemma above, we see that it differs a bit from the properties known from 
the convolution of two scalar-valued functions. 
It is an open problem whether we actually have $f\ast g\in \mathcal{C}^{\max(k,n)}(\mathbb{R}^{d},E)$ 
and \eqref{lem:convolution.2} for $|\beta|\leq k$ 
under the assumptions of the lemma. But since we only apply the lemma above in the case $n=\infty$, this does not affect us.

We recall the construction of a molifier from \cite[p.\ 155-156]{Treves}. Let 
\[
 \rho\colon\mathbb{R}^{d}\to \mathbb{R},\; 
 \rho(x):=\begin{cases} C \exp(-\tfrac{1}{1-|x|^2})&,|x|<1,\\
           0&,|x|\geq 1,
          \end{cases}
\]
where $C:=\bigl(\int_{\mathbb{B}_{1}(0)}\exp(-\tfrac{1}{1-|x|^2})\d x\bigr)^{-1}$. For $n\in\mathbb{N}$ 
we define the molifier $\rho_{n}$ given by 
$\rho_{n}(x):=n^{d}\rho(nx)$, $x\in\mathbb{R}^{d}$. Then we have $\rho_{n}\in\mathcal{C}^{\infty}_{c}(\mathbb{R}^{d})$, 
$\rho_{n}\geq 0$, $\operatorname{supp}\rho_{n}=\overline{\mathbb{B}_{1/n}(0)}$ 
and $\int_{\mathbb{R}^{d}}\rho_{n}(x)\d x=1$.

We can extend a function $f\in\mathcal{C}^{k}_{c}(\Omega,E)$, $k\in\mathbb{N}_{0,\infty}$ 
and $\Omega\subset\mathbb{R}^{d}$, to a function 
$f_{\operatorname{ex}}\in\mathcal{C}^{k}_{c}(\mathbb{R}^{d},E)$ by setting 
$f_{{\operatorname{ex}}}:=f$ on $\Omega$ and $f_{\operatorname{ex}}:=0$ on $\mathbb{R}^{d}\setminus\Omega$. 
In this way the convolution $f\ast g:=(f_{\operatorname{ex}}\ast g)_{\mid\Omega}$ with a 
function $g\in\mathcal{C}(\mathbb{R}^{d})$ is a well-defined 
function on $\Omega$ if $E$ is quasi-complete, and
we have the following approximation by regularisation in analogy  
to the scalar-valued case (see e.g.\ \cite[Chap.\ 15, Corollary 1, p.\ 158]{Treves}).

\begin{lem}\label{lem:regularization}
Let $E$ be a quasi-complete lcHs, $k\in\mathbb{N}_{0,\infty}$, $\mathcal{V}^{k}$ be a family of locally bounded weights 
on an open set $\Omega\subset\mathbb{R}^{d}$ and $f\in\mathcal{C}^{k}_{c}(\Omega,E)$. 
Then $(f\ast\rho_{n})$ converges to $f$ 
in $\mathcal{CV}^{k}_{0}(\Omega,E)$ as $n\to \infty$.
\end{lem}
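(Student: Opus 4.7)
My plan follows Tr\`{e}ves' classical strategy for the scalar-valued case, adapted to our vector-valued weighted setting and using \prettyref{lem:convolution}.

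First I would localise the support uniformly in $n$. Set $K:=\operatorname{supp}f$, which is compact in $\Omega$. Since $\Omega$ is open and $K$ compact, there exists $\delta_{0}>0$ with $K':=K+\overline{\mathbb{B}_{\delta_{0}}(0)}\subset\Omega$. By \prettyref{lem:convolution} we have $\operatorname{supp}(f_{\operatorname{ex}}\ast\rho_{n})\subset K+\overline{\mathbb{B}_{1/n}(0)}\subset K'$ for $n\geq 1/\delta_{0}$, and moreover $f_{\operatorname{ex}}\ast\rho_{n}\in\mathcal{C}^{k}(\mathbb{R}^{d},E)$. Consequently $f\ast\rho_{n}=(f_{\operatorname{ex}}\ast\rho_{n})_{\mid\Omega}\in\mathcal{C}^{k}_{c}(\Omega,E)\subset\mathcal{CV}^{k}_{0}(\Omega,E)$ by \prettyref{rem:compact.supp}, so the statement at least makes sense.

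Next I would reduce to uniform approximation of continuous compactly supported functions. Fix $j\in J$, $l\in\langle k\rangle$, $\alpha\in\mathfrak{A}$; note $l$ is a finite integer. By \eqref{lem:convolution.2} of \prettyref{lem:convolution}, for $|\beta|\leq l\leq k$ one has $(\partial^{\beta})^{E}(f\ast\rho_{n})=((\partial^{\beta})^{E}f)\ast\rho_{n}$. Since both sides have support in $K'$ for $n\geq 1/\delta_{0}$, and $C_{j,l}:=\sup_{x\in K'}\nu_{j,l}(x)<\infty$ by local boundedness of $\mathcal{V}^{k}$, I obtain
\[
|f\ast\rho_{n}-f|_{j,l,\alpha}\leq C_{j,l}\sup_{|\beta|\leq l}\sup_{x\in\mathbb{R}^{d}}p_{\alpha}\bigl(g_{\beta}\ast\rho_{n}(x)-g_{\beta}(x)\bigr),
\]
where $g_{\beta}:=((\partial^{\beta})^{E}f)_{\operatorname{ex}}\in\mathcal{C}^{0}_{c}(\mathbb{R}^{d},E)$. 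The supremum over $|\beta|\leq l$ is a maximum over a finite set, so it suffices to show, for each fixed $g\in\mathcal{C}^{0}_{c}(\mathbb{R}^{d},E)$ and each $\alpha\in\mathfrak{A}$, that $\sup_{x\in\mathbb{R}^{d}}p_{\alpha}(g\ast\rho_{n}(x)-g(x))\to 0$ as $n\to\infty$.

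For this last step I would exploit $\int_{\mathbb{R}^{d}}\rho_{n}(y)\d y=1$, $\rho_{n}\geq 0$, and $\operatorname{supp}\rho_{n}\subset\overline{\mathbb{B}_{1/n}(0)}$ to write
\[
(g\ast\rho_{n})(x)-g(x)=\int_{\mathbb{R}^{d}}\rho_{n}(y)\bigl(g(x-y)-g(x)\bigr)\d y
\]
as a Pettis-integral. Applying the dual characterisation $p_{\alpha}(\cdot)=\sup\{|e'(\cdot)|\;|\;e'\in U_{\alpha}^{\circ}\}$ for a suitable polar set, together with $\rho_{n}\geq 0$, yields the familiar bound
\[
p_{\alpha}\bigl((g\ast\rho_{n})(x)-g(x)\bigr)\leq\int_{\mathbb{R}^{d}}\rho_{n}(y)\,p_{\alpha}\bigl(g(x-y)-g(x)\bigr)\d y.
\]
Since $g$ is continuous with compact support, $g$ is uniformly $p_{\alpha}$-continuous on $\mathbb{R}^{d}$, so for $\varepsilon>0$ there is $\delta>0$ with $p_{\alpha}(g(x-y)-g(x))<\varepsilon$ whenever $|y|<\delta$, uniformly in $x$. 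For $n>1/\delta$ the integrand above vanishes outside $\overline{\mathbb{B}_{\delta}(0)}$, giving the bound $\varepsilon$ uniformly in $x$.

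The main obstacle I anticipate is a clean justification of passing $p_{\alpha}$ inside the vector-valued Pettis integral; this is standard (Hahn--Banach plus $\rho_{n}\geq 0$) but needs to be stated carefully so that no additional completeness assumption beyond quasi-completeness sneaks in. The rest is bookkeeping: checking that $K+\overline{\mathbb{B}_{1/n}(0)}\subset\Omega$ for large $n$ (which uses only that $K$ is a compact subset of the open set $\Omega$, not the cut-off criterion), and invoking local boundedness of $\mathcal{V}^{k}$ to bound $\nu_{j,l}$ on $K'$.
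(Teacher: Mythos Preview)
Your proposal is correct and follows essentially the same route as the paper's proof: localise the supports uniformly in $n$ inside a fixed compact $K'\subset\Omega$, use \eqref{lem:convolution.2} to shift derivatives onto $f$, exploit uniform $p_{\alpha}$-continuity of each $(\partial^{\beta})^{E}f_{\operatorname{ex}}\in\mathcal{C}^{0}_{c}(\mathbb{R}^{d},E)$, and finish with local boundedness of $\mathcal{V}^{k}$ on $K'$. The only organisational difference is that you factor out the step $\sup_{x}p_{\alpha}(g\ast\rho_{n}-g)\to 0$ for $g\in\mathcal{C}^{0}_{c}(\mathbb{R}^{d},E)$ as a separate reduction, whereas the paper runs the chain of inequalities directly for each $\beta$; your explicit mention of the Hahn--Banach/polar argument for $p_{\alpha}\bigl(\int\cdot\bigr)\leq\int p_{\alpha}(\cdot)$ is exactly the justification the paper uses tacitly at the step invoking \eqref{lem:regularization.1}.
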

\begin{proof}
Due to \prettyref{lem:convolution} we obtain that 
$f_{\operatorname{ex}}\ast\rho_{n}\in\mathcal{C}^{\infty}_{c}(\mathbb{R}^{d},E)$ 
for every $n\in\mathbb{N}$. Since $\mathcal{V}^{k}$ is locally bounded on $\Omega$, 
we derive $f\ast\rho_{n}\in \mathcal{CV}^{k}_{0}(\Omega,E)$. 
Let $\varepsilon>0$, $j\in J$, $l\in\langle k \rangle$ and $\alpha\in\mathfrak{A}$. 
For $\beta\in\mathbb{N}_{0}^{d}$, $|\beta|\leq l$, 
there is $\delta_{\beta}>0$ such that for all $x\in\Omega$ and $y\in\mathbb{R}^{d}$ with 
$|y|=|(x-y)-x|\leq \delta_{\beta}$ we have
\begin{equation}\label{lem:regularization.1}
p_{\alpha}\bigl((\partial^{\beta})^{E}f_{\operatorname{ex}}(x-y)-(\partial^{\beta})^{E}f(x)\bigr)<\varepsilon
\end{equation}
because the function $(\partial^{\beta})^{E}f_{\operatorname{ex}}$ is uniformly continuous on whole $\mathbb{R}^{d}$ 
as it is continuous with compact support. Therefore we deduce for all $n>1/\delta_{\beta}$ that 
$\operatorname{supp}\rho_{n}=\overline{\mathbb{B}_{1/n}(0)}
\subset \overline{\mathbb{B}_{\delta_{\beta}}(0)}$ and hence
\begin{flalign*}
&\quad\;\; p_{\alpha}\bigl((\partial^{\beta})^{E}(f\ast\rho_{n}-f)(x)\bigr)\\
&\underset{\eqref{lem:convolution.2}}{=}p_{\alpha}\bigl(\bigl((\partial^{\beta})^{E}f\bigr)\ast\rho_{n}(x)
-(\partial^{\beta})^{E}f(x)\bigr)\\
&\;\;=p_{\alpha}\bigl(\rho_{n}\ast\bigl((\partial^{\beta})^{E}f\bigr)(x)
-(\partial^{\beta})^{E}f(x)\bigr)\\
&\;\;=p_{\alpha}\bigr(\int_{\mathbb{R}^{d}}(\partial^{\beta})^{E}
f_{\operatorname{ex}}(x-y)\rho_{n}(y)\d y-(\partial^{\beta})^{E}f(x)\bigr) \\
&\;\;=p_{\alpha}\bigl(\int_{\mathbb{R}^{d}}(\partial^{\beta})^{E}f_{\operatorname{ex}}(x-y)\rho_{n}(y)
-(\partial^{\beta})^{E}f(x)\rho_{n}(y)\d y \bigr)\\
&\;\underset{\eqref{rem:int.auf.traeger.1}}{=}p_{\alpha}\bigl(\int_{\overline{\mathbb{B}_{1/n}(0)}}(\partial^{\beta})^{E}f_{\operatorname{ex}}(x-y)\rho_{n}(y)
-(\partial^{\beta})^{E}f(x)\rho_{n}(y)\d y \bigr)\\
&\underset{\eqref{lem:regularization.1}}{\leq}\varepsilon\int_{\mathbb{R}^{d}}\rho_{n}(y)\d y=\varepsilon
\end{flalign*}
by \prettyref{lem:convolution} for every $x\in\Omega$. As $0\in\operatorname{supp}\rho_{n}$, we get that 
\[
\operatorname{supp}(\partial^{\beta})^{E}(f\ast\rho_{n}-f)\subset( \operatorname{supp}f+\operatorname{supp}\rho_{n})
=\bigl( \operatorname{supp}f+\overline{\mathbb{B}_{1/n}(0)}\bigr) 
\]
for every $|\beta|\leq l$ and $n\in\mathbb{N}$ by virtue of \prettyref{lem:convolution}. 
Since $\operatorname{supp}f\subset\Omega$ is compact and $\Omega$ open, there is $r>0$ such that 
$\bigl( \operatorname{supp}f+\overline{\mathbb{B}_{r}(0)}\bigr)\subset\Omega$ yielding
\[
\operatorname{supp}(\partial^{\beta})^{E}(f\ast\rho_{n}-f)
\subset \bigl( \operatorname{supp}f+\overline{\mathbb{B}_{r}(0)}\bigr)=:K
\]
for all $n\geq 1/r$.
Choosing $\delta:=\min\{\delta_{\beta}\;|\;\beta\in\mathbb{N}_{0}^{d},\,|\beta|\leq l\}>0$, 
we obtain for all $n>\max\{1/\delta,1/r\}$ that
\begin{align*}
\bigl|f\ast\rho_{n}-f\bigr|_{j,l,\alpha}
&=\sup_{\substack{x\in K\\ \beta\in\mathbb{N}_{0}^{d},|\beta|\leq l}}
p_{\alpha}\bigl((\partial^{\beta})^{E}(f\ast\rho_{n}-f)(x)\bigr)\nu_{j,l}(x)
\leq\varepsilon \sup_{x\in K}\nu_{j,l}(x)
\end{align*}
which implies our statement since $\mathcal{V}^{k}$ is locally bounded on $\Omega$ and $K\subset\Omega$ 
is compact.
\end{proof}
 
\section{Approximation property}

Finally, we dedicate our last section to our main theorem. We start with the case $k=0$.

\begin{prop}\label{prop:localization}
Let $E$ be an lcHs and $\mathcal{V}^{0}$ a family of locally bounded weights 
which is locally bounded away from zero on a locally compact Hausdorff space $\Omega$. 
Then the following statements hold.
\begin{enumerate}
\item [a)] $\mathcal{C}^{0}_{c}(\Omega)\otimes E$ is dense in $\mathcal{CV}^{0}_{0}(\Omega,E)$.
\item [b)] For any $f\in\mathcal{C}^{0}_{c}(\Omega,E)$ and any open neighbourhood $V$ of $\operatorname{supp}f$, 
for every $\varepsilon>0$, $j\in J$ and $\alpha\in\mathfrak{A}$, 
there is $g\in\mathcal{C}^{0}_{c}(\Omega)\otimes E$ such 
that $\operatorname{supp}g\subset V$ and $|f-g|_{j,0,\alpha}\leq\varepsilon$.
\item [c)] If $E$ is complete, then 
\[
 \mathcal{CV}^{0}_{0}(\Omega,E)\cong\mathcal{CV}^{0}_{0}(\Omega)\varepsilon E
 \cong\mathcal{CV}^{0}_{0}(\Omega)\widehat{\otimes}_{\varepsilon} E.
\]
\item [d)] $\mathcal{CV}^{0}_{0}(\Omega)$ has the approximation property.
\end{enumerate}
\end{prop}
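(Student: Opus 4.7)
The plan is to prove (b) first as the technical heart, then derive (a), (c), and (d) in sequence.

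For (b), given $f \in \mathcal{C}^{0}_{c}(\Omega, E)$ with $K := \operatorname{supp} f$, an open neighbourhood $V \supset K$, and data $\varepsilon, j, \alpha$, I would construct a Bourbaki-style partition-of-unity approximation. Since $\Omega$ is locally compact Hausdorff, Urysohn's lemma (a special case of the Bourbaki partition) yields $\chi \in \mathcal{C}^{0}_{c}(\Omega)$ with $0 \leq \chi \leq 1$, $\chi = 1$ on a neighbourhood $N$ of $K$, and $K' := \operatorname{supp} \chi \subset V$. Set $M := \max\bigl(1, \sup_{x \in K'} \nu_{j,0}(x)\bigr)$, which is finite by local boundedness of $\mathcal{V}^{0}$. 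By uniform continuity of $f$ on $K'$ with respect to $p_{\alpha}$, cover $K'$ by finitely many open sets $U_{1}, \ldots, U_{n} \subset V$ on each of which the $p_{\alpha}$-oscillation of $f$ is at most $\varepsilon/M$. The Bourbaki partition of unity from \cite[Chap.\ IX, \S4.3, Corollary, p.\ 186]{bourbakiII} produces $\phi_{i} \in \mathcal{C}^{0}_{c}(\Omega)$ with $\phi_{i} \geq 0$, $\operatorname{supp}\phi_{i} \subset U_{i}$ and $\sum_{i} \phi_{i} = 1$ on $K'$. Picking any $x_{i} \in U_{i}$, define
\[
 g := \sum_{i=1}^{n} (\chi \phi_{i}) \otimes f(x_{i}) \in \mathcal{C}^{0}_{c}(\Omega) \otimes E,
\]
which has support contained in $K' \subset V$.

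A short case analysis on the pointwise norm $p_{\alpha}(f(x) - g(x))$ then controls $|f-g|_{j,0,\alpha}$. Off $K'$ both $f$ and $g$ vanish. On $K'$ one rewrites
\[
 f(x) - g(x) = (1 - \chi(x)) f(x) + \chi(x) \sum_{i} \phi_{i}(x) \bigl(f(x) - f(x_{i})\bigr).
\]
If $x \in K \subset N$ the first summand vanishes and the second is bounded by $\varepsilon/M$ via the oscillation condition; if $x \in K' \setminus K$ then $f(x) = 0$, and for any $i$ with $\phi_{i}(x) > 0$ we have $x \in U_{i}$, so the oscillation bound forces $p_{\alpha}(f(x_{i})) < \varepsilon/M$ and hence $p_{\alpha}(g(x)) < \varepsilon/M$. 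Multiplying by $\nu_{j,0}(x) \leq M$ on $K'$ yields $|f - g|_{j,0,\alpha} \leq \varepsilon$.

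Statement (a) reduces to (b) by a standard cut-off: for $f \in \mathcal{CV}^{0}_{0}(\Omega, E)$, pick a compact $K$ with $|f|_{\Omega \setminus K, j, 0, \alpha} < \varepsilon/2$ and a Urysohn function $\chi \in \mathcal{C}^{0}_{c}(\Omega)$ with $0 \leq \chi \leq 1$ and $\chi = 1$ on $K$; then $\chi f \in \mathcal{C}^{0}_{c}(\Omega, E)$ and $|f - \chi f|_{j,0,\alpha} \leq |f|_{\Omega \setminus K, j, 0, \alpha} < \varepsilon/2$, so applying (b) to $\chi f$ with $V = \Omega$ and tolerance $\varepsilon/2$ produces the required $g$. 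Statement (c) follows from (a) combined with \prettyref{cor:tensor.embed}(b), using that \prettyref{prop:cv-complete} provides completeness of $\mathcal{CV}^{0}_{0}(\Omega, E)$ and $\mathcal{CV}^{0}_{0}(\Omega)$. For (d), let $F$ be any Banach space: (c) identifies $\mathcal{CV}^{0}_{0}(\Omega) \varepsilon F \cong \mathcal{CV}^{0}_{0}(\Omega, F)$, and (a) then gives density of $\mathcal{CV}^{0}_{0}(\Omega) \otimes F$ in $\mathcal{CV}^{0}_{0}(\Omega) \varepsilon F$, which is Schwartz's Banach-space characterisation of the approximation property recalled in Section 2.

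The main obstacle is the case analysis in (b) on $K' \setminus K$, where $f$ has already vanished but $g$ need not; this is resolved by noting that $\sum_{i} \phi_{i} = 1$ on all of $K'$, so any index contributing to $g(x)$ satisfies $x \in U_{i}$, and the oscillation bound then forces $f(x_{i})$ itself to be small. The local boundedness of $\mathcal{V}^{0}$ is precisely what converts the uniform pointwise bound on $K'$ into a bound on the weighted seminorm, which is why this hypothesis (rather than upper semicontinuity as in \prettyref{thm:bierstedt}) is the natural one here.
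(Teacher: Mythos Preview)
Your argument is correct and uses the same core engine as the paper—a small-oscillation cover of a compact set, the Bourbaki partition of unity, and the approximant $g=\sum_i\phi_i\otimes f(x_i)$ controlled via local boundedness of the weights—but the organization is inverted. The paper proves (a) directly for general $f\in\mathcal{CV}^{0}_{0}(\Omega,E)$ in one pass (handling the tail outside a compact set and the partition-of-unity approximation simultaneously, ending with the bound $|f-g|_{j,0,\alpha}<4\varepsilon$), and then obtains (b) by intersecting the covering sets $V_{x_i}$ with the given neighbourhood $V$. You instead prove (b) first for compactly supported $f$ (where there is no tail and the estimate is cleaner) and then deduce (a) via a Urysohn cut-off $\chi f$; this is the $k=0$ analogue of the paper's \prettyref{lem:comp.supp.dense}, available here without any cut-off criterion because a continuous bump suffices. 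Both routes arrive at the same place with the same ingredients; yours separates the two error sources (truncation and discretisation) more explicitly.

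One technical point to tighten: the Bourbaki partition-of-unity result you cite is stated for normal spaces, and a locally compact Hausdorff $\Omega$ need not be normal. The paper handles this by passing to a compact (hence normal) neighbourhood $\overline{W}\supset K$ and constructing the $\varphi_i$ there, then extending trivially. In your argument you should likewise choose the $U_i$ relatively compact in $\Omega$ and apply the partition of unity inside a compact set containing $K'$, so that the resulting $\phi_i$ extend to elements of $\mathcal{C}^{0}_{c}(\Omega)$.
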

\begin{proof}
First, we consider part a). Due to \prettyref{cor:tensor.embed} a) and \prettyref{rem:compact.supp}
$\mathcal{C}^{0}_{c}(\Omega)\otimes E$ can be identified with a subspace of $\mathcal{CV}^{0}_{0}(\Omega,E)$ 
equipped with the induced topology since $\mathcal{V}^{0}$ is locally bounded and locally bounded away from zero.\\ 
Let $f\in\mathcal{CV}^{0}_{0}(\Omega,E)$, $\varepsilon>0$, $j\in J$ and 
$\alpha\in\mathfrak{A}$ and we fix the notation $\nu_{j}:=\nu_{j,0}$. 
Then there is a compact set $\widetilde{K}\subset\Omega$ such that 
\[
|f|_{\Omega\setminus \widetilde{K},j,0,\alpha}=
\sup_{x\in \Omega\setminus \widetilde{K}}p_{\alpha}(f(x))
 \nu_{j}(x)<\varepsilon .
\]
Let $K:=\widetilde{K}$. Since $\Omega$ is locally compact, every $w\in K$ has an open, 
relatively compact neighbourhood $U_{w}\subset\Omega$. 
As $K$ is compact and $K\subset \bigcup_{w\in K} U_{w}$, there are $m\in\mathbb{N}$ and $w_{i}\in K$, $1\leq i\leq m$, such that
\[
 K\subset\bigcup_{i=1}^{m} U_{w_{i}}=:W\subset\Omega.
\]
The set $W$ is open and relatively compact because it is a finite union of open, relatively compact sets. 
The local boundedness of $\mathcal{V}^{0}$ and relative compactness of $W$ imply that 
\[
N:=1+\sup_{x\in \overline{W}}\nu_{j}(x)<\infty.
\]
For $x\in K$ we define $V_{x}:=\{y\in\Omega\;|\; p_{\alpha}(f(y)-f(x))<\frac{\varepsilon}{N}\}$. 
Thus we have $V_{x}=f^{-1}\left(B_{\alpha}(f(x),\frac{\varepsilon}{N})\right)$, 
where $B_{\alpha}(f(x),\frac{\varepsilon}{N}):=\{e\in E\;|\;p_{\alpha}(e-f(x))<\frac{\varepsilon}{N}\}$, 
implying that $V_{x}$ is open in $\Omega$ since $f$ is continuous. 
Hence we get $K\subset \bigcup_{x\in K}V_{x}$ and conclude that there are $n\in\mathbb{N}$ and 
$x_{i}\in K$, $1\leq i\leq n$, such that $K\subset \bigcup_{i=1}^{n}V_{x_{i}}$ from the compactness of $K$. 
We note that
\begin{equation}\label{prop:localization.1}
 K=(K\cap \overline{W})\subset\bigcup_{i=1}^{n}(V_{x_{i}}\cap \overline{W}) .
\end{equation}
The sets $V_{x_{i}}\cap \overline{W}$ are open in the compact Hausdorff space 
$\overline{W}$ with respect to the topology induced by $\Omega$. 
Since the compact Hausdorff space $\overline{W}$ is normal by \cite[Chap.\ IX, \S4.1, Proposition 1, p.\ 181]{bourbakiII} 
and $K$ is closed in $\overline{W}$, there is a family of non-negative real-valued continuous functions 
$\left(\varphi_{i}\right)$ with $\operatorname{supp}\varphi_{i}\subset (V_{x_{i}}\cap \overline{W})$ such that 
$\sum_{i=1}^{n}\varphi_{i}=1$ on $K$ and $\sum_{i=1}^{n}\varphi_{i}\leq 1$ on $\overline{W}$ 
by \cite[Chap.\ IX, \S4.3, Corollary, p.\ 186]{bourbakiII}. 
By trivially extending $\varphi_{i}$ on $\Omega\setminus\overline{W}$, we obtain $\varphi_{i}\in\mathcal{C}^{0}_{c}(\Omega)$ 
because $\overline{W}$ is compact. 
We define 
\[
g:=\sum_{i=1}^{n}\varphi_{i}\otimes f(x_{i})\in\mathcal{C}^{0}_{c}(\Omega)\otimes E 
\]
and observe $\operatorname{supp}g \subset \bigcup_{i=1}^{n} (V_{x_{i}}\cap \overline{W})$.
If $x\in K$, then $\varphi_{i}(x)p_{\alpha}(f(x)-f(x_{i}))=0$ if $x\notin V_{x_{i}}\cap \overline{W}$, and 
\begin{align*}
p_{\alpha}(f(x)-g(x))&=
p_{\alpha}\bigl(\sum_{i=1}^{n}\varphi_{i}(x)(f(x)-f(x_{i}))\bigr)
\leq \sum_{i=1}^{n}\varphi_{i}(x)p_{\alpha}(f(x)-f(x_{i}))\\
&\leq \sum_{i=1}^{n}\varphi_{i}(x)\frac{\varepsilon}{N}
=\frac{\varepsilon}{N}
\end{align*}
yielding to
\begin{align*}
\sup_{x\in K}p_{\alpha}((f-g)(x))\nu_{j}(x)
 &\leq \sup_{x\in K}\frac{\varepsilon}{N}\nu_{j}(x)\leq \sup_{x\in \overline{W}}\frac{\varepsilon}{N}\nu_{j}(x)
 =\frac{\varepsilon}{N}\cdot(N-1)<\varepsilon .
\end{align*}
If $x\notin K$, then $\varphi_{i}(x)f(x_{i})=0$ if $x\notin (V_{x_{i}}\cap \overline{W})\setminus K$. 
If $x\in (V_{x_{i}}\cap \overline{W})\setminus K$, then
\begin{align*}
p_{\alpha}(\varphi_{i}(x)f(x_{i}))
&\leq\varphi_{i}(x)\bigl(p_{\alpha}(f(x_{i})-f(x))+p_{\alpha}(f(x))\bigr)
\leq \varphi_{i}(x)\bigl(\frac{\varepsilon}{N}+p_{\alpha}(f(x))\bigr)
\end{align*}
yielding to
\begin{flalign*}
&\;\;\;\:|f-g|_{\Omega\setminus K,j,0,\alpha}\\
&=\sup_{x\in \Omega\setminus K}p_{\alpha}((f-g)(x))
 \nu_{j}(x)
 \leq\sup_{x\in \Omega\setminus K}
 \bigl(p_{\alpha}(f(x))+p_{\alpha}(g(x))\bigr)\nu_{j}(x)\\
 &\leq \varepsilon+
 \sup_{x\in \Omega\setminus K}\sum_{i=1}^{n}p_{\alpha}(\varphi_{i}(x)f(x_{i}))\nu_{j}(x)
 \leq \varepsilon+
 \sup_{x\in \Omega\setminus K}\sum_{i=1}^{n}\varphi_{i}(x)\bigl(\frac{\varepsilon}{N}+p_{\alpha}(f(x))\bigr)\nu_{j}(x)\\
 &\leq 2\varepsilon+\frac{\varepsilon}{N}\sup_{x\in \Omega\setminus K}\sum_{i=1}^{n}\varphi_{i}(x)\nu_{j}(x)
 \leq 2\varepsilon+\frac{\varepsilon}{N}\sup_{x\in \overline{W}}\sum_{i=1}^{n}\varphi_{i}(x)\nu_{j}(x)
 \leq 2\varepsilon+\frac{\varepsilon}{N}\cdot (N-1)<3\varepsilon
\end{flalign*}
implying 
\[
\left|f-g\right|_{j,0,\alpha}< 4\varepsilon
\]
which proves part a).

Part c) follows from a) and \prettyref{cor:tensor.embed} b) because $\mathcal{CV}^{0}_{0}(\Omega)$ 
is complete by \prettyref{prop:cv-complete}. Part d) is implied by part c). 
Let us turn to part b). Let $f\in\mathcal{C}^{0}_{c}(\Omega,E)$ and $V$ be an open neighbourhood 
of $\widetilde{K}:=\operatorname{supp}f$. Then we can replace \eqref{prop:localization.1} by 
\[
K=(K\cap V\cap\overline{W})\subset\bigcup_{i=1}^{n}(V_{x_{i}}\cap V\cap \overline{W}) 
\]
and then the open sets $V_{x_{i}}$ by the open sets $V_{x_{i}}\cap V$ in what follows \eqref{prop:localization.1}. 
This gives 
\[
\operatorname{supp}g \subset \bigl(\bigcup_{i=1}^{n} (V_{x_{i}}\cap V\cap \overline{W})\bigr)\subset V
\]
proving b).
\end{proof}

If $\Omega$ is an open subset of $\mathbb{R}^{d}$, we can choose a smooth partition of unity 
(see e.g.\ \cite[Theorem 1.4.5, p.\ 28]{H1}) and even get that $\mathcal{C}^{\infty}_{c}(\Omega)\otimes E$ is dense 
in $\mathcal{CV}^{0}_{0}(\Omega,E)$ under the assumptions of the proposition above.

The proof of part a) is a modification of the proof of \cite[5.1 Satz, p.\ 204]{B1} by Bierstedt. 
Since $\Omega$ is locally compact and not just a completely regular Hausdorff space, 
we can use the partition of unity from \cite[Chap.\ IX, \S4.1, Proposition 1, p.\ 181]{bourbakiII}. 
Bierstedt has to use the partition of unity from \cite[23, Lemma 2, p.\ 71]{nachbin1965} 
and due to the assumptions of this lemma he can not choose $K=\widetilde{K}$ but has to use
\[
  K':=\{x\in\Omega\;|\;p_{\alpha}(f(x))\nu_{j}(x)\geq\varepsilon\}\subset \widetilde{K} .
\]
Bierstedt's assumption that $\nu_{j}$ is upper semi-continuous guarantees that $K'$ is closed 
and thus compact as a closed subset of the compact set $\widetilde{K}$.  
Choosing $K:= K'$, the proof above works as well where the existence 
of the open set $W\subset\Omega$ is a consequence of the upper semi-continuity of $\nu_{j}$ again. 
Comparing \prettyref{thm:bierstedt} and \prettyref{prop:localization}, 
we see that \prettyref{thm:bierstedt} is far more general concerning the 
spaces $\Omega$ involved but the condition of $\mathcal{V}^{0}$ being a locally bounded family 
in \prettyref{prop:localization} is weaker than the condition 
of being a family of upper semi-continuous weights in \prettyref{thm:bierstedt}. 
Let us phrase our main theorem.

\begin{thm}\label{thm:weighted.diff}
Let $E$ be an lcHs, 
$k\in\mathbb{N}_{\infty}$ and $\mathcal{V}^{k}$ be a family of locally bounded weights 
which is locally bounded away from zero on an open set $\Omega\subset\mathbb{R}^{d}$. 
Let $\mathcal{CV}^{k}_{0}(\Omega)$ be barrelled and $\mathcal{C}^{k}_{c}(\Omega,E)$ 
dense in $\mathcal{CV}^{k}_{0}(\Omega,E)$.
Then the following statements hold.
\begin{enumerate}
\item [a)] $\mathcal{C}^{\infty}_{c}(\Omega)\otimes E$ is dense in $\mathcal{CV}^{k}_{0}(\Omega,E)$.
\item [b)] If $E$ is complete, then 
\[
 \mathcal{CV}^{k}_{0}(\Omega,E)\cong\mathcal{CV}^{k}_{0}(\Omega)\varepsilon E
 \cong\mathcal{CV}^{k}_{0}(\Omega)\widehat{\otimes}_{\varepsilon} E.
\]
\item [c)] $\mathcal{CV}^{k}_{0}(\Omega)$ has the approximation property.
\end{enumerate}
\end{thm}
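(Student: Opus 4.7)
The proof follows the three-step approximation strategy outlined after \prettyref{thm:treves}. Parts b) and c) are immediate consequences of part a): the space $\mathcal{CV}^{k}_{0}(\Omega)$ is complete by \prettyref{prop:cv-complete}, so \prettyref{cor:tensor.embed} b) upgrades density of $\mathcal{C}^{\infty}_{c}(\Omega)\otimes E$ in $\mathcal{CV}^{k}_{0}(\Omega,E)$ to the two isomorphisms in b), and taking $E$ arbitrary complete in b) gives c). The task therefore reduces to proving a), i.e.\ that for any $f\in\mathcal{CV}^{k}_{0}(\Omega,E)$, any $\varepsilon>0$, and any seminorm $|\cdot|_{j,l,\alpha}$, there is some $h\in\mathcal{C}^{\infty}_{c}(\Omega)\otimes E$ with $|f-h|_{j,l,\alpha}<\varepsilon$.

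I will construct $h$ in three successive approximations. \emph{Step 1:} the density hypothesis produces $\widetilde{f}\in\mathcal{C}^{k}_{c}(\Omega,E)$ with $|f-\widetilde{f}|_{j,l,\alpha}<\varepsilon/3$. \emph{Step 2:} viewing $\widetilde{f}$ as an element of $\mathcal{C}^{k}_{c}(\Omega,\widehat{E})$ (where $\widehat{E}$ is quasi-complete) and extending $p_{\alpha}$ continuously to $\widehat{E}$, \prettyref{lem:regularization} yields some $n\in\mathbb{N}$, which I take so large that also $\operatorname{supp}\widetilde{f}+\overline{\mathbb{B}_{1/n}(0)}\subset\Omega$, with $|\widetilde{f}-\widetilde{f}\ast\rho_{n}|_{j,l,\alpha}<\varepsilon/3$. \emph{Step 3:} fix an open neighbourhood $V$ of $\operatorname{supp}\widetilde{f}$ with $\overline{V}$ compact and $\overline{V}+\overline{\mathbb{B}_{1/n}(0)}\subset\Omega$; use local boundedness away from zero of $\mathcal{V}^{k}$ to pick an index $\widetilde{j}\in J$ with $\inf_{\overline{V}}\nu_{\widetilde{j},0}>0$; and apply \prettyref{prop:localization} b) to $\widetilde{f}$ at the accuracy $\eta>0$ (to be chosen) and seminorm $|\cdot|_{\widetilde{j},0,\alpha}$, with the neighbourhood $V$, producing $g\in\mathcal{C}^{0}_{c}(\Omega)\otimes E$ with $\operatorname{supp}g\subset V$ and $|\widetilde{f}-g|_{\widetilde{j},0,\alpha}\leq\eta$.

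Now set $h:=g\ast\rho_{n}$. Since $g=\sum_{i=1}^{m}\varphi_{i}\otimes e_{i}$ with $\varphi_{i}\in\mathcal{C}^{0}_{c}(\Omega)$, we have $h=\sum_{i=1}^{m}(\varphi_{i}\ast\rho_{n})\otimes e_{i}\in\mathcal{C}^{\infty}_{c}(\Omega)\otimes E$, and the support condition on $V$ guarantees $\operatorname{supp}h\subset\Omega$. The crucial estimate is that $|\widetilde{f}\ast\rho_{n}-g\ast\rho_{n}|_{j,l,\alpha}<\varepsilon/3$ for $\eta$ small enough; by \eqref{lem:convolution.1} we may move derivatives onto $\rho_{n}$, so for $|\beta|\leq l$ and $x$ in the compact set $\overline{V}+\overline{\mathbb{B}_{1/n}(0)}$ (outside which the difference vanishes),
\[
p_{\alpha}\bigl((\partial^{\beta})^{E}((\widetilde{f}-g)\ast\rho_{n})(x)\bigr)\leq \bigl(\sup_{y\in\overline{V}}p_{\alpha}((\widetilde{f}-g)(y))\bigr)\,\lambda(\overline{V})\,M_{n,l},
\]
where $M_{n,l}:=\max_{|\beta|\leq l}\sup_{z\in\mathbb{R}^{d}}|(\partial^{\beta})^{\mathbb{K}}\rho_{n}(z)|$. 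The unweighted sup on the right is controlled by $(\inf_{\overline{V}}\nu_{\widetilde{j},0})^{-1}|\widetilde{f}-g|_{\widetilde{j},0,\alpha}\leq(\inf_{\overline{V}}\nu_{\widetilde{j},0})^{-1}\eta$, while $\sup_{\overline{V}+\overline{\mathbb{B}_{1/n}(0)}}\nu_{j,l}<\infty$ by local boundedness; the product of these finite constants is absorbed by choosing $\eta$ small enough. Summing the three estimates yields $|f-h|_{j,l,\alpha}<\varepsilon$.

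The main obstacle is the order of parameter choices in Step 3: one must fix $n$ first (so that the support enlargement $\overline{\mathbb{B}_{1/n}(0)}$ is known), then shrink $V$ inside $\Omega$, then choose $\widetilde{j}$ using local boundedness away from zero on the resulting compact $\overline{V}$, and only then invoke \prettyref{prop:localization} b) with the accuracy $\eta$ — which is forced to depend on $n$ through $M_{n,l}$, on $\widetilde{j}$ through $\inf_{\overline{V}}\nu_{\widetilde{j},0}$, and on $j,l$ through $\sup_{\overline{V}+\overline{\mathbb{B}_{1/n}(0)}}\nu_{j,l}$. Only the combined use of both hypotheses on $\mathcal{V}^{k}$ — local boundedness and local boundedness away from zero — makes these constants simultaneously finite.
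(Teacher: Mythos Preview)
Your proof is correct and follows essentially the same three-step approach as the paper: approximate by a compactly supported function, regularise via convolution with a mollifier (passing to the completion $\widehat{E}$ so that \prettyref{lem:regularization} applies), and then invoke \prettyref{prop:localization} b) for the $k=0$ case to replace the compactly supported function by a tensor before convolving. The only cosmetic differences are that the paper fixes the relatively compact neighbourhood $V$ first and then chooses the mollifier index (your order is reversed but equally valid), and that the paper bounds the convolution term by the $L^{1}$-norm $\int|\partial^{\beta}\rho_{n}|\,\d y$ of the mollifier derivatives rather than your $\lambda(\overline{V})\cdot M_{n,l}$; both estimates work.
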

\begin{proof}
It suffices to prove part a) because 
part b) follows from a) and \prettyref{cor:tensor.embed} b)
since $\mathcal{CV}^{k}_{0}(\Omega)$ is complete by \prettyref{prop:cv-complete}. 
Then part c) is a consequence of b). 
Let us turn to part a). Since $\mathcal{CV}^{k}_{0}(\Omega)$ is barrelled, 
$\mathcal{V}^{k}$ locally bounded and locally bounded away from zero, 
the space $\mathcal{C}_{c}^{\infty}(\Omega)\otimes E$ 
can be considered as a topological subspace of $\mathcal{CV}_{0}^{k}(\Omega)\otimes_{\varepsilon}E$ 
by \prettyref{cor:tensor.embed} a) and \prettyref{rem:compact.supp} when equipped with the induced topology.

Let $f\in\mathcal{CV}^{k}_{0}(\Omega,E)$, $\varepsilon>0$, $j\in J$, $l\in\langle k \rangle$ and 
$\alpha\in\widehat{\mathfrak{A}}$ where $(p_{\alpha})_{\alpha\in\widehat{\mathfrak{A}}}$ is the system of
seminorms describing the locally convex topology of the completion $\widehat{E}$ of $E$. 
In the following we consider functions with values in $E$ also as functions with values in $\widehat{E}$ 
and note that $\mathcal{CV}_{0}^{k}(\Omega,\widehat{E})$ is the completion of 
$\mathcal{CV}_{0}^{k}(\Omega,E)$ by \prettyref{prop:cv-complete}. 
Thus the topologies of $\mathcal{CV}^{k}_{0}(\Omega,E)$ and 
$\mathcal{CV}^{k}_{0}(\Omega,\widehat{E})$ coincide on $\mathcal{CV}^{k}_{0}(\Omega,E)$. 
The density of $\mathcal{C}^{k}_{c}(\Omega,E)$ in $\mathcal{CV}_{0}^{k}(\Omega,E)$ yields that 
there is $\widetilde{f}\in\mathcal{C}^{k}_{c}(\Omega,E)$ such that $|f-\widetilde{f}|_{j,l,\alpha}<\varepsilon/3$. 
Further, there is $N_{0}\in\mathbb{N}$ with
$|\widetilde{f}-\widetilde{f}\ast\rho_{n}|_{j,l,\alpha}<\varepsilon/3$ 
for all $n\geq N_{0}$ by \prettyref{lem:regularization} as $\widehat{E}$ is complete. Let $K_{1}:=\operatorname{supp}\widetilde{f}$ 
and choose an open neighbourhood $V$ of $K_{1}$ such that $V$ is relatively compact in $\Omega$ 
which is possible since $K_{1}$ is compact and $\Omega\subset\mathbb{R}^{d}$ open. Since 
$\mathcal{V}^{k}$ is locally bounded away from zero, there is $i\in J$ such that 
\[
C_{1}:=\sup_{x\in \overline{V}}\nu_{i,0}(x)^{-1}=\bigl(\inf_{x\in \overline{V}}\nu_{i,0}(x)\bigr)^{-1}<\infty.
\]
From the relative compactness of $V$ in $\Omega$ follows that there is $N_{1}\in\mathbb{N}$ such that 
\[
\overline{V}+\overline{\mathbb{B}_{1/n}(0)}\subset \Omega
\]
for all $n\geq N_{1}$. Choosing $N_{2}:=\max\{N_{0},N_{1}\}$ and defining the compact set 
$K_{2}:=\overline{V}+\overline{\mathbb{B}_{1/N_{2}}(0)}\subset\Omega$, we get that 
\[
C_{2}:=\sup_{x\in K_{2}}\nu_{j,l}(x)<\infty
\]
because $\mathcal{V}^{k}$ is locally bounded. Further, we estimate 
\[
C_{3}:=\sup_{\beta\in\mathbb{N}_{0}^{d},|\beta|\leq l}\int_{\mathbb{R}^{d}}\bigl|\partial^{\beta}\rho_{N_{2}}(y)\bigr|\d y
\leq (N_{2})^{l}\sup_{\beta\in\mathbb{N}_{0}^{d},|\beta|\leq l}\int_{\mathbb{R}^{d}}\bigl|\partial^{\beta}\rho(y)\bigr|\d y
<\infty.
\]
By virtue of \prettyref{prop:localization} b) there is 
$g=\sum_{m=1}^{q}g_{m}\otimes e_{m}\in\mathcal{C}^{0}_{c}(\Omega)\otimes E$ such that 
$\operatorname{supp}g\subset V$ and 
\[
|\widetilde{f}-g|_{i,0,\alpha}<\frac{\varepsilon}{3C_{1}C_{2}C_{3}}.
\]
By \prettyref{lem:convolution} we observe that $g\ast\rho_{N_{2}} \in\mathcal{C}^{\infty}_{c}(\Omega, E)$ 
with 
\[
\operatorname{supp}(g\ast \rho_{N_{2}})\subset\overline{V}+\overline{\mathbb{B}_{1/N_{2}}(0)}
=K_{2}\subset\Omega
\]
and 
\[
 g\ast\rho_{N_{2}}=\sum_{m=1}^{q} (g_{m}\ast\rho_{N_{2}})\otimes e_{m}\in\mathcal{C}^{\infty}_{c}(\Omega)\otimes E.
\]
Thus we have by \prettyref{lem:convolution}
\[
\operatorname{supp}(\widetilde{f}\ast \rho_{N_{2}})\subset \overline{V}+\overline{\mathbb{B}_{1/N_{2}}(0)}=K_{2}
\]
yielding
\[
\operatorname{supp}(\widetilde{f}\ast\rho_{N_{2}}-g\ast \rho_{N_{2}})\subset
\overline{V}+\overline{\mathbb{B}_{1/N_{2}}(0)}=K_{2}\subset\Omega
\]
and 
\begin{flalign*}
&\quad\;|\widetilde{f}\ast\rho_{N_{2}}-g\ast\rho_{N_{2}}|_{j,l,\alpha}\\
&\;\;\mathclap{\underset{\eqref{lem:convolution.1}}{=}}\;\;\sup_{\substack{x\in K_{2}\\\beta\in\mathbb{N}_{0}^{d},|\beta|\leq l}} 
p_{\alpha}\bigl((\widetilde{f}-g)\ast(\partial^{\beta}\rho_{N_{2}})(x)\bigr)\nu_{j,l}(x)\\
&=\sup_{\substack{x\in K_{2}\\\beta\in\mathbb{N}_{0}^{d},|\beta|\leq l}} 
p_{\alpha}\bigl(\int_{\mathbb{R}^{d}}(\partial^{\beta}\rho_{N_{2}})(x-y)
\bigl(\widetilde{f}_{\operatorname{ex}}(y)-g_{\operatorname{ex}}(y)\bigr)\d y\bigr)\nu_{j,l}(x)\\
&\leq \sup_{\substack{x\in K_{2}\\\beta\in\mathbb{N}_{0}^{d},|\beta|\leq l}}
\int_{\mathbb{R}^{d}}\bigl|(\partial^{\beta}\rho_{N_{2}})(x-y)\bigr|\d y
\sup_{\substack{z\in\operatorname{supp}(\widetilde{f})\\\;\;\cup\operatorname{supp}(g)}}
p_{\alpha}(\widetilde{f}(z)-g(z))\nu_{j,l}(x)\\
&= \sup_{\substack{x\in K_{2}\\\beta\in\mathbb{N}_{0}^{d},|\beta|\leq l}}
\int_{\mathbb{R}^{d}}\bigl|(\partial^{\beta}\rho_{N_{2}})(y)\bigr|\d y\,
\sup_{z\in\overline{V}}p_{\alpha}(\widetilde{f}(z)-g(z))\nu_{j,l}(x)\\
&\leq C_{3}\bigl(\sup_{x\in K_{2}}\nu_{j,l}(x)\bigr)
\bigl(\sup_{z\in\overline{V}}p_{\alpha}(\widetilde{f}(z)-g(z))\bigr)\\
&= C_{3}C_{2}\sup_{z\in\overline{V}}p_{\alpha}(\widetilde{f}(z)-g(z))\nu_{i,0}(z)\nu_{i,0}(z)^{-1}\\
&\leq C_{3}C_{2}C_{1}|\widetilde{f}-g|_{i,0,\alpha}<\frac{\varepsilon}{3}.
\end{flalign*}
Therefore we deduce
\[
|f-g\ast\rho_{N_{2}}|_{j,l,\alpha}
\leq|f-\widetilde{f}|_{j,l,\alpha}
+|\widetilde{f}-\widetilde{f}\ast\rho_{N_{2}}|_{j,l,\alpha}
+|\widetilde{f}\ast\rho_{N_{2}}-g\ast\rho_{N_{2}}|_{j,l,\alpha}
< \frac{\varepsilon}{3}+\frac{\varepsilon}{3}+\frac{\varepsilon}{3}=\varepsilon.
\]
Keeping in mind that $f\in\mathcal{CV}^{k}_{0}(\Omega,E)$ and 
$g\ast\rho_{N_{2}}\in\mathcal{C}^{\infty}_{c}(\Omega)\otimes E$, it follows
that $\mathcal{C}^{\infty}_{c}(\Omega)\otimes E$ is dense in $\mathcal{CV}^{k}_{0}(\Omega,E)$ with respect to the topology of 
$\mathcal{CV}^{k}_{0}(\Omega,\widehat{E})$. However, the latter space is just the completion of $\mathcal{CV}^{k}_{0}(\Omega,E)$ 
and thus the topologies of $\mathcal{CV}^{k}_{0}(\Omega,E)$ and 
$\mathcal{CV}^{k}_{0}(\Omega,\widehat{E})$ coincide on $\mathcal{CV}^{k}_{0}(\Omega,E)$. 
Hence we get that $\mathcal{C}^{\infty}_{c}(\Omega)\otimes E$ is dense in $\mathcal{CV}^{k}_{0}(\Omega,E)$. 
\end{proof}

$\mathcal{C}^{k}_{c}(\Omega,E)$ is dense in $\mathcal{CV}^{k}_{0}(\Omega,E)$ by \prettyref{lem:comp.supp.dense} 
if the latter space fufils the cut-off criterion and the family $\mathcal{V}^{k}$ is locally bounded. 
$\mathcal{CV}^{k}_{0}(\Omega)$ is a Fr\'{e}chet space and thus barrelled by \prettyref{prop:cv-complete} if the $J$ in 
$\mathcal{V}^{k}=(\nu_{j,l})_{j\in J,l\in\langle k \rangle}$ is countable. 
Let us complement what we said about the standard structure of a family of weights 
(see the remarks below \prettyref{def:weighted_diff_spaces})
by our additional conditions on the weights collected so far. 
The standard structure of a (countable) locally bounded family $\mathcal{V}^{k}$ which 
is bounded away from zero on a locally compact Hausdorff space $\Omega$ resp.\ 
on an open set $\Omega\subset\mathbb{R}^{d}$ is given by the following. 
Let $J:=\mathbb{N}$, $(\Omega_{j})_{j\in J}$ be a family of sets such that 
$\Omega_{j}\subset\Omega_{j+1}$ for all $j\in J$ with $\Omega=\bigcup_{j\in J}\Omega_{j}$ and 
\[
\forall\; K\subset\Omega\;\text{compact}\;\exists\;j\in J:\;K\subset\Omega_{j}.
\]
Let $\widetilde{\nu}_{j,l}\colon\Omega\to (0,\infty)$ be continuous for all $j\in J$, $l\in\langle k \rangle$
and increasing in $j\in J$ and in $l\in\langle k \rangle$ such that 
\begin{equation}\label{eq:standard.weight}
\nu_{j,l}(x)=\chi_{\Omega_{j}}(x)\widetilde{\nu}_{j,l}(x),\quad x\in \Omega,
\end{equation}
for every $j\in J$ and $l\in\langle k \rangle$ where $\chi_{\Omega_{j}}$ is the indicator function of $\Omega_{j}$. 
If $\Omega\neq \mathbb{R}^{d}$, then the cut-off criterion may add some restrictions 
on the structure of the sequence $(\Omega_{j})$, e.g.\ a positive distance from the boundary 
$\partial\Omega_{j}$ of $\Omega_{j}$ to the boundary of $\partial\Omega_{j+1}$ of $\Omega_{j+1}$ for all $j$.

\begin{exa}%\label{ex:weighted.spaces}
Let $E$ be an lcHs, $k\in\mathbb{N}_{\infty}$ and $\Omega\subset\mathbb{R}^{d}$ open. 
\prettyref{thm:weighted.diff} can be applied to the following spaces:
\begin{enumerate}
\item [a)] $\mathcal{C}^{k}(\Omega,E)$ with the topology of uniform convergence of 
all partial derivatives up to order $k$ on compact subsets of $\Omega$,
\item [b)] the Schwartz space $\mathcal{S}(\mathbb{R}^{d},E)$,
\item [c)] the space $\mathcal{O}_{M}(\mathbb{R}^{d},E)$ of multipliers of $\mathcal{S}(\mathbb{R}^{d})$,
\item [d)] let $\Omega_{j}:=\{x=(x_{1},x_{2})\in\mathbb{R}^{2}\;|\;1/(j+1)<|x_{2}|<j+1\}$ for all $j\in\mathbb{N}$ and
\[
 \qquad \qquad\mathcal{C}^{k}_{\operatorname{exp}}(\mathbb{R}^{2}\setminus\mathbb{R},E):=
 \bigl\{f\in\mathcal{C}^{k}(\mathbb{R}^{2}\setminus\mathbb{R},E)\;|\; 
 \forall\;j\in\mathbb{N},\,l\in\langle k \rangle,\, \alpha\in\mathfrak{A}:\;|f|_{j,l,\alpha}<\infty\bigr\} 
\]
 where
\[
 |f|_{j,l,\alpha}:=\sup_{\substack{(x_{1},x_{2})\in\Omega_{j}\\ \beta\in\mathbb{N}_{0}^{2},|\beta|\leq l}}
 p_{\alpha}\bigl((\partial^{\beta})^{E}f(x_{1},x_{2})\bigr)e^{-\frac{1}{j+1}|x_{1}|}.
 \]
\end{enumerate}
\end{exa}
\begin{proof}
\begin{enumerate}
\item [a)] From \prettyref{ex:standard_diff_spaces} a) we obtain $\mathcal{C}^{k}(\Omega,E)=\mathcal{CW}^{k}_{0}(\Omega,E)$ with
$\mathcal{W}^{k}:=\{\nu_{j,l}:=\chi_{\Omega_{j}}\;|\;j\in\mathbb{N},\,l\in\langle k \rangle\}$ 
where $(\Omega_{j})_{j\in\mathbb{N}}$ is a compact exhaustion of $\Omega$. 
The family of weights $\mathcal{W}^{k}$ is locally bounded and locally bounded away from zero. 
The Fr\'{e}chet space $\mathcal{C}^{k}(\Omega)$ is barrelled and the cut-off criterion is fulfilled 
by \prettyref{ex:cut-off}.
\item [b)] Due to \prettyref{ex:standard_diff_spaces} b) we have 
$\mathcal{S}(\mathbb{R}^{d},E)=\mathcal{CV}^{\infty}_{0}(\mathbb{R}^{d},E)$ with 
$\mathcal{V}^{\infty}:=\{\nu_{j,l}\;|\;j\in\mathbb{N},\,l\in\mathbb{N}_{0}\}$ where
$\nu_{j,l}(x):=(1+|x|^{2})^{l/2}$ for $x\in\mathbb{R}^{d}$. 
The family of weights is locally bounded and bounded away from zero, 
the Fr\'{e}chet space $\mathcal{S}(\mathbb{R}^{d})$ is barrelled and $\mathcal{S}(\mathbb{R}^{d},E)$ 
fulfils the cut-off criterion by \prettyref{ex:cut-off}.
\item [c)] The space of multipliers is defined by 
\[
\qquad\quad \mathcal{O}_{M}(\mathbb{R}^{d},E)
:=\{f\in \mathcal{C}^{\infty}(\mathbb{R}^{d},E)\;|\;\forall\;g\in\mathcal{S}(\mathbb{R}^{d}),\,
l\in\mathbb{N}_{0},\,\alpha\in \mathfrak{A}:\;\|f\|_{g,l,\alpha}<\infty\}
\]
where
\[
 \|f\|_{g,l,\alpha}:=\sup_{\substack{x\in\mathbb{R}^{d}\\
  \beta\in\mathbb{N}_{0}^{d},|\beta|\leq l}}
 p_{\alpha}\bigl((\partial^{\beta})^{E}f(x)\bigr)|g(x)|
\]
(see \cite[$3^{0}$), p.\ 97]{Schwartz1955}).
The space $\mathcal{O}_{M}(\mathbb{R}^{d})$ is barrelled by 
\cite[Chap.\ II, \S4, n$^\circ$4, Th\'{e}or\`{e}me 16, p.\ 131]{Gro}. 
Let $J:=\{j\subset\mathcal{S}(\mathbb{R}^{d})\;|\;j\;\text{finite}\}$ and 
define the family $\mathcal{V}^{\infty}$ of weights given by 
$\nu_{j,l}(x):=\max_{g\in j}|g(x)|$, $x\in\mathbb{R}^{d}$, for $j\in J$ and $l\in\mathbb{N}_{0}$. 
It is easily seen that the system of seminorms generated by 
\[
|f|_{j,l,\alpha}:=\sup_{\substack{x\in\mathbb{R}^{d}\\ \beta\in\mathbb{N}_{0}^{d}, |\beta|\leq l}}
 p_{\alpha}\bigl((\partial^{\beta})^{E}f(x)\bigr)\nu_{j,l}(x),\quad f\in \mathcal{O}_{M}(\mathbb{R}^{d},E),
\]
for $j\in J$, $l\in\mathbb{N}_{0}$ and $\alpha\in\mathfrak{A}$ induces the same topology on $\mathcal{O}_{M}(\mathbb{R}^{d},E)$. 
However, the family $\mathcal{V}^{\infty}$ is directed, locally bounded and bounded away from zero. 
Further, for every $\varepsilon>0$ there is $r>0$ such that $(1+|x|^{2})^{-1}<\varepsilon$ for all 
$x\notin\overline{\mathbb{B}_{r}(0)}=:K$ which implies for $j\in J$ and $l\in\mathbb{N}_{0}$ that 
\[
\nu_{j,l}(x)\leq\varepsilon \max_{g\in j}|g(x)(1+|x|^{2})|=\varepsilon\nu_{i,l}(x),\quad x\notin K,
\]
where $i:=\{g\cdot(1+|\cdot|^{2})\;|\;g\in j\}$ is a finite subset of $\mathcal{S}(\mathbb{R}^{d})$.
From \prettyref{rem:cv=cv_0} we conclude that $\mathcal{O}_{M}(\mathbb{R}^{d},E)
=\mathcal{CV}^{\infty}(\mathbb{R}^{d},E)=\mathcal{CV}^{\infty}_{0}(\mathbb{R}^{d},E)$.  
Due to \prettyref{rem:whole.cut-off} we note that $\mathcal{O}_{M}(\mathbb{R}^{d},E)$ satisifies the cut-off criterion.
\item [d)] The family $\mathcal{V}^{k}$ given by $\nu_{j,l}(x_{1},x_{2}):=\chi_{\Omega_{j}}(x_{1},x_{2})e^{-|x_{1}|/(j+1)}$, 
$(x_{1},x_{2})\in\mathbb{R}^{2}\setminus\mathbb{R}$, for $j\in\mathbb{N}$ and $l\in\langle k \rangle$ 
is locally bounded and bounded away from zero. For $j\in\mathbb{N}$ and $l\in\mathbb{N}_{0}$ we set $i:=2j+1$, $m:=l$, 
$\delta:=1/(2j+2)$ and for $0<\varepsilon<1$ we choose 
$K:=\{x=(x_{1},x_{2})\in\overline{\Omega_{j}}\;|\; |x_{1}|\leq -(\ln\varepsilon)(2j+2)\}$. 
This yields $\mathcal{C}^{k}_{\operatorname{exp}}(\mathbb{R}^{2}\setminus\mathbb{R},E)
=\mathcal{CV}^{k}(\mathbb{R}^{2}\setminus\mathbb{R},E)=\mathcal{CV}^{k}_{0}(\mathbb{R}^{2}\setminus\mathbb{R},E)$ 
by \prettyref{rem:cv=cv_0} and that the cut-off criterion is fulfilled. 
In addition, the Fr\'{e}chet space $\mathcal{C}^{k}_{\operatorname{exp}}(\mathbb{R}^{2}\setminus\mathbb{R})$ is barrelled.
\end{enumerate}
\end{proof}

Together with \prettyref{prop:localization} we get from example a) one of our starting points, 
namely \prettyref{thm:treves}, back. Example b) and c) are covered by 
\cite[Proposition 9, p.\ 108]{Schwartz1955} and \cite[Th\'{e}or\`{e}me 1, p.\ 111]{Schwartz1955}. 
The results b) and c) for the Schwartz space in example b) can also be found in 
\cite[Chap.\ II, \S3, n$^\circ$3, Exemples, p.\ 80-81]{Gro} 
with a different proof using the nuclearity of $\mathcal{S}(\mathbb{R}^{d})$. We complete this paper 
with a comparison of our conditions in \prettyref{thm:weighted.diff} with the ones stated 
by Schwartz in \cite{Schwartz1955} to get the same result for the spaces in example
a)-c) but only for $\Omega=\mathbb{R}^{d}$.

\begin{rem}
Schwartz treats the case $k>0$ and $\Omega=\mathbb{R}^{d}$ in \cite{Schwartz1955}. 
He assumes similar conditions $H_{1}$-$H_{4}$ for the space $\mathcal{H}^{k}(\mathbb{R}^{d}):=
\mathcal{H}^{k}(\mathbb{R}^{d},\mathbb{K})$ as we do (see \cite[p.\ 97-98]{Schwartz1955}). 
In $H_{1}$ the members of his family of weights $\Gamma$ 
are continuous and for every compact set $K\subset\mathbb{R}^{d}$ there is a weight in $\Gamma$ 
which is non-zero on $K$. $\mathcal{H}^{k}(\mathbb{R}^{d})$ is the space of functions $f\in\mathcal{C}^{k}(\mathbb{R}^{d})$ such that 
$\gamma \partial^{\beta}f$ is bounded on $\mathbb{R}^{d}$ for every $\gamma\in\Gamma$ and $|\beta|\leq k$. 
This yields to $\mathcal{C}^{k}_{c}(\mathbb{R}^{d})\subset\mathcal{H}^{k}(\mathbb{R}^{d})\subset
\mathcal{C}^{k}(\mathbb{R}^{d})$ algebraically. 
In $H_{2}$ he demands that $\mathcal{H}^{k}(\mathbb{R}^{d})$ is a locally convex Hausdorff space and that the inclusions 
$\mathcal{C}^{k}_{c}(\mathbb{R}^{d})\hookrightarrow\mathcal{H}^{k}(\mathbb{R}^{d})
\hookrightarrow\mathcal{C}^{k}(\mathbb{R}^{d})$ are continuous where $\mathcal{C}^{k}(\mathbb{R}^{d})$ 
has its usual topology and $\mathcal{C}^{k}_{c}(\mathbb{R}^{d})$ its inductive limit topology. 
In $H_{3}$ he supposes that a subset $B\subset\mathcal{H}^{k}(\mathbb{R}^{d})$ is bounded 
if and only if for every $\gamma\in\Gamma$ and $|\beta|\leq k$ the set 
$\{\gamma(x) \partial^{\beta}f(x)\;|\;x\in\mathbb{R}^{d},\, f\in B\}$ is bounded in $\mathbb{K}$. 
In $H_{4}$ he assumes that on every bounded subset of $\mathcal{H}^{k}(\mathbb{R}^{d})$ the topology of 
$\mathcal{H}^{k}(\mathbb{R}^{d})$ and the induced topology of $\mathcal{C}^{k}(\mathbb{R}^{d})$ coincide.

He defines the corresponding $E$-valued version $\mathcal{H}^{k}(\mathbb{R}^{d},E)$ of the space
$\mathcal{H}^{k}(\mathbb{R}^{d})$ for $\mathcal{H}^{k}=\mathcal{C}^{k}_{c}$, $\mathcal{C}^{k}$, 
$\mathcal{S}$ and $\mathcal{O}_{M}$ and shows that the statements of \prettyref{thm:weighted.diff} hold for 
all of them but $\mathcal{H}^{k}=\mathcal{C}^{k}_{c}$ (see \cite[p.\ 94-97]{Schwartz1955}, 
\cite[Proposition 9, p.\ 108]{Schwartz1955} and \cite[Th\'{e}or\`{e}me 1, p.\ 111]{Schwartz1955}).

In comparison, our conditions of local boundedness of $\mathcal{V}^{k}$ and being locally bounded away from zero on
$\Omega=\mathbb{R}^{d}$ imply $H_{1}$ and $H_{2}$ if the members of $\mathcal{V}^{k}$ are continuous. 
The assumption that the members of $\mathcal{V}^{k}$ are continuous is not a big difference 
if the members of the family $\mathcal{V}^{k}$ have a structure like in \eqref{eq:standard.weight}.
Then one may replace the indicator functions $\chi_{\Omega_{j}}$ by a smoothed version, 
e.g.\ by convolution of the indicator function with a suitable molifier, and then one gets a family of continuous 
weights which generates the same topology.
The condition $H_{3}$ is clearly fulfilled for the spaces $\mathcal{CV}^{k}(\mathbb{R}^{d})$ and 
the topology on them is called `topologie naturelle' by Schwartz (see \cite[p.\ 98]{Schwartz1955}). 
The condition $H_{4}$ implies that $\mathcal{C}^{k}_{c}(\mathbb{R}^{d},E)$ 
is dense in $\mathcal{H}^{k}(\mathbb{R}^{d},E)$ for $\mathcal{H}^{k}=\mathcal{C}^{k}$, 
$\mathcal{S}$ and $\mathcal{O}_{M}$ and quasi-complete $E$ 
(see \cite[p.\ 106]{Schwartz1955} and \cite[Th\'{e}or\`{e}me 1, p.\ 111]{Schwartz1955}). The same 
follows in our case from local boundedness and the cut-off criterion.
\end{rem}
\bibliography{biblio}

\begin{thebibliography}{10}

\bibitem{B3}
K.-D. Bierstedt.
\newblock {\em {Gewichtete R\"{a}ume stetiger vektorwertiger Funktionen und das
  injektive Tensorprodukt}}.
\newblock PhD thesis, Johannes-Gutenberg Universit\"{a}t Mainz, Mainz, 1971.

\bibitem{B1}
K.-D. Bierstedt.
\newblock {Gewichtete R\"{a}ume stetiger vektorwertiger Funktionen und das
  injektive Tensorprodukt. I}.
\newblock {\em J. Reine Angew. Math.}, 259:186--210, 1973.

\bibitem{B2}
K.-D. Bierstedt.
\newblock {Gewichtete R\"{a}ume stetiger vektorwertiger Funktionen und das
  injektive Tensorprodukt. II}.
\newblock {\em J. Reine Angew. Math.}, 260:133--146, 1973.

\bibitem{bourbakiII}
N.~Bourbaki.
\newblock {\em General Topology, Part 2}.
\newblock Elem. Math. Addison-Wesley, Reading, 1966.

\bibitem{buchwalter}
H.~{Buchwalter}.
\newblock {Topologies et compactologies.}
\newblock {\em {Publ. D\'ep. Math., Lyon}}, 6(2):1--74, 1969.

\bibitem{Defant}
A.~Defant and K.~Floret.
\newblock {\em Tensor norms and operator ideals}.
\newblock Math. Stud. 176. North-Holland, Amsterdam, 1993.

\bibitem{elstrodt2005}
J.~Elstrodt.
\newblock {\em Ma\ss- und Integrationstheorie}.
\newblock Grundwissen Mathematik. Springer, Berlin, 7th edition, 2011.

\bibitem{engelking}
R.~Engelking.
\newblock {\em General topology}.
\newblock Sigma Series Pure Math. 6. Heldermann, Berlin, 1989.

\bibitem{fabian}
M.~Fabian, P.~Habala, P.~H{\'a}jek, V.~Montesinos, and V.~Zizler.
\newblock {\em Banach Space Theory: The Basis for Linear and Nonlinear
  Analysis}.
\newblock CMS Books Math. Springer, New York, 2011.

\bibitem{F/W/Buch}
K.~Floret and J.~Wloka.
\newblock {\em Einf\"{u}hrung in die Theorie der lokalkonvexen R\"{a}ume}.
\newblock Lecture Notes in Math. 56. Springer, Berlin, 1968.

\bibitem{Gro}
A.~Grothendieck.
\newblock {\em Produits tensoriels topologiques et espaces nucl\'{e}aires}.
\newblock Mem. Amer. Math. Soc. 16. AMS, Providence, 4th edition, 1966.

\bibitem{H1}
L.~H\"{o}rmander.
\newblock {\em The Analysis of linear partial differential operators I}.
\newblock Classics Math. Springer, Berlin, 2nd edition, 1990.

\bibitem{james}
I.~M. James.
\newblock {\em Topologies and Uniformities}.
\newblock Springer Undergr. Math. Ser. Springer, London, 1999.

\bibitem{Jarchow}
H.~Jarchow.
\newblock {\em Locally Convex Spaces}.
\newblock Math. Leitf\"{a}den. Teubner, Stuttgart, 1981.

\bibitem{Kaballo}
W.~Kaballo.
\newblock {\em Aufbaukurs Funktionalanalysis und Operatortheorie}.
\newblock Springer, Berlin, 2014.

\bibitem{kriegl}
A.~Kriegl and P.~W. Michor.
\newblock {\em The Convenient Setting of Global Analysis}.
\newblock Math. Surveys Monogr. 53. AMS, Providence, 1997.

\bibitem{kruse2017}
K.~Kruse.
\newblock {Weighted vector-valued functions and the $\varepsilon$-product},
  2017.
\newblock arxiv preprint \url{https://arxiv.org/abs/1712.01613v6}.

\bibitem{meisevogt1997}
R.~Meise and D.~Vogt.
\newblock {\em Introduction to Functional Analysis}.
\newblock Clarendon Press, Oxford, 1997.

\bibitem{musial1987}
K.~Musia\l.
\newblock {Vitali and Lebesgue convergence theorems for Pettis integral in
  locally convex spaces}.
\newblock {\em Atti Sem. Mat. Fis. Univ. Modena}, 35:159--166, 1987.

\bibitem{nachbin1965}
L.~Nachbin.
\newblock {\em Elements of approximation theory}.
\newblock Notas de Mathem\'{a}tica 33. Instituto de Mathem\'{a}tica Pura e
  Aplicada, Rio de Janeiro, 1965.

\bibitem{Schwartz1955}
L.~Schwartz.
\newblock {Espaces de fonctions diff\'{e}rentiables \`{a} valeurs
  vectorielles}.
\newblock {\em J. Analyse Math.}, 4:88--148, 1955.

\bibitem{Sch1}
L.~Schwartz.
\newblock {Th\'{e}orie des distributions \`{a} valeurs vectorielles. I}.
\newblock {\em Ann. Inst. Fourier (Grenoble)}, 7:1--142, 1957.

\bibitem{Treves}
F.~Tr\`{e}ves.
\newblock {\em Topological Vector Spaces, Distributions and Kernels}.
\newblock Dover, New York, 2006.

\bibitem{Wilansky}
A.~Wilanksy.
\newblock {\em Modern Methods in Topological Vector Spaces}.
\newblock McGraw-Hill, New York, 1978.

\end{thebibliography}
\bibliographystyle{plain}
\end{document}